\documentclass{article}
\usepackage[utf8]{inputenc}
\usepackage{authblk}
\usepackage{hyperref}
\usepackage{enumitem}

\usepackage[a4paper, total={6in, 9in}]{geometry}

\usepackage{array}
\usepackage{amsthm}
\usepackage{amsmath}
\usepackage{amssymb}
\usepackage{amsfonts}
\usepackage[T1]{fontenc}
\usepackage{xfrac}
\usepackage[english]{babel}
\usepackage{mathtools}

\theoremstyle{plain}
\newtheorem{theorem}{Theorem}[section]
\newtheorem{lemma}[theorem]{Lemma}
\newtheorem{proposition}[theorem]{Proposition}
\newtheorem{corollary}[theorem]{Corollary}

\theoremstyle{definition}
\newtheorem{definition}[theorem]{Definition}

\theoremstyle{remark}
\newtheorem{example}[theorem]{Example}
\newtheorem{remark}[theorem]{Remark}

\newcommand{\Z}{\mathbb{Z}}
\newcommand{\Q}{\mathbb{Q}}
\newcommand{\R}{\mathbb{R}}
\newcommand{\C}{\mathbb{C}}
\newcommand{\F}{\mathbb{F}}

\newcommand{\Zp}{\mathbb{Z}_p}
\newcommand{\Qp}{\mathbb{Q}_p}
\newcommand{\Cp}{\mathbb{C}_p}
\newcommand{\quot}[2]{\sfrac{#1}{#2}}
\DeclareMathOperator{\Fix}{Fix}
\DeclareMathOperator{\Cay}{Cay}
\DeclareMathOperator{\Stab}{Stab}
\DeclareMathOperator{\tr}{tr}
\DeclareMathOperator{\Gal}{Gal}

\DeclareMathOperator{\Mat}{Mat}
\DeclareMathOperator{\Ind}{Ind}
\DeclareMathOperator{\Tame}{Tame}
\DeclareMathOperator{\ord}{ord}
\DeclareMathOperator{\orb}{orb}

\title{The Algebraicity Problem for Hard-Core Entropy Constants on the Discrete Hypertori}
\author{Yotam Svoray}
\date{ }

\AtEndDocument{\bigskip{\footnotesize%
  \textsc{Department of Mathematics, Weizmann Institute of Science, Israel} \par
   \textit{E-mail address}: \texttt{yotam.svoray@huji.mail.ac.il}
}}

\begin{document}

\maketitle

\begin{abstract}
 We use tools and techniques from $p$-adic analysis and algebraic number theory to study the algebraicity of the hard square entropy constant and its high dimensional analogues. Specifically, we study arithmetic properties of $a_d(n)$, the number of independent sets in the $d$-dimensional discrete torus, and the associated entropy constants $\kappa_d=\lim_{n\to\infty}a_d(n)^{1/n^d}$. It is not known whether $\kappa_d$ is algebraic or transcendental for $d>1$. Using the fact that the sequence $a_d(p^k)$ converges $p$-adically for every prime $p$ and other arithmetic facts, we present a collection of criteria for the algebraicity of $\kappa_d$ and bound the number of possible values of prime powers $p^k$ for which $a_d(p^k)=\kappa_d^{p^{kd}}$. 
\end{abstract}

\tableofcontents

\section{Introduction}

The hard-core model is one of the basic lattice models of statistical
mechanics and symbolic dynamics. At activity $1$, its finite-volume partition
function is simply the number of independent sets in the underlying graph.
For integers $d,n\geq1$, let $T_d(n)$ denote the $d$-dimensional discrete
torus of side length $n$, and let $a_d(n)$ be its number of independent
sets\footnote{An independent set in a graph is a set of vertices, no two of which are connected by an edge.}. The associated entropy constant is $\kappa_d=\lim_{n\to\infty}a_d(n)^{1/n^d}$. The $\log\kappa_d$ is the topological entropy of the
$d$-dimensional hard-core shift.\\

The purpose of this paper is to investigate arithmetic structures hidden in
the finite-volume counts $a_d(n)$ and to determine what these structures
imply about the algebraicity of $\kappa_d$. The sequence $a_d(n)$ is governed
simultaneously by three kinds of structure: The first is the asymptotic
structure of the finite-volume counts, which produces the "archimedean" limit $a_d(n)^{\frac{1}{n^d}} \to \kappa_d$, the second
is the algebraic structure of transfer matrices, which expresses
finite-volume counts through traces and algebraic eigenvalues, and the third
is the arithmetic structure arising from translation actions on periodic
configurations, which produces strong congruences at prime powers. The
central theme of the paper is to compare these archimedean, algebraic, and
$p$-adic structures on $a_d(n)$ and to use all three to conclude information on $\kappa_d$.\\

The hard-core model is a classical lattice-gas model in statistical mechanics,
with a history going back to the study of residual entropy, including
Pauling's work on ice \cite{Pauling1935,BaxterBook1982}. It subsequently
became an important model in probability and theoretical computer science,
where questions concerning Gibbs measures on independent sets and uniqueness
have played a central role \cite{Weitz2006,Sly2010}. Related hard-constraint
models have also provided important exactly solvable examples
\cite{BrightwellWinkler2002}. 
The hard-core model also has a natural connection to percolation theory through
the study of Gibbs-measure uniqueness and phase transitions. In particular,
disagreement percolation relates the propagation of boundary conditions in a
Gibbs measure to the percolation of disagreement clusters
\cite{vanDenBergMaes1994,vanDenBergSteif1994, HuChen1991, LiuEvans2000}. In fact, Deninger in~\cite{deninger2010p} studied such entropy constants using $p$-adic tools (but from a different perspective to ours) and developed the notion of $p$-adic entropy. For the hard-core model on
bipartite graphs, this connection is especially direct: phase coexistence can
be characterized in terms of percolation of disagreements, and the critical
activity can be bounded in terms of the corresponding site-percolation
threshold \cite{vanDenBergSteif1994,GeorgiiHaggstromMaes2001}. \\

In dimension $d=1$, the model is completely solvable. The graph $T_1(n)$ is
the cycle graph, and so $a_1(n)$ is the $n$-th Lucas number, giving us $\kappa_1=\frac{1+\sqrt5}{2}$ (see Example~\ref{ex:lucas} for more details). From the transfer-matrix perspective, the
reason for this algebraicity is particularly transparent: a single finite
matrix governs the sequence $a_1(n)$ as $n$ varies, and $\kappa_1$ is its
Perron eigenvalue. More generally, the entropy of a one-dimensional shift of
finite type is governed by a fixed finite transfer matrix and is the
logarithm of a weak Perron number; in the mixing case, the corresponding
number is a Perron number \cite{Lind1984}.\\

The first genuinely nontrivial higher-dimensional case is $d=2$, the
classical hard-square lattice gas. The constant $\kappa_2$ has been studied
extensively using corner-transfer-matrix methods, rigorous bounds, numerical
computations, and strip approximations
\cite{baxter1980hard,CalkinWilf1998,baxter1999planar,Pavlov2012}.
Nevertheless, no exact expression for $\kappa_2$ is known, and its
algebraicity or transcendence remains open
\cite{liang2025independent}. The corresponding question for the topological
entropy $\log\kappa_2$ has likewise been recorded as open
\cite{ban2021topological}. This stands in contrast with exactly solvable
models such as the hard-hexagon model, for which Baxter's solution gives an
algebraic entropy constant \cite{Baxter1980}.\\

The transfer-matrix formulation also explains why the one-dimensional
argument does not immediately extend. For $d=1$, a fixed finite matrix
controls the entire sequence $a_1(n)$, whereas for $d\geq2$ the relevant
transfer matrix grows with the transverse system size. Thus the mechanism
that makes $\kappa_1$ algebraic has no direct higher-dimensional analogue.
This is consistent with the general theory of multidimensional shifts of
finite type: Hochman and Meyerovitch proved that the entropies of
$\mathbb Z^d$ shifts of finite type, for $d\geq2$, are precisely the
nonnegative right recursively enumerable numbers \cite{HM2010}. In
particular, the general theory does not determine the arithmetic nature of
any particular $\kappa_d$.\\

Our approach instead exploits arithmetic information contained in the
finite-volume counts themselves. Gauss-Dold congruences arise naturally in
the study of periodic-point counts and traces of powers of integer matrices;
in particular, traces of powers of an integer matrix satisfy the relevant
congruences \cite{Arnold2006,Zarelua2008}. In the hard-core setting, however,
the transfer matrix changes with the system size, so the usual fixed-matrix
argument does not directly apply. We obtain analogous congruences instead
from the translation action on independent sets of the torus. This produces
strong prime-power divisibility properties of $a_d(n)$ and provides the
starting point for our arithmetic investigation of $\kappa_d$.\\

Our first results establish the basic asymptotic framework in which the
arithmetic questions are posed. We prove effective convergence estimates for
the finite-volume entropy and show that $\{\kappa_d\}_{d=1}^\infty$ is decreasing
and converges to $\sqrt2$ as $d\to\infty$ with a convergence rate of $O(\frac{1}{d})$ (with explicit bounds); see
Theorems~\ref{thm:existence} and~\ref{thm:decreasing}. The transfer-matrix
construction gives, for each finite transverse size, an algebraic spectral
radius whose normalized powers converge to $\kappa_d$
(Proposition~\ref{prop:Perron_Frob}). These results provide the
archimedean and algebraic approximations to $\kappa_d$ that is compared
with the arithmetic information below.\\

The main combinatorial result that allows us to turn this combinatorial problem into an algebraic one is a prime power congruence. Namely, for every $d\geq1$, every prime $p$, and every $k\geq2$, we have that $a_d(p^k)\equiv a_d(p^{k-1})\pmod{p^k}$. 
Equivalently, $\nu_p\!\left(a_d(p^k)-a_d(p^{k-1})\right)\geq k$. This is Theorem~\ref{prop:Gauss}. Its proof is purely combinatorial, as it
comes from the translation action on independent sets of $T_d(p^k)$ and
does not use the transfer-matrix formalism. In particular, the congruence
gives, for every prime $p$, a canonical $p$-adic limit $\kappa_d^{(p)}\in1+p\mathbb Z_p$ that satisfies $a_d(p^k)\equiv\kappa_d^{(p)}\pmod{p^k}$. Thus the finite-volume sequence gives rise to two a priori different
limits: the real entropy constant $\kappa_d$ and, for every prime $p$, a
$p$-adic companion $\kappa_d^{(p)}$.\\

The congruence also leads to a useful notion of a tame prime, for
which the first nontrivial $p$-adic information is sufficiently rigid to
control possible coincidences with $\kappa_d$. We show that $2$ and $3$ are
tame for every $d$ (Theorem~\ref{thm:p2}), while $a_d(5)\equiv1+5(2^{d-1}+4^{d-1})\pmod{25}$, so that $5$ is tame whenever $d\not\equiv3\pmod4$
(Proposition~\ref{thm:p5}).\\

% The main arithmetic consequences arise from comparing these congruences with
% the hypothetical algebraicity of $\kappa_d$. We first obtain an
% approximation theorem (Theorem~\ref{thm:master}) which shows that, if
% $\kappa_d$ is algebraic, then the finite-volume quantities
% $a_d(n)^{1/n^d}$ cannot approach $\kappa_d$ arbitrarily rapidly, and that the algebraic degree of $a_d(n)^{1/n^d}$ must be unbounded, unless an
% exact coincidence $a_d(n)=\kappa_d^{n^d}$ occurs. \\

Our second main ingredient is an approximation theorem. Assuming that
$\kappa_d$ is algebraic, Theorem~\ref{thm:master} gives a quantitative
restriction on how rapidly the finite-volume approximations $a_d(n)^{1/n^d}$ can converge to $\kappa_d$ and a quantitative lower bound on the algebraic degree of $a_d(n)^{1/n^d}$. More precisely, the theorem shows that an
algebraic $\kappa_d$ cannot be approximated by these quantities beyond the
rate permitted by its degree and arithmetic complexity, unless the
approximation is exact. Thus, if $a_d(n)\neq \kappa_d^{n^d}$ for every $n \geq 1$, then sufficiently rapid convergence of $a_d(n)^{1/n^d}$ to $\kappa_d$
forces $\kappa_d$ to be transcendental. This provides the bridge between
the asymptotic behavior of the finite-volume model and the arithmetic
question of the nature of $\kappa_d$.\\

This motivates the study of the exceptional set $E_d=\{n\geq1:a_d(n)=\kappa_d^{n^d}\}$. Here the $p$-adic congruences impose surprisingly strong global restrictions.
If $\kappa_d$ is algebraic and $E_d\neq\emptyset$, then
Theorem~\ref{thm:coprimality} shows that $\gcd(E_d)>1$, and thus all exceptional indices have a common prime divisor. Moreover, if $p$
is tame and $m=\nu_p(a_d(p)-1)$, then
Theorem~\ref{thm:finiteexp} shows that only finitely many powers of $p$ can
belong to $E_d$: at most $m$ for odd $p$, and at most one when $p=2$. Prime-power coincidences also force strong restrictions on the algebraic
degree of $\kappa_d$. If $\kappa_d$ is algebraic and $p^k\in E_d$, then
Theorem~\ref{thm:radical-degree} implies that $[\mathbb Q(\kappa_d):\mathbb Q]$ is a power of $p$, and for tame odd primes $p$ one obtains the more precise
restriction, which is that $[\mathbb Q(\kappa_d):\mathbb Q] = p^r$ where $r$ satisfies $r\leq kd\leq r+\nu_p(a_d(p)-1)-1$. Consequently, if the degree of $\kappa_d$ is not a prime power, then no
prime power can belong to $E_d$.\\

Finally, we obtain a local rigidity statement connecting the hypothetical
algebraic number $\kappa_d$ directly to its $p$-adic companion $\kappa_d^{(p)}$. If
$\kappa_d$ is algebraic and $p^k\in E_d$, then
Theorem~\ref{thm:localrigidity} implies that for every prime
$\mathfrak p$ of $\mathbb Q(\kappa_d)$ lying above $p$, we have that $\kappa_d\equiv1\pmod{\mathfrak p}$. Thus an exact finite-volume coincidence forces $\kappa_d$ to satisfy a
simultaneous local condition at every place above the witnessing prime.
As an application, we obtain explicit bounds on a witnessing prime in
special algebraic families, including Pisot and Salem values
(Corollary~\ref{thm:normbound}).\\

Taken together, these results give a hierarchy of obstructions to the
algebraicity of $\kappa_d$. The prime-power congruence produces canonical
$p$-adic limits; the approximation theorem constrains how an algebraic
$\kappa_d$ can be approached by finite-volume quantities; and any exact
coincidence is subject to global, degree-theoretic, and local restrictions.
We do not determine the algebraic or transcendental nature of $\kappa_d$ for
$d\geq2$. Rather, our results show that any hypothetical algebraic
$\kappa_d$ would have to reconcile these archimedean, $p$-adic, and
algebraic-number-theoretic constraints simultaneously. We summarize the main algebraicity results of $\kappa_d$:

\begin{theorem}
Let $d\ge2$ and suppose $\kappa_d$ is algebraic. Then: 

\begin{itemize}
    \item The sequence $\{-\frac{1}{n^d} \log |a_d(n)-\kappa_d^{n^d}|\}_{n=1}^\infty$ is bounded.
    \item $\deg(a_d(n)^{\frac{1}{n^d}}) \to \infty$. 
    \item $\kappa_d$ is not a quadratic unit. 
    \item For every $p \in \Tame_d$ we have that $\kappa_d^{(p)}\ne\kappa_d^{p^{kd}}$ for every large $k$ and $\kappa_d^{(p)}$ is never a $\mathfrak p$-adic limit point of the sequence $\big\{\kappa_d^{p^{kd}}\big\}_{k=1}^\infty$ for every $\mathfrak{p} \mid p$.
\end{itemize}

If, in addition, there exists some prime $p$ such that $p^k\in E_d$ for some $k$, then
\begin{itemize}
\item The intersection $\{p^j \colon j \in\mathbb{N}\} \cap E_d$ is finite if $p \in \Tame_d$. 
\item $p\le|N_{K/\Q}(\kappa_d-1)|$.
\item $[\Q(\kappa_d):\Q]$ is a power of $p$.
\item if $f$ is the minimal polynomial of $\kappa_d$, then $f(x)\equiv(x-1)^{\deg f}\pmod p$.
\item $\kappa_d$ is $\mathfrak p$-integral at every prime $\mathfrak p$ of $K$ above $p$ with $\kappa_d\equiv1\pmod{\mathfrak p}$. 
\item For $p=2$ or $p=3$, we have that $[\Q(\kappa_d):\Q]=p^{kd}$. 
\end{itemize}
\end{theorem}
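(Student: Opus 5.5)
This summary theorem is assembled from results proved later in the paper, so the plan is to derive each bullet from the corresponding later statement, adding short arguments where a bullet is phrased differently. Throughout, $\kappa_d$ is algebraic, $K=\Q(\kappa_d)$, and $D=[K:\Q]$.

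\emph{Unconditional bullets.} The first bullet should come from Theorem~\ref{thm:master} via a Liouville-type argument. Suppose $a_d(n)\ne\kappa_d^{n^d}$. Then $a_d(n)-\kappa_d^{n^d}$ is a nonzero algebraic number in $K$. Its conjugates are bounded by $\max(1,\overline{\kappa_d})^{n^d}$ up to $a_d(n)\le 2^{n^d}$, and its denominator is controlled by the leading coefficient of the minimal polynomial raised to $n^d$. Hence the product formula gives $|a_d(n)-\kappa_d^{n^d}|\ge C^{-n^d}$ for a constant $C$ depending only on $\kappa_d$. When $a_d(n)=\kappa_d^{n^d}$ the term must be handled by convention, or by Theorem~\ref{thm:coprimality} and the finiteness below. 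The second bullet is the degree statement of Theorem~\ref{thm:master}. If $\deg a_d(n)^{1/n^d}$ stayed bounded along a subsequence, then Liouville applied to two algebraic numbers of bounded degree and height about $a_d(n)^{1/n^d}$ contradicts the convergence rate from Theorem~\ref{thm:existence}.

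\emph{The quadratic unit and $p$-adic limit bullets.} For ``not a quadratic unit'', suppose $\kappa_d$ were a quadratic unit with conjugate $\kappa_d'=\pm\kappa_d^{-1}$. Then $\kappa_d^{m}+\kappa_d'^{m}\in\Z$. I would compare this with $a_d(n)$ through the tame prime $p=2$. The congruence $a_d(2^k)\equiv\kappa_d^{(2)}\pmod{2^k}$ from Theorem~\ref{prop:Gauss} and Theorem~\ref{thm:p2}, together with Lucas-type congruences for $\mathrm{tr}\,\kappa_d^{m}$, should force $\kappa_d$ to coincide with $\kappa_1$-like values. Those values are excluded by Theorem~\ref{thm:decreasing}, since $\sqrt2<\kappa_d<\kappa_1$ and no quadratic unit lies strictly in that window with the required norm. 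This step is the least clearly covered by earlier results, and I expect it to be the main obstacle: I would first check whether the quadratic-unit case is a stated corollary of Theorem~\ref{thm:localrigidity}. For the fourth bullet, tameness says $\nu_p(a_d(p)-1)=m$ is finite and small. Meanwhile $\kappa_d^{p^{kd}}\to$ a root of unity times a $1$-unit $\mathfrak p$-adically. Equality with $\kappa_d^{(p)}$ for infinitely many $k$, or as a limit, would force $\nu_{\mathfrak p}(\kappa_d^{(p)}-1)=\infty$, contradicting $\kappa_d^{(p)}\equiv a_d(p)\not\equiv1\pmod{p^{m+1}}$.

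\emph{Conditional bullets ($p^k\in E_d$).} Finiteness for tame $p$ is Theorem~\ref{thm:finiteexp}. The degree being a power of $p$ is Theorem~\ref{thm:radical-degree}: $\kappa_d$ is a root of $x^{p^{kd}}-a_d(p^k)$, and Capelli-type irreducibility arguments apply. The congruence $\kappa_d\equiv1\pmod{\mathfrak p}$ at every $\mathfrak p\mid p$ is Theorem~\ref{thm:localrigidity}. It comes from $\kappa_d^{p^{kd}}=a_d(p^k)\equiv a_d(1)\cdots\equiv1\pmod p$, together with the fact that the Frobenius power map is injective on residue fields. From this, every conjugate reduces to $1$ modulo primes above $p$, so $f(x)\equiv(x-1)^{\deg f}\pmod p$. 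Then $p$ divides $N_{K/\Q}(\kappa_d-1)=\pm f(1)$, which is nonzero since $\kappa_d\neq1$; hence $p\le|N_{K/\Q}(\kappa_d-1)|$, as in Corollary~\ref{thm:normbound}. Finally, for $p=2,3$, tameness gives $m=\nu_p(a_d(p)-1)=1$. Plugging this into $r\le kd\le r+m-1$ from Theorem~\ref{thm:radical-degree} yields $r=kd$, i.e.\ $D=p^{kd}$. For $p=2$, I would use the analogous bound from Theorem~\ref{thm:p2}.
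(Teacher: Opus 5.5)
\textbf{Gap: the quadratic-unit bullet.} Most of your plan assembles the theorem the same way the paper does. The first two bullets come from Theorem~\ref{thm:master}, the $p$-adic bullet from Theorem~\ref{thm:comparison}, and the conditional bullets from Proposition~\ref{thm:finiteexp}, Theorem~\ref{thm:radical-degree}, Theorem~\ref{thm:localrigidity} and Corollary~\ref{cor:modprigid}. Your $f(1)$ derivation of $p\mid N_{K/\Q}(\kappa_d-1)$ is a fine substitute for the paper's ideal-norm argument.

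The route you propose for this bullet compares $a_d(2^k)$, or $\kappa_d^{(2)}$, with Lucas-type congruences for $\kappa_d^m+\kappa_d'^m$. There is no mechanism behind it. Nothing relates the $p$-adic companion $\kappa_d^{(p)}$ to traces of powers of $\kappa_d$; the remark after Example~\ref{ex:A1alg} shows $\kappa_1^{(11)}$ is not even an embedded image of $\kappa_1$. Moreover, for a unit $\kappa_d$ Proposition~\ref{cor:quadunit} gives $E_d=\emptyset$, so there is no coincidence $a_d(n)=\kappa_d^{n^d}$ through which congruences could pass.

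Your closing sentence about the window is in fact the whole proof, with no $2$-adic input. Write the minimal polynomial as $x^2-bx+c$ with $c=N_{K/\Q}(\kappa_d)=\pm1$, so $b=\kappa_d+c\,\kappa_d^{-1}\in\Z$. Since $\kappa_d\in[\sqrt2,\sqrt3]$ by Lemma~\ref{lem:naive_bound}:
\begin{itemize}
\item $c=1$ puts $b$ in $[3/\sqrt2,4/\sqrt3]\subset(2,3)$, which is impossible.
\item $c=-1$ puts $b$ in $[1/\sqrt2,2/\sqrt3]$, forcing $b=1$ and $\kappa_d=\frac{1+\sqrt5}{2}\approx1.618$.
\end{itemize}
The second case contradicts the Kahn--Zhao bound in Theorem~\ref{thm:decreasing}: for $d\ge2$, $\kappa_d\le(2^{2d+1}-1)^{1/4d}<2^{5/8}<1.55$. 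The strict inequality $\kappa_d<\kappa_1$ you quote comes from this bound, not from the monotonicity in part 1. This is exactly the paper's Proposition~\ref{cor:quadunit}.

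\textbf{Smaller repairs.}
\begin{itemize}
\item \emph{Last bullet, $p=2$.} The inequality $r\le kd\le r+m-1$ is proved only for odd tame primes. It rests on $\nu_p(B^{p^s}-1)=\nu_p(B-1)+s$, which can fail at $p=2$. What you need is that $a_d(2^k)$ is not a square. This holds because tameness of $2$ with $m=1$ gives $a_d(2^k)\equiv3\pmod4$. Then Lemma~\ref{lem:radical-degree} with $s=0$ gives $D=2^{kd}$, as in Corollary~\ref{cor:degree-23}.
\item \emph{Local rigidity.} Your chain of congruences must end at $a_d(p)\equiv1\pmod p$ (Theorem~\ref{prop:Gauss}). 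It cannot end at $a_d(1)=2$, which is never $\equiv1\pmod p$ (Remark~\ref{rem:k1exception}).
\item \emph{The $p$-adic bullet.} The congruence $\kappa_d^{(p)}\equiv a_d(p)$ holds only modulo $p$. The fact you need, $\nu_p(\kappa_d^{(p)}-1)=m$, is Proposition~\ref{prop:kappa_p_exists}. You must also treat $v_{\mathfrak p}(\kappa_d)\neq0$, where $\kappa_d^{p^{kd}}$ tends to $0$ or diverges (Lemma~\ref{lem:dichotomy}).
\item \emph{The first two bullets.} Neither Theorem~\ref{thm:coprimality} nor Proposition~\ref{thm:finiteexp} shows $E_d$ is finite, so they cannot dispose of $n\in E_d$. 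As in Theorem~\ref{thm:master}, these bullets hold only along $n\notin E_d$. For $n\in E_d$ the first quantity is infinite and $\deg\gamma_n=D$.
\end{itemize}
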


\medskip
\noindent\textbf{Organization.} In Section~\ref{sec:kappa} we construct $\kappa_d$ and the transfer-matrix formalism. In Section~\ref{sec:tame} we prove the congruence of $a_d(p^k)$ and introduce the notion of tame primes, including the circulant-graph computation at $p=2,3,5$, and a discussion on the $d=2$ case. In Section~\ref{sec:padic} we construct $\kappa_d^{(p)}$ and we prove our approximation theorem of $\kappa_d$, together with a genuinely local comparison theorem at witnessing primes. In Section~\ref{sec:Ed} we study the set $E_d$ by looking at global rigidity, finiteness along a fixed prime, and explicit prime bounds for quadratic units, Pisot, and Salem values of $\kappa_d$.

\medskip
\noindent\textbf{Acknowledgments.} We thank the Department of Mathematics at the Weizmann Institute of Science (and especially Francesco M. Saettone) for their hospitality while working on this project. We wish to thank Itai Benjamini, Zhenhao Cai, Gady Kozma, and Eitan Sayag for productive mathematical discussion, and we wish to thank Sa'ar Zehavi for valuable discussions on the different algebraic number theory aspects of this project. We wish to thank Yotam Hendel, Edan Rein,  Francesco M. Saettone, and Sa'ar Zehavi for their input and comments on the different versions of this paper. 

\subsection*{Notation}\label{sec:notation}

We collect notation used throughout. For background on $p$-adic analysis we refer to \cite{Gouvea,Serre}, and for algebraic number theory to \cite{Marcus,Neukirch}.

\begin{description}[leftmargin=2.2cm,style=nextline]
\item[$\Z,\Q,\R,\C$] the integers, rationals, reals, complex numbers. We denote divisibility of integers using $\mid$. 
\item[$\quot{\Z}{n\Z}$] the integers modulo $n$, we write all quotient groups and rings this way (slanted fraction) rather than stacked, for typographic consistency.
\item[$\F_q$] the field with $q$ elements, $\F=\F_p$ when $p$ is fixed by context.
\item[$\Zp,\Qp,\Cp$] the $p$-adic integers, $p$-adic numbers, and (metric completion of an algebraic closure of $\Qp$).
\item[$\nu_p$] the $p$-adic valuation on $\Z$, $\Q$, or $\Qp$, normalized by $\nu_p(p)=1$. For a finite extension $L/\Qp$ we write $v_L$ (or $v_\mathfrak p$, below) for the valuation on $L$ normalized the same way on $\Qp\subset L$, i.e.  $v_L(p)=e_L$ and $v_L|_{\Qp} = e_Lv_p$. We denote by $\log_p(\cdot)$ the $p$-adic logarithm function (as opposed to $\log$, which denotes the real logarithm function. 
\item[$\mathfrak p\mid p$] a prime ideal $\mathfrak p$ of the ring of integers $\mathcal O_K$ of a number field $K$ lying above the rational prime $p$ (i.e.\ $\mathfrak p\cap\Z=p\Z$), $e_\mathfrak p,f_\mathfrak p$ denote its ramification index and residue degree. We have the equality $\sum_{\mathfrak p\mid p}e_\mathfrak p f_\mathfrak p=[K:\Q]$.
\item[$\omega(u),\ \langle u\rangle$] for a unit $u$ in the ring of integers of a local field with finite residue field $\F_q$, the Teichm\"uller lift $\omega(u)$ is the unique $(q-1)$-th root of unity congruent to $u$ modulo the maximal ideal, and $\langle u\rangle=\frac{u}{\omega(u)}$ is its principal unit part, so $u=\omega(u)\langle u\rangle$. For more information, see~\cite[Ch.~II]{Serre}.
\item[$\mu$] the number-theoretic M\"obius function.
\item[$\Cay(G,S)$] the Cayley graph of a group $G$ with respect to some generating set $S$ (that satisfies $1_G \notin S=S^{-1}$): vertex set $G$, with $x\sim y$ iff $xy^{-1}\in S$.
\item[$\Ind(\Gamma)$] the set of independent sets of a graph $\Gamma$ with $i(\Gamma)=|\Ind(\Gamma)|$.
\item[$\Stab_G(x)$] the stabilizer of $x$ under a group action of $G$, and with $\Fix_H(X)$ being the fixed points of a subgroup $H$ acting on a set $X$.
\item[$\tr,\ \rho(A)$] the trace of a matrix, and the spectral radius of a matrix $A$ (largest modulus of an eigenvalue).
\item[$\Mat_r(\Z)$] $r\times r$ matrices with integer entries.
\item[$\Gal(L/\Q)$] the Galois group of a Galois extension $L/\Q$.
\item[{$K$}] a number field, with ring of integers $\mathcal O_K$, and its degree $[K:\Q]$, $N_{K/\Q},\ \operatorname{Tr}_{K/\Q}$ the norm and trace to $\Q$, $\operatorname{disc}(K)$ its discriminant. The \textbf{conjugates} of $\alpha\in K$ are the images of $\alpha$ under the $[K:\Q]$ embeddings $K\hookrightarrow\C$.
\end{description}

\section{The Constants $\kappa_d$ and the Transfer Matrices $M_d(n)$}\label{sec:kappa}

We start by defining the constants $\kappa_d$ and proving some basic properties that we use throughout this paper. 

\begin{definition}\label{def:torus}
For $d,n\ge1$, the \textbf{$d$-dimensional discrete torus of side $n$} is
\[
T_d(n)\ =\ \Cay\big((\quot{\Z}{n\Z})^d,\,S_n\big),\qquad S_n=\{\pm e_1,\dots,\pm e_d\}\setminus\{0\}\subseteq(\quot{\Z}{n\Z})^d,
\]
where $e_i$ is the $i$-th standard basis vector. We write $a_d(n)=i(T_d(n))$ and we define $\kappa_d=\lim_{n\to\infty}a_d(n)^{1/n^d}$. By convention, we set both $T_d(1)$ and $T_0(n)$ to the single-vertex graphs with no edges, and we set $a_0(n)=2$ for every $n$.
\end{definition}

\begin{remark}
    For every $n\ge3$, the graph $T_d(n)$ is simple and $2d$-regular. At $n=2$, since $e_i\equiv-e_i$, we have that $S_2=\{e_1,\dots,e_d\}$ and therefore $T_d(2)$ is the $d$-dimensional hypercube graph which is $d$-regular. In the $d=1$ case we get the cyclic graph on $n$ vertices, which is $2$-regular for every $n \geq 3$, $1$-regular if $n=2$, and $T_1(1)$ contains no edges. The study of independent sets of the hypercube graph goes back to the asymptotic computation of Korshunov and Sapozhenko~\cite{korshunov1983number, sapozhenko1987number}, which was later expanded upon by Galvin~\cite{galvin2011threshold, galvin2019independent}. For more information, see~\cite{JenssenPerkins2020}.
\end{remark}

\begin{example}\label{ex:lucas}
For $d=1$ and $n \geq 2$, we have that $a_1(n)=L_n$, the $n$-th Lucas number, where the Lucas numbers are defined by the recurrence $L_{n+2}=L_{n+1}+L_n$ with $L_0=2$ and $L_1=1$. The Lucas numbers can be expressed using the formula $L(n) = (\frac{1+\sqrt{5}}{2})^{n}+(\frac{1-\sqrt{5}}{2})^n$, and so $\kappa_1=\frac{1+\sqrt{5}}{2}$.
\end{example}

We now establish the existence of $\kappa_d$, together with an explicit two-sided envelope and an effective convergence rate. In order to do so, we need to introduce an auxiliary graph that we use:

\begin{definition}
    Let $B_d(n)$ be the \textbf{grid graph} on $\{0,\dots,n-1\}^d$ (the $d$-fold Cartesian product of the path on $n$ vertices, with no wraparound), and denote $b_d(n)=i(B_d(n))$.
\end{definition}

\begin{remark}
    In fact, when defining the hard square entropy constant $\kappa_2$, one traditionally uses the grid graph $B_2(n)$ instead of the torus graph $T_2(n)$ (for example, see~\cite{BaxterBook1982}). Yet,  we use the definition based upon the torus in order to define $\kappa_d$ since, as we see in Section~\ref{sec:tame}, the finite group $(\sfrac{\Z}{n\Z})^d$ acts on the torus $T_d(n)$ and this action allows us to translate the problem from a combinatorial one to an arithmetic one.   
\end{remark}

 Note that $T_d(n)$ is exactly $B_d(n)$ together with one bundle of $n^{d-1}$ wraparound edges per axis, so $T_d(n)$ has $dn^{d-1}$ more edges than $B_d(n)$.

\begin{theorem}\label{thm:existence}
For every $d\ge1$ the constant $\kappa_d$ is well defined and for every $n\ge1$ we have that 

\begin{enumerate}
    \item $\kappa_d^{\,n^d}\,2^{-dn^{d-1}}\ \le\ a_d(n)\ \le\ \kappa_d^{\,(n+1)^d}$, 
    \item $
\left|\frac1{n^d}\log a_d(n)-\log\kappa_d\right|\ \le\ \frac{d\log2}n.$
\end{enumerate}
\end{theorem}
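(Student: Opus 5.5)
The plan is to prove everything through the grid counts $b_d(n)$, which behave well under block decompositions, and then compare $a_d(n)$ with $b_d(n)$. The argument has four steps: two sandwich inequalities between $a_d$ and $b_d$, a Fekete-type limit for $b_d$, the bounds in item 1, and a separate block comparison on tori for the upper estimate in item 2.

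\textbf{Step 1: sandwich between torus and grid.} I will show
\[
b_d(n)\,2^{-dn^{d-1}}\le a_d(n)\le b_d(n), \qquad b_d(n-1)\le a_d(n).
\]
\begin{itemize}
\item The inequality $a_d(n)\le b_d(n)$ holds because $B_d(n)$ is a spanning subgraph of $T_d(n)$.
\item For $b_d(n-1)\le a_d(n)$, place an independent set of $B_d(n-1)$ on coordinates $\{0,\dots,n-2\}^d$. Every wraparound edge meets the layer $x_i=n-1$, which is empty, so the set is independent in $T_d(n)$.
\item For the first inequality, map an independent set of $B_d(n)$ to the set obtained by deleting its vertices lying on the union of the $d$ layers $\{x_i=n-1\}$. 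The image is independent in $T_d(n)$. Each fiber has size at most $2^{dn^{d-1}}$, since that union has at most $dn^{d-1}$ vertices.
\end{itemize}

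\textbf{Step 2: the limit for $b_d$.} Cutting $\{0,\dots,mn-1\}^d$ into $m^d$ cubes of side $n$, and restricting an independent set to each cube, gives submultiplicativity $b_d(mn)\le b_d(n)^{m^d}$. Separating $m^d$ cubes of side $n$ by empty layers gives $b_d(m(n+1)-1)\ge b_d(n)^{m^d}$. Together with monotonicity of $b_d$ in $n$, a standard Fekete-type argument shows that $\frac1{n^d}\log b_d(n)$ converges. Call the limit $\log\kappa$. The two block inequalities then give, for every $n$,
\[
\kappa^{n^d}\le b_d(n)\le \kappa^{(n+1)^d}.
\]
The lower bound comes from letting $m\to\infty$ in the submultiplicative inequality. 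The upper bound comes from letting $m\to\infty$ in the supermultiplicative one.

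\textbf{Step 3: existence of $\kappa_d$ and item 1.} Step 1 gives $\frac1{n^d}\log a_d(n)=\frac1{n^d}\log b_d(n)+O(1/n)$. Hence $\kappa_d$ exists and equals $\kappa$. Combining Steps 1 and 2:
\begin{itemize}
\item $a_d(n)\le b_d(n)\le\kappa_d^{(n+1)^d}$;
\item $a_d(n)\ge 2^{-dn^{d-1}}b_d(n)\ge \kappa_d^{n^d}2^{-dn^{d-1}}$.
\end{itemize}
This is item 1.

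\textbf{Step 4: item 2, the main obstacle.} The lower half of item 2 follows directly from the lower bound in item 1. The upper bound $\kappa_d^{(n+1)^d}$ is too weak, because $(1+1/n)^d-1$ can far exceed $d/n$ when $d$ is large compared with $n$. So I need a direct block comparison on tori:
\[
a_d(n)^{m^d}\le a_d(mn)\,2^{dm^dn^{d-1}}.
\]
To prove it, tile $T_d(mn)$ by $m^d$ cubes of side $n$ and place an arbitrary independent set of $T_d(n)$ in each cube. Conflicts can only occur across cube faces. Deleting, in every cube, the vertices on its $d$ top layers produces an independent set of $T_d(mn)$. Each fiber of this map has size at most $2^{dm^dn^{d-1}}$, and the map is injective up to these fibers, which gives the inequality. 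Now take logarithms, divide by $(mn)^d$, and let $m\to\infty$ using Step 3. This yields
\[
\frac1{n^d}\log a_d(n)\le\log\kappa_d+\frac{d\log 2}{n},
\]
which completes item 2.
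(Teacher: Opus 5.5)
Your proposal is correct and follows essentially the paper's route. You use the same torus/grid sandwich $a_d(n)\le b_d(n)\le 2^{dn^{d-1}}a_d(n)$, the same sub- and super-multiplicativity (with empty buffer layers) giving $\kappa_d^{n^d}\le b_d(n)\le\kappa_d^{(n+1)^d}$, and the same tiling of $T_d(mn)$ with a $2^{dm^dn^{d-1}}$ boundary loss for the upper half of item 2. The differences are cosmetic: you count fibers after deleting boundary-layer vertices where the paper uses the edge-deletion bound $i(G)\le i(G-uv)\le 2i(G)$, and you tile with $T_d(n)$-configurations to get $a_d(n)^{m^d}\le 2^{dm^dn^{d-1}}a_d(mn)$ directly, whereas the paper tiles with $B_d(n)$ to get the stronger $b_d(n)^{k^d}\le 2^{dk^dn^{d-1}}a_d(kn)$ and then applies $a_d(n)\le b_d(n)$.
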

\begin{proof}
As the proof of this theorem is long, we break it up into steps:\\

\underline{Step 1: edge deletion.} For any graph $G$ and a single edge $uv$, deleting it to form $H=G-uv$ gives that $i(G)\le i(H)\le2i(G)$. This is true since every $G$-independent set is $H$-independent, and every $H$-independent set either avoids one of $u,v$ (and therefore is already $G$-independent) or contains both, in which case removing $v$ leaves a $G$-independent set, and this is injective on such sets, so $i(H)\le i(G)+i(G)$. Iterating over $m$ deleted edges gives $i(G)\le i(H)\le2^mi(G)$. Applied to $G =T_d(n)$ and $H=B_d(n)$, in which $dn^{d-1}$ edges were deleted, we get that:
\begin{equation*}\label{eq:torusgridcompare}
a_d(n)\ \le\ b_d(n)\ \le\ 2^{dn^{d-1}}a_d(n).
\end{equation*}
In particular, we get that $a_d(n)^{\frac{1}{n^d}} \leq b_d(n)^{\frac{1}{n^d}} \leq 2^{\frac{d}{n}}a_d(n)^{\frac{1}{n^d}}$.\\

\underline{Step 2: monotonicity of $b_d$.} For every graph $G$ and every vertex $v$ of $G$, partitioning the independent sets of $G$ by whether they contain $v$ gives the formula $i(G)=i(G-v)+i(G-N[v])\ge i(G-v)$, where $N[v]$ is the closed neighborhood of $v$. Iterating over several deleted vertices shows that deleting vertices from a graph (i.e.\ passing to an induced subgraph) never increases the number of independent sets. Therefore, for $m\le n$, the sub-box $\{0,\dots,m-1\}^d\subseteq\{0,\dots,n-1\}^d$ induces, inside $B_d(n)$, exactly the graph $B_d(m)$ (as the grid-edge relation is local and does not see the ambient box size), so $B_d(m)$ is an induced subgraph of $B_d(n)$ and we get that $b_d(m)\ \le\ b_d(n)$ for every $2 \leq m \leq n$. \\

\underline{Step 3: submultiplicativity along multiples.} Partitioning $\{0,\dots,kn-1\}^d$ into $k^d$ translated copies of $\{0,\dots,n-1\}^d$ and restricting an independent set of the big box to each copy gives an injection $\Ind(B_d(kn))\hookrightarrow\Ind(B_d(n))^{k^d}$ (a $B_d(kn)$-independent set restricted to any sub-box is $B_d(n)$-independent, and the restrictions to the $k^d$ sub-boxes together determine the original set), so $b_d(kn)\ \le\ b_d(n)^{k^d}$.\\

\underline{Step 4: super-multiplicativity via buffering.} Inside $\{0,\dots,k(n+1)-2\}^d$, insert a single empty coordinate layer between each pair of axis-adjacent $n$-blocks (i.e., $k-1$ buffer layers along each axis, so the total side length is $kn+(k-1)=k(n+1)-1$), leaving every buffer vertex unoccupied and placing an arbitrary $B_d(n)$-independent set independently in each of the $k^d$ blocks produces a $B_d\big(k(n+1)-1\big)$-independent set, injectively in the $k^d$-tuple of choices (distinct blocks are separated by an empty layer along every axis, so no adjacency is ever created between blocks). Thus we get that $b_d(n)^{k^d}\ \le\ b_d\big(k(n+1)-1\big)$.\\

\underline{Step 5: existence of the limit.} In order to show that $\kappa_d$ is well defined, we set it to be $\kappa_d=\inf_{n\ge1}b_d(n)^{1/n^d}\in(0,2]$ (which is finite and positive since $1\le b_d(n)\le 2^{n^d}$ term-wise with respect to $n$) and we show that it equals the limit in  Definition~\ref{def:torus}. By the definition of the infimum we get that $b_d(n)\ \ge\ \kappa_d^{\,n^d}$ for every $n \geq 1$, and so $\liminf_n b_d(n)^{1/n^d}\ge\kappa_d$. For the reverse inequality, fix $m\ge2$ and for every $n\ge m$, by the Euclidean algorithm we can write $n=q_n m+r_n$ where $q_n\ge1$ and $0\le r_n<m$, and so $n\le(q_n+1)m$. By the monotonicity of $\{b_d(n)\}_{n=1}^\infty$ and from the fact that $b_d(kn)\ \le\ b_d(n)^{k^d}$, we get that 
\[
b_d(n)\ \le\ b_d\big((q_n+1)m\big)\ \le\ b_d(m)^{(q_n+1)^d}.
\]
By taking $n^d$-th roots we get that $b_d(n)^{1/n^d}\le \big[b_d(m)^{1/m^d}\big]^{(\frac{(q_n+1)m}{n})^d}$. As $n\to\infty$ with $m$ fixed we have that $q_n\to\infty$ and $1\le(q_n+1)\frac{m}{n}<(q_n+1)\frac{m}{qm}=1+\frac{1}{q_n}\to1$, so $(q_n+1)\frac{m}{n}\to1$. By continuity of the function $x\mapsto x^\alpha$ at $\alpha=1$ (with base $b_d(m)^{\frac{1}{m^d}}>0$ fixed) we get that $\limsup_n b_d(n)^{\frac{1}{n^d}}\le b_d(m)^{\frac{1}{m^d}}$. Since $2\le m$ was arbitrary, we have that $\limsup_n b_d(n)^{\frac{1}{n^d}}\le\inf_{m\ge2}b_d(m)^{\frac{1}{m^d}}=\kappa_d$. Note that the infimum over $m\ge2$ agrees with that over all $m\ge1$, since we have that $b_d(m) \leq 2^{m^d}$ and so $b_d(m)^{\frac{1}{m^d}} \leq 2 =b_d(1)$. Combined with the liminf bound, we get that $b_d(n)^{\frac{1}{n^d}}\to\kappa_d$ as $n\to\infty$.\\ 

\underline{Step 6: the limit of $b_d(n)^{\frac{1}{n^d}}$.} Since $b_d(n)^{k^d}\ \le\ b_d\big(k(n+1)-1\big)$, then by sending $k \to \infty$ and writing $N=k(n+1)-1$, we get that
\[
b_d(n)\ \le\ b_d(N)^{\frac{1}{k^d}}\ =\ \Big[b_d(N)^{\frac{1}{N^d}}\Big]^{\frac{N^d}{k^d}}.
\]
Together with Step 5, we can conclude that $b_d(N)^{\frac{1}{N^d}}\to\kappa_d$ as $k\to\infty$ (equivalently $N\to\infty$), and $\frac{N^d}{k^d}=\frac{\big(k(n+1)-1\big)^d}{k^d}\to(n+1)^d$, since $b_d(n)$ is a fixed quantity bounded above by a sequence converging to $\kappa_d^{(n+1)^d}$, we conclude $b_d(n)\le\kappa_d^{(n+1)^d}$. Together with the fact that $a_d(n)\ \le\ b_d(n)\ \le\ 2^{dn^{d-1}}a_d(n)$ and with the fact that $b_d(n)\ \ge\ \kappa_d^{\,n^d}$, we get that $a_d(n)\le b_d(n)\le\kappa_d^{(n+1)^d}$ and $a_d(n)\ge b_d(n)2^{-dn^{d-1}}\ge\kappa_d^{n^d}2^{-dn^{d-1}}$, which is exactly item 1. Taking logarithms and dividing by $n^d$ also shows that $a_d(n)^{1/n^d}\to\kappa_d$. Since $a_d(n)\ \le\ b_d(n)\ \le\ 2^{dn^{d-1}}a_d(n)$, we get that $|\log a_d(n)-\log b_d(n)|\le dn^{d-1}\log2$, and so $\big|\log a_d(n)/n^d-\log b_d(n)/n^d\big|\le d\log2/n\to0$, and $\log b_d(n)/n^d\to\log\kappa_d$ by Step 5.\\

\underline{Step 7: the rate (item 2).} Fix $n\ge1$, for $k\ge1$ we can tile $\{0,\dots,kn-1\}^d$ by $k^d$ copies of $\{0,\dots,n-1\}^d$ and delete every cross-tile edge of $T_d(kn)$, including its $d$ wraparound bundles. A direct count (each of the $d$ axes contributes $k$ inter-tile boundaries, each carrying $(kn)^{d-1}$ edges) shows that $dk(kn)^{d-1}=dk^dn^{d-1}$ edges are deleted, and what remains is the disjoint union of $k^d$ copies of $B_d(n)$ (within a tile, and ignoring the edges leaving it, $T_d(kn)$ restricts to exactly the box grid $B_d(n)$). By the edge-deletion bound of Step 1 (applied with $G=T_d(kn)$, the graph with more edges, and $H$ the disjoint union), we get that
\[
0\ \le\ \log b_d(n)^{k^d}-\log a_d(kn)\ \le\ m\log2,
\]
and by dividing by $(kn)^d=k^dn^d$ and letting $k\to\infty$ (using the fact that $a_d(kn)^{1/(kn)^d}\to\kappa_d$ from Step 6, applied along the subsequence $kn$), we get the one-sided bound
\begin{equation*}
0\ \le\ \frac{\log b_d(n)}{n^d}-\log\kappa_d\ \le\ \frac{d\log2}n.
\end{equation*}
Separately, the inequality $a_d(n)\ \le\ b_d(n)\ \le\ 2^{dn^{d-1}}a_d(n)$ (which comes from deleting $dn^{d-1}$ edges) gives us that
\begin{equation*}
-\frac{d\log2}n\ \le\ \frac{\log a_d(n)}{n^d}-\frac{\log b_d(n)}{n^d}\ \le\ 0.
\end{equation*}
Therefore, we can conclude that $\left|\frac{\log a_d(n)}{n^d}-\log\kappa_d\right|\ \le\ \frac{d\log2}n$, which is exactly item 2.
\end{proof}

\begin{remark}
Item 2 in Theorem~\ref{thm:existence} is in fact stronger than the naive attempt to apply item $1$. This is true since given that $\kappa_d^{n^d}\le b_d(n)\le\kappa_d^{(n+1)^d}$, we can only conclude that $\log\kappa_d\le\frac{\log b_d(n)}{n^d}\le\big(\frac{(n+1)}{n}\big)^d\log\kappa_d$, and the gap $\big((1+\frac{1}{n})^d-1\big)\log\kappa_d$ on the right is not bounded by $d\frac{\log2}{n}$ once $d>1$ (since Bernoulli's inequality only gives us the weaker $(1+\frac{1}{n})^d-1\le\frac dn(1+\frac{1}{n})^{d-1}$).
\end{remark}

\begin{lemma}\label{lem:naive_bound}
For every $d\ge1$ and even $n$ we have that $2^{\frac{n^d}{2}}\le a_d(n)\le3^{\frac{n^d}{2}}$, and so $\sqrt2\le\kappa_d\le\sqrt3$.
\end{lemma}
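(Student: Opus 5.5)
For even $n$, the torus $T_d(n)$ is bipartite. Color each vertex $x\in(\quot{\Z}{n\Z})^d$ by the parity of $x_1+\dots+x_d$. This is well defined modulo $n$ because $n$ is even, and every generator $\pm e_i$ flips the parity, so every edge joins opposite classes. The two parity classes $V_0,V_1$ each have exactly $n^d/2$ vertices.

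For the lower bound, every subset of $V_0$ is independent, so $a_d(n)\ge 2^{|V_0|}=2^{n^d/2}$.

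For the upper bound, pair the vertices. Let $\phi(x)=x+e_1$. The map $\phi$ sends $V_0$ bijectively onto $V_1$, and $\{x,\phi(x)\}$ is an edge for every $x\in V_0$; when $n=2$ this is still an edge, since $S_2=\{e_1,\dots,e_d\}$. Hence the edges $\{x,x+e_1\}$ with $x\in V_0$ form a perfect matching of $T_d(n)$ with $n^d/2$ edges. An independent set meets each matched pair in one of the three patterns $\emptyset$, $\{x\}$, $\{x+e_1\}$, never in both endpoints. The restrictions to the pairs determine the set, so $a_d(n)\le 3^{n^d/2}$. This is the same injectivity argument as Step 3 of Theorem~\ref{thm:existence}, applied to the pairs instead of boxes.

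For the constants, take $n^d$-th roots along even $n$: $\sqrt2\le a_d(n)^{1/n^d}\le\sqrt3$. Since $a_d(n)^{1/n^d}\to\kappa_d$ by Theorem~\ref{thm:existence}, and the even integers form a subsequence, letting $n\to\infty$ through even values gives $\sqrt2\le\kappa_d\le\sqrt3$.

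There is no real obstacle. The only points to check are that the parity coloring is well defined, which needs $n$ even, and that the matching remains valid in the degenerate case $n=2$, where $T_d(2)$ is the hypercube; both were verified above.
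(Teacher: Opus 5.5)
Your proof is correct and follows the paper's argument: the parity bipartition gives the lower bound, and a perfect matching by $e_1$-edges gives the upper bound. The paper pairs $\{x,x+e_1\}$ with $x_1$ even rather than with $x$ in the even class, which makes no difference, and your explicit checks of the $n=2$ case and of the limit along even $n$ supply details the paper leaves implicit.
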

\begin{proof}
For the lower bound, we can partition $T_d(n)$ into two sets, $E=\left\{x:\sum_i x_i\equiv0\pmod2\right\}$ and $O=\left\{x:\sum_i x_i\equiv1\pmod2\right\}$, each of size $\frac{n^d}{2}$. Every subset of either $E$ or $O$ is an independent set, and so $a_d(n)\ge2^{\frac{n^d}{2}}$. For the upper bound, since $n$ is even, the edges $\{x,x+e_1\}$ over all $x$ with $x_1$ even partition the vertex set into $\frac{n^d}{2}$ disjoint pairs, an independent set meets each pair in at most one vertex, giving at most $3$ choices per pair (either endpoint, or neither), and so $a_d(n)\le3^{\frac{n^d}{2}}$.%Taking $n^d$-th roots and $n\to\infty$ along even $n$ in Theorem~\ref{thm:existence} gives $\sqrt2\le\kappa_d\le\sqrt3$.
\end{proof}

\begin{remark}
The bounds presented in Lemma~\ref{lem:naive_bound} are far from sharp. For example, in the $d=2$ case, we in fact know that $\kappa_2\in[1.5030,1.5031]$ since $\kappa_2\approx1.503048$. In fact, Theorem~\ref{thm:decreasing} below together with Example~\ref{ex:lucas} gives us that $\sqrt{2} < \kappa_d \leq \kappa_1 = \frac{1+\sqrt{5}}{2}$ for every $d \geq 1$, which is already a better upper bound than Lemma~\ref{lem:naive_bound}. Yet, the bound $\sqrt2\le\kappa_d\le\sqrt3$ is used extensively, especially in Sections~\ref{sec:padic} and~\ref{sec:Ed}. In addition, there has been much research on the numerical approximation and on the computation of the digits of $\kappa_2$. For more details, see for example~\cite{baxter1999planar, CalkinWilf1998,Pavlov2012}. 
\end{remark}

The following theorem summarizes the main properties of $\{\kappa_d\}_{d=1}^\infty$ as a family.

\begin{theorem}\label{thm:decreasing}
For every $d \geq 1$ we have
\begin{enumerate}
    \item $\kappa_{d+1} \leq \kappa_d$,
    \item $\sqrt2 < \sqrt2\,
\bigl(1+2^{-2d}\bigr)^{1/[2(2d^2+1)]} \ \le\ \kappa_d\ \le\ \big(2^{2d+1}-1\big)^{1/4d}$,
    \item  $0\ \le\ \kappa_d-\sqrt2\ <\ \frac{\sqrt2\,(e-1)\log2}{4d}$.
\end{enumerate}
In particular, $\kappa_d \to \sqrt{2}$  as $d\to \infty$.  
\end{theorem}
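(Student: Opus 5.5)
We prove the three items separately, working with the grid counts $b_d(n)$ and the torus counts $a_d(n)$ from Theorem~\ref{thm:existence}. Recall that $\kappa_d=\inf_n b_d(n)^{1/n^d}=\lim_n b_d(n)^{1/n^d}=\lim_n a_d(n)^{1/n^d}$.

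\emph{Item 1 (monotonicity).} We have $B_{d+1}(n)=B_d(n)\,\square\,P_n$. Delete every edge in the $(d+1)$-st coordinate direction. What remains is the disjoint union of $n$ copies of $B_d(n)$. Deleting edges never decreases the number of independent sets (Step~1 of Theorem~\ref{thm:existence}), so
\[
b_{d+1}(n)\le b_d(n)^n .
\]
Taking $n^{d+1}$-th roots gives $b_{d+1}(n)^{1/n^{d+1}}\le b_d(n)^{1/n^d}$. Letting $n\to\infty$ yields $\kappa_{d+1}\le\kappa_d$.

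\emph{Item 2, upper bound.} For even $n\ge4$ the torus $T_d(n)$ is bipartite, with the parity classes $E,O$ of Lemma~\ref{lem:naive_bound}. It is also $2d$-regular. We invoke Kahn's entropy bound (Shearer's lemma) for $r$-regular bipartite graphs on $N$ vertices, namely $i(G)\le(2^{r+1}-1)^{N/(2r)}$. With $r=2d$ and $N=n^d$ this gives
\[
a_d(n)\le\big(2^{2d+1}-1\big)^{n^d/(4d)} .
\]
Taking $n^d$-th roots along even $n$ gives the stated upper bound.

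\emph{Item 2, lower bound.} Let $n$ be even and large, and set $N=n^d$. Choose greedily a set $D\subseteq O$ of odd vertices that are pairwise at graph distance at least $4$. The odd vertices within distance $2$ of a given odd vertex number $1+2d+\bigl(2d+4\binom d2\bigr)=2d^2+1$. Hence each greedy choice excludes at most $2d^2+1$ odd vertices, and we obtain
\[
|D|\ge \frac{N/2}{2d^2+1}
\]
(up to negligible boundary effects on the torus, which vanish as $n\to\infty$). Now build independent sets as follows: pick any $S\subseteq D$, put $S$ in the set, remove the even neighbours of $S$, and choose an arbitrary subset of the remaining even vertices. Since distinct $v\in D$ have disjoint even neighbourhoods, each such $v$ is a $2^{2d}$-versus-$(2^{2d}+1)$ choice. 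That is, it contributes a multiplicative factor $1+2^{-2d}$, so
\[
a_d(n)\ge 2^{N/2}(1+2^{-2d})^{N/(2(2d^2+1))} .
\]
Taking roots gives the lower bound, and the strict inequality with $\sqrt2$ is immediate.

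\emph{Item 3.} Since $2^{2d+1}-1<2^{2d+1}$, the upper bound gives
\[
\kappa_d<\sqrt2\cdot 2^{1/(4d)} .
\]
Convexity of $t\mapsto e^t$ on $[0,1]$ gives $e^t-1\le(e-1)t$ there. Apply this with $t=\log2/(4d)\le1$ to get $\kappa_d-\sqrt2<\sqrt2(e-1)\log2/(4d)$. The lower bound $0\le\kappa_d-\sqrt2$ comes from item 2. Together these give $\kappa_d\to\sqrt2$.

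The main obstacle is the upper bound in item 2, which relies on importing Kahn's theorem rather than proving it. If a self-contained argument is desired, I would reproduce the Shearer-lemma entropy proof on the bipartition $E\cup O$. The greedy packing on the torus is routine by comparison.
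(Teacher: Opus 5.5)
Your proof is correct and follows essentially the same route as the paper: layer slicing for item~1 (you do it on the grids $b_d$ rather than the tori $a_d$, which works equally well via $\kappa_d=\inf_n b_d(n)^{1/n^d}$), Kahn's entropy bound for the upper estimate (you use the bipartite version along even $n$, where the paper quotes Kahn--Zhao for all $2d$-regular tori), a greedy packing of odd vertices with pairwise disjoint even neighbourhoods for the lower estimate, and the convexity bound $e^t\le 1+(e-1)t$ for item~3. One small slip: your expression $1+2d+\bigl(2d+4\binom d2\bigr)$ actually equals $2d^2+2d+1$, because the $2d$ vertices at distance one are even and should not be counted; the correct count of odd vertices within distance two is $1+2d^2$, which is the value you then use, and there are no boundary effects to account for since the torus is vertex-transitive.
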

\begin{proof}
For part 1, we can slice $T_{d+1}(n)$ along its last coordinate into $n$ layers (each isomorphic to $T_d(n)$), which restricts an independent set of $T_{d+1}(n)$ to an independent set on each layer. This gives us the inequality $a_{d+1}(n)\le a_d(n)^n$, and by taking $n^{d+1}$-th roots and $n\to\infty$, we get that $\kappa_{d+1}\le\kappa_d$.\\

For part 2, note that for a $2d$-regular graph $G$ on $N$ vertices, the Kahn-Zhao theorem (see ~\cite{Kahn2001,Zhao2010}) tells us that $i(G)\le\big(2^{2d+1}-1\big)^{N/4d}$. By applying it to the graph $T_d(n)$ (which is $2d$-regular on $n^d$ vertices for $n\ge3$) we get that $a_d(n)\le(2^{2d+1}-1)^{n^d/4d}$, and so we get that $\kappa_d\le(2^{2d+1}-1)^{1/4d}$. For the lower bound, let $n\ge6$ be even and put $N=n^d$. As in Lemma~\ref{lem:naive_bound}, the torus $T_d(n)$ is bipartite and can be partitioned into two sets,  $E=\left\{x:\sum_i x_i\equiv0\pmod2\right\}$ and $O=\left\{x:\sum_i x_i\equiv1\pmod2\right\}$,  each of cardinality $N/2$. Construct a conflict graph $\Gamma$ on $O$: two distinct vertices $u,v\in O$ are adjacent in $\Gamma$ when they have a common neighbor in $E$. For fixed $u$, the possible differences $v-u$ are $\pm2e_i$ or $\pm e_i\pm e_j$ for $i\ne j$. There are at most $2d+4\binom d2=2d^2$ such vertices. Thus, the largest degree in $\Gamma$ is at most $2d^2$.\\

Since every graph of maximum degree $\Delta$ has an independent set of size at least $|V|/(\Delta+1)$, we can find some independent set $U\subseteq O$ in $\Gamma$ with such that $|U|\ge\frac{N}{2(2d^2+1)}$. By construction, the neighbor sets $N(u)\subseteq E$ for $u\in U$ are pairwise disjoint and each has size $2d$. In addition, for every subset $S\subseteq U$, we can choose an arbitrary subset of $E\setminus N(S)$. The resulting union is independent in $T_d(n)$. Therefore we have that 
\[
a_d(n) \ge
\sum_{S\subseteq U}2^{|E|-|N(S)|} =
\sum_{S\subseteq U}2^{N/2-2d|S|} =
2^{N/2}\bigl(1+2^{-2d}\bigr)^{|U|},
\]
and thus by taking $N$-th roots and then letting $n\to\infty$ through even integers the result follows.\\

For part 3, since $2^{2d+1}-1<2^{2d+1}$ we can conclude that $(2^{2d+1}-1)^{1/4d}<2^{(2d+1)/4d}=\sqrt2\cdot2^{1/4d}$. By convexity of $t\mapsto e^t$, the graph of $e^t$ lies below the line connecting the points $(0,e^0)=(0,1)$ and $(1,e^1)=(1,e)$ for $t\in[0,1]$, i.e.\ $e^t\le1+(e-1)t$ for $t\in[0,1]$. By applying this at $t=\tfrac1{4d} \log2\in(0,\log2]\subset[0,1]$ (since $\log2<1$), we get that  $2^{1/4d}\le1+\tfrac{(e-1)\log2}{4d}$, and so (together with Lemma~\ref{lem:naive_bound}) we get that 
\[
\sqrt{2} \leq \kappa_d \leq (2^{2d+1}-1)^{1/4d}< \sqrt2\cdot2^{1/4d} \ <\ \sqrt2\Big(1+\tfrac{(e-1)\log2}{4d}\Big)\ =\ \sqrt2+\frac{\sqrt2\,(e-1)\log2}{4d},
\]
and the result follows.  %\textbf{Limits.} By item 2, $\kappa_d-\sqrt2\to0$, i.e.\ $\kappa_d\to\sqrt2$, since both $\kappa_d,\kappa_{d+1}\to\sqrt2\ne0$, their ratio $\kappa_{d+1}/\kappa_d\to1$.
\end{proof}

\begin{remark}
    Intuitively, we can view the fact that $\kappa_d \to \sqrt{2}$  as telling us that for large enough $d$, "almost all" the independent sets of $T_d(n)$ are going to be the subsets of $O$ and $E$, as in the proof of part 2 of Theorem~\ref{thm:decreasing} and as in Lemma~\ref{lem:naive_bound}. This intuition is in fact related to the concept of the limit of the independence entropy in $\mathbb{Z}^d$~\cite{louidor2013independence, korshunov1983number, ordentlich2004independent}. 
\end{remark}

We end this section with a discussion on an alternative approach to the definition of $\kappa_d$ that relies on a special matrix we call the transfer matrix. 

\begin{definition}\label{def:transfer}
For $d\ge1$, the \textbf{transfer matrix} of $a_d(n)$, denoted $M_d(n)$, is the $a_{d-1}(n)\times a_{d-1}(n)$ matrix indexed by $\Ind(T_{d-1}(n))$, with
\[
[M_d(n)]_{I,J}\ =\ \begin{cases}1&\text{if }I\cap J=\emptyset,\\0&\text{otherwise,}\end{cases}\qquad I,J\in\Ind(T_{d-1}(n)).
\]
\end{definition}

\begin{proposition}\label{prop:trace}
For every $d\ge1$ and every $n\ge2$, we have that  $a_d(n)=\tr\big(M_d(n)^n\big)$.
\end{proposition}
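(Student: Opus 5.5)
We will prove the statement by expanding the trace as a sum over closed walks and matching these walks bijectively with independent sets of $T_d(n)$, sliced along the last coordinate. By definition of matrix multiplication,
\[
\tr\big(M_d(n)^n\big)=\sum_{I_0,\dots,I_{n-1}\in\Ind(T_{d-1}(n))}\ \prod_{j=0}^{n-1}[M_d(n)]_{I_j,I_{j+1}},
\]
with indices read modulo $n$. Each summand is $0$ or $1$, and it equals $1$ exactly when $I_j\cap I_{j+1}=\emptyset$ for every $j\in\quot{\Z}{n\Z}$. Hence the trace counts cyclic sequences $(I_0,\dots,I_{n-1})$ of independent sets of $T_{d-1}(n)$ in which cyclically consecutive sets are disjoint.

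Next we define the bijection. Identify the vertex set $(\quot{\Z}{n\Z})^d$ with $(\quot{\Z}{n\Z})^{d-1}\times\quot{\Z}{n\Z}$. Send $I\in\Ind(T_d(n))$ to the sequence of slices $I_j=\{x:(x,j)\in I\}$. This map is clearly injective, and any sequence of subsets is the image of its union $\bigcup_j I_j\times\{j\}$. It remains to check that $I$ is independent in $T_d(n)$ if and only if every $I_j$ is independent in $T_{d-1}(n)$ and $I_j\cap I_{j+1}=\emptyset$ for all $j$. The edges of $T_d(n)$ split into two kinds: those with generator $\pm e_i$ for $i<d$, which stay inside a layer and are exactly the edges of $T_{d-1}(n)$ there; and those with generator $\pm e_d$, which join $(x,j)$ to $(x,j\pm1)$. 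An edge of the second kind lies in $I$ precisely when $x\in I_j\cap I_{j+1}$.

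The main point of care is the degenerate sizes. For $n\ge3$ the generators $\pm e_i$ are distinct, so everything above is clean. For $n=2$ we have $e_d=-e_d$ in $\quot{\Z}{2\Z}$, so $T_d(2)$ has a single edge between $(x,0)$ and $(x,1)$. The disjointness condition imposed twice, as $I_0\cap I_1=\emptyset$ and $I_1\cap I_0=\emptyset$, is then the same condition, so the count still agrees. Within a layer, the graph on the first $d-1$ coordinates of $T_d(2)$ coincides with $T_{d-1}(2)$, since both are defined through the generating set $S_2$. We need $n\ge2$ so that consecutive layers are distinct; for $n=1$ the torus has no edges while $M_d(1)$ has zero diagonal entry at $\{0\}$, which explains why the statement excludes it.

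For $d=1$ we must also check the conventions $T_0(n)=$ a single vertex and $a_0(n)=2$. Then $M_1(n)=\begin{pmatrix}1&1\\1&0\end{pmatrix}$, indexed by $\emptyset$ and the singleton, and $\tr(M_1(n)^n)=L_n$, which agrees with Example~\ref{ex:lucas}; the general argument already covers this case. Combining the bijection with the trace expansion gives $a_d(n)=\tr(M_d(n)^n)$.
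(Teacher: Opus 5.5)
Your proof is correct and is essentially the paper's argument: slice $T_d(n)$ along the last coordinate into $n$ copies of $T_{d-1}(n)$, and identify independent sets with cyclic sequences of pairwise-consecutive-disjoint layer independent sets, i.e.\ closed walks of length $n$ counted by $\tr\big(M_d(n)^n\big)$. Your explicit trace expansion and your separate checks of the degenerate cases are a useful addition that the paper leaves implicit; these are $n=2$ (where $e_d=-e_d$, so the disjointness condition is imposed twice but is the same condition) and $d=1$ (via the conventions for $T_0(n)$ and $a_0(n)=2$).
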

\begin{proof}
By slicing $T_d(n)$ into $n$ layers $L_0,\dots,L_{n-1}$ along its last coordinate, each a copy of $T_{d-1}(n)$ (indexed cyclically, since the last coordinate is also taken mod $n$), we get that an independent set $S$ of $T_d(n)$ corresponds, via $I_j=S\cap L_j$ (identified with a subset of the common vertex set of $T_{d-1}(n)$), to a sequence $(I_0,\dots,I_{n-1})$ with each $I_j$ independent in $T_{d-1}(n)$ (as edges within a layer are edges of $T_d(n)$) and, for every $j$ (indices mod $n$), $I_j\cap I_{j+1}=\emptyset$. This last condition is exactly the requirement that no vertex belongs to $S$ in two consecutive layers at the same position, which is necessary and sufficient for $S$ to avoid the last-coordinate edges, since layers two or more apart share no edges of $T_d(n)$ at all. This correspondence is a bijection, and a sequence $(I_0,\dots,I_{n-1})$ with $I_j\cap I_{j+1}=\emptyset$ cyclically is exactly a closed walk of length $n$ in the graph with adjacency matrix $M_d(n)$, the number of closed walks of length $n$ from a fixed vertex, summed over all vertices, is $\tr(M_d(n)^n)$.
\end{proof}

\begin{remark}
    Note that in the $d=1$ case we have that, using $a_0(n)=2$, that $M_1(n)$ is the $2\times2$ matrix indexed by $\{\emptyset,\{*\}\}$ with a single forbidden pair, i.e.\ $M_1(n)=\begin{bsmallmatrix}1&1\\1&0\end{bsmallmatrix}$). The eigenvalues of $M_1(n)$ are $\frac{1\pm\sqrt{5}}{2}$ and so $a_1(n) = (\frac{1+\sqrt{5}}{2})^{n}+(\frac{1-\sqrt{5}}{2})^n$, which recovers the result from Example~\ref{ex:lucas}.
\end{remark}

We record one further consequence of Proposition~\ref{prop:trace}, which relates the definition of $\kappa_d$ to a sequence of algebraic numbers. Specifically, since $M_d(n)$ is a real symmetric $0/1$ matrix, its spectral radius governs the values of $a_d(n)$ directly. We also isolate the comparison of transfer matrices at consecutive values of $n$, which both repairs and strengthens the naive trace-power comparison one might otherwise attempt.

% \begin{lemma}[Spectral monotonicity of the transfer matrices]\label{lem:submatrix}
% For every $d\ge2$ and $n\ge2$, $M_d(n)$ is (via the identification below) a principal submatrix of $M_d(n+1)$, so $\lambda_d(n)=\rho(M_d(n))$ satisfies .
% \end{lemma}
% \begin{proof}

% \end{proof}

\begin{proposition}\label{prop:Perron_Frob}
Let $n,d \in \mathbb{N}$ such that  $d\ge2$. Then:
\begin{enumerate}
    \item $M_d(n)$ has a real eigenvalue $\lambda_d(n)\ge0$ with $|\lambda| \le \lambda_d(n)$ for every other eigenvalue $\lambda$ of $M_d(n)$ (its \textbf{spectral radius}, $\lambda_d(n)=\rho(M_d(n))$).
    \item $\lambda_d(n)$ is a \textbf{weak Perron number}: a positive algebraic integer with $|\alpha|\le \lambda_d(n)$ for every conjugate $\alpha$ of $\lambda_d(n)$.
    \item We have an inequality $\lambda_d(n)\le\lambda_d(n+1)$.
    \item $\kappa_d = \lim_{n \to \infty} \lambda_d(n)^{1/n^{d-1}}$.
\end{enumerate}
\end{proposition}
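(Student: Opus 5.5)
The four items split naturally: items 1 and 2 are linear algebra on a fixed integer matrix, item 3 is a combinatorial embedding of transfer matrices, and item 4 combines the trace formula of Proposition~\ref{prop:trace} with item 3.

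For item 1, I would note that $M_d(n)$ is a real symmetric matrix with nonnegative entries. Its eigenvalues are therefore real, and by Perron--Frobenius its spectral radius $\rho(M_d(n))$ is itself an eigenvalue, namely the largest one.

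For item 2, the characteristic polynomial of $M_d(n)\in\Mat_r(\Z)$ is monic with integer coefficients. Hence $\lambda_d(n)$ is an algebraic integer, and its minimal polynomial divides the characteristic polynomial. Every conjugate is then an eigenvalue, so it has modulus at most $\lambda_d(n)$. For positivity I would use the Rayleigh quotient with the unit vector $e_\emptyset$, whose diagonal entry is $[M_d(n)]_{\emptyset,\emptyset}=1$. This gives $\lambda_d(n)\ge e_\emptyset^TM_d(n)e_\emptyset=1>0$.

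For item 3, I would show that $M_d(n)$ is, up to simultaneous permutation of rows and columns, a principal submatrix of $M_d(n+1)$.
\begin{enumerate}
\item Identify $(\Z/n\Z)^{d-1}$ with the box $\{0,\dots,n-1\}^{d-1}$, and embed this box into $(\Z/(n+1)\Z)^{d-1}$.
\item Check which edges of $T_{d-1}(n+1)$ join two points of the box. Since $0$ and $n-1$ differ by $2$ modulo $n+1$, the wraparound edges are not among them. So these edges are only the grid edges, all of which are edges of $T_{d-1}(n)$. This needs a separate check for $n\le2$.
\item Conclude that the induced map $\varphi:\Ind(T_{d-1}(n))\to\Ind(T_{d-1}(n+1))$ is well defined and injective.
\item Note that $\varphi$ preserves both disjointness and non-disjointness of pairs $I,J$. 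Hence $M_d(n)$ is the principal submatrix of $M_d(n+1)$ on the image of $\varphi$.
\end{enumerate}
For symmetric matrices, Cauchy interlacing (or simply the Rayleigh quotient, using a Perron eigenvector of the submatrix padded with zeros) then gives $\lambda_d(n)\le\lambda_d(n+1)$. The main obstacle is exactly this embedding step: one must be careful that the wraparound in the smaller torus does not survive into the larger one. That works here only because $T_{d-1}(n)$ has \emph{more} constraints than the box it sits in.

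For item 4, write $\lambda=\lambda_d(n)$ and take $n$ even. Then every $\lambda_i^n\ge0$, so Proposition~\ref{prop:trace} gives
\[
\lambda^n\le a_d(n)=\tr\big(M_d(n)^n\big)\le a_{d-1}(n)\lambda^n\le 2^{n^{d-1}}\lambda^n .
\]
Taking logarithms and dividing by $n^d$ yields
\[
\Big|\tfrac{1}{n^{d-1}}\log\lambda-\tfrac{1}{n^d}\log a_d(n)\Big|\le \frac{\log2}{n}.
\]
By Theorem~\ref{thm:existence}, this gives $\lambda_d(n)^{1/n^{d-1}}\to\kappa_d$ along even $n$. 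For odd $n$, item 3 sandwiches $\lambda_d(n)$ between $\lambda_d(n-1)$ and $\lambda_d(n+1)$. Since $\lambda\ge1$ and $(n\pm1)^{d-1}/n^{d-1}\to1$, the normalized powers at $n\pm1$ converge to the same limit $\kappa_d$, and so does $\lambda_d(n)^{1/n^{d-1}}$.
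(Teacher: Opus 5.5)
Your proposal is correct and follows essentially the same route as the paper. Items 1--2 use Perron--Frobenius and the integral characteristic polynomial. Item 3 uses the inclusion $\Ind(T_{d-1}(n))\subseteq\Ind(T_{d-1}(n+1))$, which makes $M_d(n)$ a principal submatrix of $M_d(n+1)$, followed by a Rayleigh quotient. Item 4 uses the even-$n$ trace sandwich and then monotonicity for odd $n$.

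Your two small additions are improvements. The Rayleigh quotient at $e_\emptyset$ gives $\lambda_d(n)\ge 1$, which supplies the positivity that the paper asserts but never verifies, and also justifies taking logarithms. The crude bound $a_{d-1}(n)\le 2^{n^{d-1}}$ gives an explicit $\frac{\log 2}{n}$ rate along even $n$, instead of invoking Theorem~\ref{thm:existence} one dimension down.
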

\begin{proof}
$M_d(n)$ is real, symmetric, and has nonnegative entries, so by the Perron-Frobenius theorem \cite[Ch.~8]{HornJohnson} it has a nonnegative real eigenvalue $\lambda_d(n)$ equal to its spectral radius. Its characteristic polynomial is monic with integer coefficients, so $\lambda_d(n)$ is an algebraic integer, and every conjugate is again an eigenvalue of $M_d(n)$ (a root of the same characteristic polynomial), thus of modulus $\le\lambda_d(n)$. This proves items 1 and 2.\\

For item 3, we use the following general fact. Let $A$ be a real symmetric matrix with nonnegative entries, indexed by a finite set $X$, and let $A'$ be its principal submatrix on a subset $Y\subseteq X$ (i.e.\ $A'=A|_{Y\times Y}$), then $\rho(A')\le\rho(A)$. This is true since if $v\in\R^Y$ is a Perron eigenvector of $A'$ (nonnegative, by the Perron-Frobenius theorem \cite[Ch.~8]{HornJohnson}, as $A'$ is again real, symmetric, and entry-wise nonnegative), we can normalize it to $\|v\|=1$, and extend it by zero to a vector $\tilde v\in\R^X$. Since $A$ has nonnegative entries and $\tilde v$ vanishes off $Y$, only the $Y\times Y$ block of $A$ contributes to the Rayleigh quotient $\tilde v^TA\tilde v=v^TA'v=\rho(A')$. As $\rho(A)=\max_{\|w\|=1}w^TAw$ (the Rayleigh characterization of the top eigenvalue of a symmetric matrix), $\rho(A)\ge\tilde v^TA\tilde v=\rho(A')$.\\

Now, we identify the vertex set $\{0,\dots,n-1\}^{d-1}$ of $B_{d-1}(n)$ (the $(d-1)$-dimensional grid graph of Theorem~\ref{thm:existence}) with a subset of $(\sfrac{\Z}{(n+1)\Z})^{d-1}$, the vertex set of $T_{d-1}(n+1)$. Since $T_{d-1}(n+1)$'s wraparound edges only ever involve the coordinate value $n$, which does not occur in $\{0,\dots,n-1\}^{d-1}$, the induced subgraph of $T_{d-1}(n+1)$ on this sub-box is exactly $B_{d-1}(n)$, then every $B_{d-1}(n)$-independent set is also independent in $T_{d-1}(n+1)$, i.e.\ $\Ind(B_{d-1}(n))\subseteq\Ind(T_{d-1}(n+1))$. On the other hand $T_{d-1}(n)$ (on the same vertex set, relabeled as $(\quot{\Z}{n\Z})^{d-1}$) has “at least as many edges than $B_{d-1}(n)$ (the wraparound bundles), so $\Ind(T_{d-1}(n))\subseteq\Ind(B_{d-1}(n))$. Composing, $\Ind(T_{d-1}(n))\subseteq\Ind(T_{d-1}(n+1))$ as subsets of a common ground set. Since compatibility of two independent sets ($I\cap J=\emptyset$) does not depend on which ambient graph they are considered independent in, the entries of $M_d(n+1)$ restricted to the index subset $\Ind(T_{d-1}(n))\subseteq\Ind(T_{d-1}(n+1))$ agree exactly with $M_d(n)$, that is, $M_d(n)$ is a principal submatrix of $M_d(n+1)$. The first claim now gives $\lambda_d(n)=\rho(M_d(n))\le\rho(M_d(n+1))=\lambda_d(n+1)$.\\

For item 4, since $M_d(n)$ is symmetric it is orthogonally diagonalizable with real eigenvalues $\theta_1(n)=\lambda_d(n),\theta_2(n),\dots,\theta_{a_{d-1}(n)}(n)$, all of modulus $\le\lambda_d(n)$, then for every $m\ge1$,
\begin{equation}\label{eq:traceupper}
\tr\big(M_d(n)^m\big)\ =\ \sum_i\theta_i(n)^m\ \le\ a_{d-1}(n)\,\lambda_d(n)^m,
\end{equation}
with no parity restriction on $m$ (this only uses $|\theta_i(n)|\le\lambda_d(n)$). For \textbf{even} $m$, every term $\theta_i(n)^m\ge0$, and the term $i=1$ alone already contributes $\lambda_d(n)^m$, so
\begin{equation}\label{eq:tracelower}
\tr\big(M_d(n)^m\big)\ \ge\ \lambda_d(n)^m\qquad(m\text{ even}).
\end{equation}

\textbf{Convergence along even $n$.} Let $n$ be even. Then from  Proposition~\ref{prop:trace} together with the previous computations we get that:
\[
\lambda_d(n)^n\ \le\ a_d(n)\ \le\ a_{d-1}(n)\,\lambda_d(n)^n.
\]
Taking logarithms and dividing by $n^d$:
\[
\frac{\log\lambda_d(n)}{n^{d-1}}\ \le\ \frac{\log a_d(n)}{n^d}\ \le\ \frac{\log a_{d-1}(n)}{n^d}+\frac{\log\lambda_d(n)}{n^{d-1}}.
\]
The left inequality and $a_d(n)^{\frac{1}{n^d}}\to\kappa_d$ (Theorem~\ref{thm:existence}) give $\limsup_{n\text{ even}}\log\frac{\lambda_d(n)}{n^{d-1}}\le\log\kappa_d$. The right inequality, rearranged, gives $$\frac{\log (a_d(n))}{n^d}-\frac{\log (a_{d-1}(n))}{n^d}\le\frac{\log(\lambda_d(n))}{n^{d-1}}.$$ Since $\frac{\log a_{d-1}(n)}{n^d}=\tfrac1n\cdot(\frac{\log a_{d-1}(n)}{n^{d-1}})\to0\cdot\log\kappa_{d-1}=0$ (Theorem~\ref{thm:existence} applied one dimension down), the left side tends to $\log\kappa_d-0=\log\kappa_d$, giving $\liminf_{n\text{ even}}\frac{\log\lambda_d(n)}{n^{d-1}}\ge\log\kappa_d$. Thus we get that
\begin{equation*}
\lambda_d(n)^{\frac{1}{n^{d-1}}}\ \longrightarrow\ \kappa_d\qquad\text{as }n\to\infty\text{ through even integers.}
\end{equation*}

\textbf{Convergence along odd $n$, via monotonicity.} Let $n\ge3$ be odd, so $n-1$ and $n+1$ are even. By part 3 we have  $\lambda_d(n-1)\le\lambda_d(n)\le\lambda_d(n+1)$, so
\[
\lambda_d(n-1)^{\frac{1}{n^{d-1}}}\ \le\ \lambda_d(n)^{\frac{1}{n^{d-1}}}\ \le\ \lambda_d(n+1)^{\frac{1}{n^{d-1}}}.
\]
For either sign $\varepsilon=\pm1$ we have that $\lambda_d(n+\varepsilon)^{\frac{1}{n^{d-1}}}=\Big[\lambda_d(n+\varepsilon)^{\frac{1}{(n+\varepsilon)^{d-1}}}\Big]^{\frac{(n+\varepsilon)^{d-1}}{n^{d-1}}}$, as $n\to\infty$ (for odd values of $n$), and so $n+\varepsilon\to\infty$ through even integers. Thus, from the fact that $\lambda_d(n)^{\frac{1}{n^{d-1}}}\ \to \kappa_d$ as $n\to\infty$ over even numbers $n$, the base tends to $\kappa_d>0$, while the exponent $\frac{(n+\varepsilon)^{d-1}}{n^{d-1}}\to1$, so the whole expression tends to $\kappa_d^1=\kappa_d$ by continuity. The squeeze theorem gives $\lambda_d(n)^{\frac{1}{n^{d-1}}}\to\kappa_d$ along odd $n$ as well. Combining the even and odd cases, $\lambda_d(n)^{\frac{1}{n^{d-1}}}\to\kappa_d$ as $n\to\infty$ through all integers, as desired.
\end{proof}

\begin{remark}
    For odd values of $m$, the lower bound presented in Equations~\ref{eq:traceupper} and~\ref{eq:tracelower} can genuinely fail. For example, at $d=2$ and $n=3$ we have that $M_2(3)$ has eigenvalues $\approx3.3028,-1,-1,-0.3028$, giving $\tr(M_2(3)^3)=a_2(3)=34$, while $\lambda_2(3)^3\approx36.03$, the naive attempt to bound $\lambda_d(n)^n\le a_d(n)=\tr(M_d(n)^n)$ directly, for the exact exponent $m=n$, is therefore not available in general, and we route around it below using Proposition~\ref{prop:Perron_Frob} instead.
\end{remark}

\begin{remark}\textup{
Unlike the $d=1$ case, algebraicity of $\lambda_d(n)$ for each fixed $n$ does not transfer to $\kappa_d$ for $d\ge2$, since $M_d(n)$ genuinely grows with $n$: Hochman and Meyerovitch characterize the entropies of $\Z^d$ shifts of finite type, $d\ge2$, as exactly the right recursively enumerable numbers \cite{HM2010} - a class vastly larger than the algebraic numbers - whereas entropies of $\Z$-shifts of finite type are always logarithms of weak Perron numbers \cite{Lind1984,PavlovSchraudner2015}, matching Example~\ref{ex:lucas}. Boyd's observations on the range of Mahler measure show more generally that spectral-radius-type sequences carry little algebraicity information on their own \cite{Boyd1981}.}
\end{remark}
\begin{remark}
    The limit presented in item 4 of Proposition~\ref{prop:Perron_Frob} in the case $d=2$, that is $\kappa_2=\lim_{n \to \infty} \lambda_2(n)^{\frac{1}{n}}$, is a classical result that has been used extensively in the approximation of the hard square entropy constant, for example~\cite{CalkinWilf1998}.
\end{remark}

\section{The value of $\nu_p(a_d(p^k)-1)$ and Tame Primes}\label{sec:tame}

We now turn to an arithmetic property of $a_d(n)$ with respect to its prime power values (Theorem~\ref{prop:Gauss}). This condition, which needs no hypothesis on $d$ or on the algebraicity of $\kappa_d$, is the combinatorial result that allows us to turn the  algebraicity problem from a combinatorial one to an arithmetic one. We start with two lemmata:

\begin{lemma}\label{lem:pullback}
Let $d\ge1$,  $m\ge2$, $e\ge1$, and set $n=me$. Let $\pi:(\quot{\Z}{n\Z})^d\to(\quot{\Z}{m\Z})^d$ be coordinate-wise reduction modulo $m$, and for $J\subseteq(\quot{\Z}{m\Z})^d$ let $\pi^{-1}(J)\subseteq(\quot{\Z}{n\Z})^d$ be its full preimage. Then:
\begin{enumerate}
\item $J$ is independent in $T_d(m)$ if and only if $\pi^{-1}(J)$ is independent in $T_d(n)$,
\item $J\mapsto\pi^{-1}(J)$ is a bijection from $\Ind(T_d(m))$ onto the set of independent sets of $T_d(n)$ that are invariant under translation by the subgroup $m\cdot(\quot{\Z}{n\Z})^d\le(\quot{\Z}{n\Z})^d$.
\end{enumerate}
\end{lemma}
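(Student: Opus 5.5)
The plan is to reduce both parts to one elementary observation. The reduction map $\pi$ is a surjective group homomorphism with kernel $m\cdot(\quot{\Z}{n\Z})^d$. It sends each generator $\pm e_i\in(\quot{\Z}{n\Z})^d$ to $\pm e_i\in(\quot{\Z}{m\Z})^d$. So for $x,y\in(\quot{\Z}{n\Z})^d$, if $x-y\in S_n$ then $\pi(x)-\pi(y)\in S_m$; that is, $\pi$ maps edges of $T_d(n)$ to edges of $T_d(m)$. Conversely, every edge of $T_d(m)$ lifts: given $u\sim v$ in $T_d(m)$ with $v-u=\pm e_i$, pick any $x\in\pi^{-1}(u)$ and set $y=x\pm e_i$. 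Then $\pi(y)=v$ and $x\sim y$ in $T_d(n)$.

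One degenerate point must be checked: that $x\neq y$, i.e.\ that $e_i\neq 0$ in $(\quot{\Z}{n\Z})^d$. Since $n=me\ge m\ge2$, this holds. Also $\pi(x)=\pi(y)$ never happens for an edge $x\sim y$, because $m\ge2$ gives $\pm e_i\not\equiv0\pmod m$. So the edge-mapping is genuinely into edges and not loops. When $m=2$, the identification $e_i=-e_i$ in $S_2$ causes no trouble, since we only ever use that $\pi(\pm e_i)\in S_m$.

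For part 1, suppose first that $J$ is independent and $x\sim y$ with $x,y\in\pi^{-1}(J)$. Then $\pi(x)\sim\pi(y)$ are both in $J$, which is a contradiction. Conversely, suppose $\pi^{-1}(J)$ is independent and $u\sim v$ with $u,v\in J$. Lifting the edge as above gives adjacent $x,y\in\pi^{-1}(J)$, again a contradiction.

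For part 2, I will proceed in three steps.
\begin{enumerate}
\item Full preimages are unions of cosets of $\ker\pi=m\cdot(\quot{\Z}{n\Z})^d$, so each $\pi^{-1}(J)$ is invariant under that subgroup.
\item Injectivity follows from surjectivity of $\pi$: we have $\pi(\pi^{-1}(J))=J$.
\item For surjectivity onto invariant independent sets, take an invariant independent $I$ and set $J=\pi(I)$. Invariance under the kernel gives $I=\pi^{-1}(\pi(I))$. Then part 1 shows that $J$ is independent, since its preimage $I$ is.
\end{enumerate}

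I do not expect a real obstacle here. The only care needed is the edge-lifting step and the degenerate small cases $m=2$ or $e=1$, which I handle as noted above.
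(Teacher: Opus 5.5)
Your proposal is correct and follows essentially the same route as the paper. Edges of $T_d(n)$ reduce to genuine edges of $T_d(m)$ because $m\ge2$, edges of $T_d(m)$ lift to edges of $T_d(n)$ because $n\ge2$, and part 2 follows from kernel-coset invariance giving $I=\pi^{-1}(\pi(I))$ together with part 1; your explicit injectivity step via $\pi(\pi^{-1}(J))=J$ is a small detail the paper leaves implicit.
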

\begin{proof}
For part 1,  if $x,y\in\pi^{-1}(J)$ are adjacent in $T_d(n)$, then $x-y\equiv\varepsilon e_i\pmod n$ for some $i$ and $\varepsilon=\pm1$. Reducing mod $m$ gives us that $\pi(x)-\pi(y)\equiv\varepsilon e_i\pmod m$, which cannot be $0$, as that would force $e_i\equiv0\pmod m$, which is  impossible since $m\ge2$. Therefore $\pi(x)\ne\pi(y)$ must be adjacent in $T_d(m)$ and must both lie in $J$, which contradicts the independence of the set $J$.  If $u,v\in J$ are adjacent, then we must have that $u-v\equiv\varepsilon e_i\pmod m$ for some $i,\varepsilon$, and by lifting $u$ to some $x\in\pi^{-1}(u)$, we have that $y=x-\varepsilon e_i$ is a genuine neighbor of $x$ in $T_d(n)$ (as $n\ge2$), and so $\pi(y)\equiv v\pmod m$. Thus we get that $y\in\pi^{-1}(J)$, then $x,y\in\pi^{-1}(J)$ are adjacent, which contradicts the independence of $\pi^{-1}(J)$.\\

For part 2, since $\pi$ is constant on cosets of its kernel, we have that $\pi^{-1}(J)$ is invariant under the action of $\ker\pi=m(\quot{\Z}{n\Z})^d$. Conversely, any independent set $I$ that is invariant under $\ker\pi$ is a union of cosets of $\ker\pi$. Since we have an isomorphism $(\quot{\Z}{n\Z})^d/\ker\pi\cong(\quot{\Z}{m\Z})^d$ via $\pi$, then $I$ equals $\pi^{-1}(\pi(I))$. So by part 1, $\pi(I)\in\Ind(T_d(m))$ exactly when $I=\pi^{-1}(\pi(I))$ is independent.
\end{proof}

\begin{remark}\label{rem:monotone}
Item 2 in Lemma~\ref{lem:pullback} gives us an injection $\Ind(T_d(m))\hookrightarrow \Ind(T_d(n))$ whenever $m\mid n$ and $m\ge2$. Thus we can conclude that $a_d(m)\le a_d(n)$ whenever $2\le m\mid n$.
\end{remark}

\begin{lemma}\label{lem:subgroup}
Let $G=(\quot{\Z}{p^k\Z})^d$ and $K=p^{k-1}G\cong(\quot{\Z}{p\Z})^d$. If $S\le G$ is a subgroup with $K\not\le S$, then $p^k\mid[G:S]$.
\end{lemma}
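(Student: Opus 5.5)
The plan is to show that $S$ must in fact meet $K$ trivially, and then read off the index from the $p$-torsion. First I will identify $K$ as the subgroup of elements of order dividing $p$. Indeed, $G[p]=\{x\in G: px=0\}$ consists of the tuples whose coordinates lie in $p^{k-1}\Z/p^k\Z$, and this is exactly $p^{k-1}G=K$. Consequently, for any subgroup $S\le G$ we have $S[p]=S\cap K$.

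Next I use that $S$ is a finite abelian $p$-group, since it is a subgroup of $G$. Write $S\cong\bigoplus_{i=1}^{r}\Z/p^{c_i}\Z$ with all $c_i\ge1$, where $r$ is the rank of $S$. Then $|S[p]|=p^r$, so $S\cap K$ has order $p^r$. Every element of $G$ has order at most $p^k$, so each $c_i\le k$ and hence $|S|\le p^{kr}$.

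Now suppose $K\not\le S$. Then $S\cap K$ is a proper subgroup of $K\cong(\Z/p\Z)^d$, so $r\le d-1$. Therefore
\[
[G:S]=\frac{p^{kd}}{|S|}\ \ge\ \frac{p^{kd}}{p^{k(d-1)}}=p^k .
\]
Since $[G:S]$ is a power of $p$ that is at least $p^k$, it follows that $p^k\mid[G:S]$.

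The only step needing care is the identification $G[p]=K$ together with $|S[p]|=p^{\mathrm{rank}(S)}$. Both are standard facts from the structure theorem for finite abelian groups, so I do not expect a genuine obstacle. The conceptual point is that the hypothesis $K\not\le S$ forces the rank of $S$ to drop below $d$, and a drop of one in rank loses a full factor of $p^k$ in the order.
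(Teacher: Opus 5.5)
Your proof is correct, but it takes a different route from the paper's.

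\textbf{The paper's route.} It picks $g\in K\setminus S$ and writes $g=p^{k-1}g_0$. Since $g\neq 0$, the element $g_0$ has order exactly $p^k$. Every nontrivial subgroup of the cyclic $p$-group $C=\langle g_0\rangle$ contains its unique order-$p$ subgroup $\langle g\rangle$, and $g\notin S$, so $S\cap C=\{0\}$. Then $[SC:S]=[C:S\cap C]=p^k$, and divisibility follows at once from $[G:S]=[G:SC]\,[SC:S]$.

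\textbf{Your route.} You identify $K$ with the $p$-torsion $G[p]$ and use the structure theorem to get $|S\cap K|=p^{\mathrm{rank}(S)}$ and $|S|\le p^{k\cdot \mathrm{rank}(S)}$. You then turn the resulting inequality $[G:S]\ge p^k$ into divisibility, using that every index in $G$ is a power of $p$.

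\textbf{What each buys.} Your counting gives a sharper statement at no extra cost: if $|S\cap K|=p^r$ then $p^{k(d-r)}\mid[G:S]$, so each dimension of $K$ that $S$ misses costs a full factor $p^k$. The paper's argument is more elementary: it needs only the subgroup chain of a cyclic group and the second isomorphism theorem. It is also more general, since it uses only that $G$ is finite abelian with $p^kG=0$. Your identification $p^{k-1}G=G[p]$, and hence the rank count, relies on $G$ being homocyclic. For example, in $\Z/p^2\Z\times\Z/p\Z$ with $S=0\times\Z/p\Z$, your bound gives only $[G:S]\ge p$. Homocyclicity does hold in the lemma and in its application in Proposition~\ref{thm:full-dold}, so this is no defect here.
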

\begin{proof}
Choose $g\in K\setminus S$ and write $g=p^{k-1}g_0$. Since $g\ne0$, we have that $g_0$ has order exactly $p^k$ (if the order of $g_0$ divided $p^{k-1}$ then $g=p^{k-1}g_0$ would vanish). Let $C=\langle g_0\rangle\cong\quot{\Z}{p^k\Z}$, then $g=p^{k-1}g_0$ has order $\frac{p^k}{\gcd(p^{k-1},p^k)}=p$ in $C$, and generates the unique subgroup of $C$ of order $p$ (cyclic groups have a unique subgroup of each order dividing the group order). Since subgroups of a cyclic $p$-group are totally ordered by inclusion and  since $g\notin S$, then $S\cap C$ (itself a subgroup of $C$) cannot be a nontrivial subgroup of $C$, as every nontrivial subgroup of $C$ contains the minimal one $\langle g \rangle$, therefore $S\cap C=\{0\}$. As $G$ is abelian, all subgroups are normal, so by the second isomorphism theorem $[SC:S]=[C:S\cap C]=|C|=p^k$, and $[G:S]=[G:SC][SC:S]$ is a multiple of $p^k$.
\end{proof}

We are now ready to prove the main theorem of this section, which tells us that $\{a_d(p^k)\}_{k=1}^\infty$ satisfies a special divisibility condition for every prime $p$:

\begin{theorem}\label{prop:Gauss}
For every $d\ge1$ and for every prime $p$ we have that:
\begin{enumerate}
\item $a_d(p)\equiv1\pmod p$,
\item $a_d(p^k)\equiv a_d(p^{k-1})\pmod{p^k}$ for every $k \geq 2$ (equivalently, $\nu_p\big(a_d(p^k)-a_d(p^{k-1})\big)\ge k$).
\end{enumerate}
\end{theorem}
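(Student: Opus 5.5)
We will prove both parts by an orbit-counting argument for the translation action of $G=(\quot{\Z}{p^k\Z})^d$ on $\Ind(T_d(p^k))$. Translations are graph automorphisms of the Cayley graph $T_d(p^k)$, so $G$ permutes $\Ind(T_d(p^k))$. By the orbit–stabilizer theorem, each orbit has size $[G:\Stab_G(I)]$.

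For part 2 ($k\ge2$), set $K=p^{k-1}G$ and split $\Ind(T_d(p^k))$ into two disjoint $G$-invariant pieces:
\begin{enumerate}
\item the sets $I$ with $K\le\Stab_G(I)$;
\item the sets $I$ with $K\not\le\Stab_G(I)$.
\end{enumerate}
Both pieces are $G$-invariant because $G$ is abelian, so stabilizers are constant along orbits. By Lemma~\ref{lem:subgroup}, every orbit in the second piece has size divisible by $p^k$. Hence the second piece has cardinality $\equiv0\pmod{p^k}$.

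The first piece consists of the $K$-invariant independent sets. Here $K=p^{k-1}(\quot{\Z}{p^k\Z})^d$ is exactly the kernel of reduction modulo $m=p^{k-1}$, and $m\ge2$ since $k\ge2$. So Lemma~\ref{lem:pullback}, applied with $m=p^{k-1}$ and $e=p$, puts the first piece in bijection with $\Ind(T_d(p^{k-1}))$, giving it cardinality $a_d(p^{k-1})$. Adding the two pieces yields $a_d(p^k)\equiv a_d(p^{k-1})\pmod{p^k}$.

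For part 1 we cannot use Lemma~\ref{lem:pullback}, since it needs $m\ge2$, and this is the main subtlety. We argue directly with $G=(\quot{\Z}{p\Z})^d$. Every orbit has size a power of $p$, so $a_d(p)$ is congruent modulo $p$ to the number of $G$-fixed independent sets. Since $G$ acts transitively on vertices, a $G$-fixed set is either $\emptyset$ or the whole vertex set. The whole vertex set is not independent when the graph has an edge, which happens for $p\ge2$ and $d\ge1$ (for $p=2$ the graph is the hypercube, which still has edges). Thus exactly one fixed set, $\emptyset$, remains, and $a_d(p)\equiv1\pmod p$.

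Equivalently, Lemma~\ref{lem:subgroup} with $k=1$ says that any subgroup $S$ not containing $K=G$, i.e. any proper subgroup, has index divisible by $p$. This gives the same count.

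The only point needing care is the case check $m\ge2$ in Lemma~\ref{lem:pullback}, together with confirming that stabilizer membership of $K$ is an orbit invariant. Both are immediate, so we expect no real obstacle.
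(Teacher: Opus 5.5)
Your proposal is correct and follows the paper's proof essentially step for step. For $k\ge2$ you split $\Ind(T_d(p^k))$ into the $K$-fixed sets, counted by Lemma~\ref{lem:pullback} with $m=p^{k-1}\ge2$, and their complement, whose orbits have size divisible by $p^k$ by Lemma~\ref{lem:subgroup}; for $k=1$ you use that $\emptyset$ is the unique translation-fixed independent set, with your observation that Lemma~\ref{lem:subgroup} at $k=1$ gives the same count being a harmless unification.
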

\begin{proof}
First, we start with the $k=1$ case.  Let $G=(\quot{\Z}{p\Z})^d$ act on $X=\Ind(T_d(p))$ by (coordinate-wise) translation, this is an action by graph automorphisms of $T_d(p)$, since $G$ acts on the vertex set simply transitively and preserves the generating set $S_p$ of Definition~\ref{def:torus}. A $G$-invariant subset of the vertex set is $\emptyset$ or the whole vertex set (by simple transitivity, a $G$-invariant subset is a union of $G$-orbits, and there is only the trivial orbit and the full orbit), and the whole vertex set is not independent (it contains edges, as $p\ge2$ and $d\ge1$), so $\emptyset$ is the unique $G$-fixed point of $X$. As $G$ is abelian, $\Stab_G(I)$ is constant along each orbit (conjugate stabilizers of points in the same orbit coincide when the acting group is abelian), so every non-fixed orbit has size $[G:\Stab_G(I)]$, a proper (i.e.\ nontrivial) divisor of $|G|=p^d$, thus must be a positive power of $p$. Summing orbit sizes,
\[
a_d(p)\ =\ 1\ +\ \sum_{\text{nontrivial orbits }}|\textup{Orbit}|\ \equiv\ 1\pmod p.
\]

We now turn to the $k\ge2$ case. Let $G=(\quot{\Z}{p^k\Z})^d$ act on $X=\Ind(T_d(p^k))$ by translation, and let $K=p^{k-1}G\cong(\quot{\Z}{p\Z})^d$. By Lemma~\ref{lem:pullback} with $m=p^{k-1}\ (\ge2$, as $k\ge2$), $e=p$, the map $J\mapsto\pi^{-1}(J)$ is a bijection $\Ind(T_d(p^{k-1}))\to\Fix_K(X)$, so $|\Fix_K(X)|=a_d(p^{k-1})$, here $\Fix_K(X)$ is $G$-invariant as a set, since $G$ is abelian (so $K\trianglelefteq G$, and therefore  for $x\in\Fix_K(X)$ together with $g\in G$ and with $k\in K$ we have that $k(gx)=g(kx)=gx$), and therefore decomposes into full $G$-orbits, as does its complement. If $x\notin\Fix_K(X)$, then $K\not\le\Stab_G(x)$, so by Lemma~\ref{lem:subgroup} the orbit of $x$ has size $[G:\Stab_G(x)]$ divisible by $p^k$. Thus we get that
\[
a_d(p^k)-a_d(p^{k-1})\ =\ \sum_{\text{orbits }\not\subseteq\Fix_K(X)}|\textup{Orbit}|\ \equiv\ 0\pmod{p^k},
\]
and the result follows.  
\end{proof}

% \begin{theorem}[Main congruence]
% For every $d\ge1$, every prime $p$, and every $k\ge1$, $\nu_p\big(a_d(p^k)-a_d(p^{k-1})\big)\ge k$, where for $k=1$ this is read as  (Proposition~\ref{prop:Gauss}(a)). 
% \end{theorem}
% \begin{proof}
% Immediate from Proposition~\ref{prop:Gauss}, combining (a) (the case $k=1$) and (b) (the case $k\ge2$).
% \end{proof}

\begin{corollary}\label{cor:p_mod_1}
For every prime $p$, every $k \geq 1$, and every $d$ we have that  $a_d(p^k) \equiv 1 \pmod p$.
\end{corollary}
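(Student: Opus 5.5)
The plan is to argue by induction on $k$, using only Theorem~\ref{prop:Gauss}. For $k=1$ the statement is exactly item 1 of Theorem~\ref{prop:Gauss}, which gives $a_d(p)\equiv 1\pmod p$.

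For the inductive step, fix $k\ge2$ and assume $a_d(p^{k-1})\equiv1\pmod p$. Item 2 of Theorem~\ref{prop:Gauss} gives $p^k\mid a_d(p^k)-a_d(p^{k-1})$. Since $p\mid p^k$, this yields $a_d(p^k)\equiv a_d(p^{k-1})\pmod p$, and combining with the inductive hypothesis gives $a_d(p^k)\equiv1\pmod p$.

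Alternatively, one can telescope directly: $a_d(p^k)-1=\sum_{j=2}^{k}\bigl(a_d(p^j)-a_d(p^{j-1})\bigr)+\bigl(a_d(p)-1\bigr)$. Every summand is divisible by $p$, the $j$-th one even by $p^j$.

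There is no genuine obstacle here; the only point requiring care is the edge cases. For $d\ge1$ both items of Theorem~\ref{prop:Gauss} apply for all $k\geq1$, with no restriction on $p$, including $p=2$, where $T_d(2)$ is the hypercube. If $d=0$ is also intended, the convention $a_0(n)=2$ gives $a_0(p^k)=2$, so the claim holds only for $p=2$. Accordingly, I would read the statement as quantifying over $d\ge1$, consistent with Theorem~\ref{prop:Gauss}.
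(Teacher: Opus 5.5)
Your proof is correct and is the paper's argument: the base case is item 1 of Theorem~\ref{prop:Gauss}, and the inductive step reduces the congruence modulo $p^k$ from item 2 to a congruence modulo $p$. One slip in your aside: for $d=0$, $a_0(p^k)=2\not\equiv 1 \pmod p$ for \emph{every} prime $p$ (for $p=2$ we get $2\equiv 0$), so the claim fails for all $p$, not all but $p=2$, which only reinforces your reading of the statement as $d\ge1$.
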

\begin{proof}
For $k=1$ this is part 1 of Theorem~\ref{prop:Gauss}. For $k\ge2$, theorem~\ref{prop:Gauss} gives us that $a_d(p^k)\equiv a_d(p^{k-1})\pmod{p^k}$, in particular $\pmod p$ (as $k\ge1$). By induction on $k$, we have that $a_d(p^k)\equiv a_d(p^{k-1})\equiv\cdots\equiv a_d(p)\equiv1\pmod p$ and the result follows.
\end{proof}

\begin{corollary}\label{cor:zeta-primepower}
For every $d\ge1$, every prime $p$, and every $k\ge2$, we have  $p^k\mid\sum_{j \leq k}\mu(p^{k-j})a_d(p^j)$.
\end{corollary}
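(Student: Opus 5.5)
The plan is to reduce the sum to the congruence of Theorem~\ref{prop:Gauss}. The M\"obius function $\mu(p^{k-j})$ vanishes unless $k-j\in\{0,1\}$. It equals $1$ for $j=k$ and $-1$ for $j=k-1$. So I will first make the range of summation explicit. Reading the sum as running over $1\le j\le k$, or over $0\le j\le k$ with $a_d(p^0)=a_d(1)$, makes no difference, because every term with $j\le k-2$ is killed by $\mu(p^{k-j})=0$; here one uses $k\ge2$, so that $j=k-1\ge1$ is still a genuine prime-power index. The sum therefore collapses to
\[
\sum_{j\le k}\mu(p^{k-j})\,a_d(p^j)\;=\;a_d(p^k)-a_d(p^{k-1}).
\]

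The second step is to invoke part 2 of Theorem~\ref{prop:Gauss} with this $k\ge2$. It gives $a_d(p^k)\equiv a_d(p^{k-1})\pmod{p^k}$, which is exactly the claim $p^k\mid a_d(p^k)-a_d(p^{k-1})$.

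There is no real obstacle here, since the corollary is a restatement of the Gauss--Dold form of the congruence. The only point needing care is the indexing convention. If the sum were allowed to include $j=0$ with $k=1$, the term $a_d(1)=2$ would enter. This is precisely why the statement restricts to $k\ge2$, where $\mu(p^k)=0$ removes that term.
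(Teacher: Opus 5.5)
Your proposal is correct and matches the paper's proof essentially line for line. Both collapse the M\"obius sum to $a_d(p^k)-a_d(p^{k-1})$, since $\mu(p^{k-j})=0$ for $k-j\ge2$, then apply part 2 of Theorem~\ref{prop:Gauss}, and your explanation of why $k\ge2$ is needed agrees with Remark~\ref{rem:k1exception}.
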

\begin{proof}
The divisors of $p^k$ are $1,p,\dots,p^k$, and $\mu(\frac{p^k}{p^j})$ vanishes unless $k-j\in\{0,1\}$, where it equals $1$ and $-1$ respectively, so $\sum_{j \leq k}\mu(p^{k-j})a_d(p^j)=a_d(p^k)-a_d(p^{k-1})$, and since $k\ge2$, we have that $p^{k-1}$ is itself a prime power with exponent $\ge1$, so no boundary convention is involved. Theorem~\ref{prop:Gauss} shows this difference is divisible by $p^k$.
\end{proof}

\begin{remark}\label{rem:k1exception}
The exponent restriction $k\ge2$ in Corollary~\ref{cor:zeta-primepower} is not a mere technicality. At $k=1$ we have that $\sum_{r\mid p}\mu(\frac{p}{r})a_d(r)=a_d(p)-a_d(1)=a_d(p)-2$, using the actual value $a_d(1)=2$ (as in Definition~\ref{def:torus}), and since $a_d(p)\equiv1\pmod p$ unconditionally (from the first item of Proposition~\ref{prop:Gauss}), $a_d(p)-2\equiv-1\pmod p$, which is never $0$. So $\frac{a_d(p)-2}{p}\notin\Z$ for every prime $p$ and every $d\ge1$. %integrality of $\zeta_d$'s coefficients genuinely fails already at $n=p$, and only recovers (at prime-power indices) from $k=2$ onward. This is the same single-vertex convention responsible for the $e^x$ defect in $\zeta_1$ noted above, both are traceable to the fact that the natural "fixed-point count" in the orbit-counting proof of Proposition~\ref{prop:Gauss}(a) is $1$ (the single fixed point $\emptyset$ under the full group action), not $a_d(1)=2$.
\end{remark}

A natural question that arises is whether Corollary~\ref{cor:zeta-primepower} is true not just for prime powers. This question leads us to the following definition:

\begin{definition}
For every $d \geq 1$ and for every $n \geq 1$, we set $N_d(n)=\tfrac1n\sum_{r\mid n}\mu(\frac{n}{r})a_d(r)\in\Q$.
\end{definition}

Therefore, we would like to know whether $N_d(n)$ is an integer. The following lemma relates the integrality of $N_d(n)$ to the integrality of a certain power series:

\begin{lemma}\label{lem:zetaformal}
 The formal power series $\zeta_d(x)=\prod_{n\ge1}(1-x^n)^{-N_d(n)}$ has integer coefficients (i.e. $\zeta_d (x) \in \Z[[x]]$) if and only if $N_d(n)\in\Z$ for every $n\ge1$.
\end{lemma}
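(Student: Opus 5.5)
The plan is to first make sense of $\zeta_d(x)$ as a formal power series with rational coefficients, and then prove both implications separately.

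\textbf{Well-definedness.} For each $n$, write $(1-x^n)^{-N_d(n)}=\exp\bigl(N_d(n)\sum_{k\ge1}x^{nk}/k\bigr)$, or equivalently use the binomial series $\sum_{j\ge0}\binom{-N_d(n)}{j}(-x^n)^j$ with rational coefficients. This factor is $1+O(x^n)$. Hence the infinite product converges $x$-adically in $\Q[[x]]$, and its coefficient of $x^m$ depends only on the factors with $n\le m$.

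\textbf{The direction ``$N_d(n)\in\Z$ for all $n$ $\Rightarrow$ $\zeta_d\in\Z[[x]]$''.} For an integer $N$, the series $(1-x^n)^{-N}$ lies in $\Z[[x]]$. If $N\ge0$, it is a product of $N$ copies of the geometric series $\sum_j x^{nj}$. If $N<0$, it is the polynomial $(1-x^n)^{|N|}$. Each coefficient of $\zeta_d$ is therefore a coefficient of a finite product of integral series, so it is an integer.

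\textbf{The converse.} Here I would argue by strong induction on $n$ that $N_d(n)\in\Z$. Assume $N_d(r)\in\Z$ for all $r<n$. Put
\[
P(x)=\prod_{r<n}(1-x^r)^{-N_d(r)}\in\Z[[x]].
\]
This series has constant term $1$, so it is a unit in $\Z[[x]]$, and $P^{-1}\in\Z[[x]]$. Then
\[
\zeta_d(x)P(x)^{-1}=\prod_{r\ge n}(1-x^r)^{-N_d(r)}\equiv 1+N_d(n)x^n \pmod{x^{n+1}},
\]
because every factor with $r>n$ is $\equiv1\pmod{x^{n+1}}$, and the factor with $r=n$ contributes $1+N_d(n)x^n+O(x^{2n})$. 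The left-hand side lies in $\Z[[x]]$, so comparing coefficients of $x^n$ gives $N_d(n)\in\Z$. The base case $n=1$ is the same computation with $P=1$: the coefficient of $x$ in $\zeta_d$ is exactly $N_d(1)$.

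\textbf{Main point of care.} The only step needing attention is the formal bookkeeping. We must check that the product is well defined when the exponents are rational, and that truncating modulo $x^{n+1}$ is legitimate. Both follow from the $x$-adic convergence noted at the start. Nothing specific to $a_d$ is used, so the lemma holds for any rational sequence $N_d(n)$.
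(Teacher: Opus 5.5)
Your proof is correct and takes the same approach as the paper: integrality of each factor $(1-x^n)^{-N}$ for integer $N$ gives one direction, and an induction matching the coefficient of $x^n$ gives the converse. Dividing by the unit $P(x)$ makes that induction more explicit than the paper's version. The paper also identifies the product with $\exp\bigl(\sum_n a_d(n)x^n/n\bigr)$ via M\"obius inversion; the lemma as stated does not need this, but Proposition~\ref{thm:zeta} relies on it later.
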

\begin{proof}
By M\"obius inversion we have that $a_d(n)=\sum_{e\mid n}eN_d(e)$, and so by substituting and reindexing the double sum $\sum_{n}a_d(n)\frac{x^n}{n}=\sum_n\sum_{e\mid n}eN_d(e)\frac{x^n}{n}$ by writing $n=ej$, we get that it equals $\sum_{e\geq 1}\sum_{j\ge1}N_d(e)\frac{x^{ej}}{j}=-\sum_eN_d(e)\log(1-x^e)$, so $\zeta_d(x)=\exp\big(-\sum_eN_d(e)\log(1-x^e)\big)=\prod_e(1-x^e)^{-N_d(e)}$ as formal power series. If $N_d(n)\in\Z$ for every $n$, each factor $(1-x^n)^{-N_d(n)}$ lies in $\Z[[x]]$ (a polynomial if $N_d(n)<0$, and an integer binomial series if $N_d(n)\ge0$), so $\zeta_d\in\Z[[x]]$. Conversely, if $\zeta_d\in\Z[[x]]$, by matching coefficients of $x^n$ on both sides of $\zeta_d(x)=\prod_e(1-x^e)^{-N_d(e)}$, in increasing order of $n$, we can determine $N_d(n)$ inductively as an integer combination of the (integer) Taylor coefficients of $\zeta_d$ and of $N_d(1),\dots,N_d(n-1)$, and so $N_d(n)\in\Z$.
\end{proof}

\begin{remark}\label{rem:Arnold}
For a fixed integer matrix $A$, the identity $\log\det(I-xA)=-\sum_n\tr(A^n)\frac{x^n}{n}$ gives us the equality $\exp\big(\sum_n\tr(A^n)\frac{x^n}{n}\big)=\det(I-xA)^{-1}$, which is the mechanism behind the Arnold-Zarelua congruence (see~\cite{Arnold2006,Zarelua2006,Zarelua2008}) for $\tr(A^n)$, and also behind the classical fact that $N_d(n)\in\Z$ automatically whenever $a_d(n)=\tr(A^n)$ for every $n\ge1$, with a single matrix $A$, independent of $n$.% In our case at $d=1$ we have that  $M_1(n)=M=\begin{bsmallmatrix}1&1\\1&0\end{bsmallmatrix}$ is indeed independent of $n$ (as in Example~\ref{ex:lucas}), and $\tr(M^n)=L_n$ for every $n\ge1$ (including $n=1$: $\tr(M)=1=L_1$), but this is not quite $a_1(n)$, since $a_1(1)=2\ne L_1=1$ by the single-vertex convention of Definition~\ref{def:torus} (which is why Proposition~\ref{prop:trace} is stated only for $n\ge2$). Writing $a_1(n)=L_n+\delta_{n,1}$, we get $\sum_na_1(n)x^n/n=\sum_nL_nx^n/n+x=-\log\det(I-xM)+x=-\log(1-x-x^2)+x$, so
%\[
%\zeta_1(x)\ =\ \frac{e^x}{1-x-x^2},
%\]
%not the naively expected $\det(I-xM)^{-1}=1/(1-x-x^2)$ (the Fibonacci generating function $\sum_nF_{n+1}x^n$): the single-vertex convention $a_d(1)=2$ contributes an extra factor $e^x$, and consequently $\zeta_1$ is itself already transcendental as a function (though with the same finite radius of convergence $1/\varphi$ as $1/(1-x-x^2)$, since $e^x$ is entire) - the "fixed matrix $\Rightarrow$ rational $$" mechanism does not literally apply even at $d=1$, once one uses the paper's own indexing convention. Proposition~\ref{thm:zeta} shows the situation only worsens for $d\ge2$: there, $M_d(n)$ genuinely grows with $n$ (Definition~\ref{def:transfer}), so there is not even a candidate fixed matrix for the Arnold-Zarelua mechanism to apply to, and $\zeta_d$ has radius of convergence $0$ (an even more severe failure than the mere transcendence of $\zeta_1$). }
\end{remark}

As we saw in Remark~\ref{rem:k1exception}, we have a problem with integrality of $N_d(n)$ in the case where $n=p$ is prime since $a_d(1)=2$ and not $1$. Therefore, one might ask whether changing this first value solves the problem. For this goal, we define the sequence $\{\widehat a_d(n)\}_{n=1}^\infty$ by setting $\widehat a_d(1)=1$ and $\widehat a_d(n)=a_d(n)$ for every $n>1$. Similarly, we set $\widehat N_d(n)=\frac1n\sum_{e\mid n}\mu(\frac{n}{e})\widehat a_d(e)$ and $\widehat\zeta_d(x) =
\prod_{n\ge1}(1-x^n)^{-\widehat N_d(n)}$. The following proposition, in fact,  tells us that $\{\widehat a_d(n)\}_{n=1}^\infty$ satisfies an even stronger result than Corollary~\ref{cor:zeta-primepower}:

\begin{proposition}\label{thm:full-dold}
Let $d\ge1$ and let $n\ge2$. Then for every prime $p \mid n$  we have that 
\begin{enumerate}
    \item $\widehat a_d(n)\equiv\widehat a_d(\frac{n}{p})\pmod{p^{r}}$, where $r={\nu_p(n)}$. 
    \item $P_d(n)=\sum_{e\mid n}\mu(\frac{n}{e})\widehat a_d(e)
\in n\Z_{\ge0}$. 
\end{enumerate}
\end{proposition}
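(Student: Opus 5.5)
Both items will come from one orbit count: the translation action of $G=(\quot{\Z}{n\Z})^d$ on $X=\Ind(T_d(n))$, generalizing the proof of Theorem~\ref{prop:Gauss} from prime powers to arbitrary $n$.

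The key observation is that, because the hard-core condition is encoded by translations along the diagonal, the relevant action is really that of the cyclic group $\quot{\Z}{n\Z}$ embedded diagonally. Concretely, let $t=(1,\dots,1)\in G$ and $C=\langle t\rangle\cong\quot{\Z}{n\Z}$.

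\textbf{Fixed points of $\langle t^e\rangle$.} For $e\mid n$, I want to show that the sets in $X$ fixed by $\langle t^{e}\rangle$ are counted by $\widehat a_d(e)$. This does not follow directly from Lemma~\ref{lem:pullback}, which handles invariance under the full subgroup $e(\quot{\Z}{n\Z})^d$ and gives $a_d(e)$. The cleaner route is to work with that full subgroup instead. So for $e\mid n$, put $H_e=e\cdot G$. By Lemma~\ref{lem:pullback}, $|\Fix_{H_e}(X)|=a_d(e)$ when $e\ge2$. When $e=1$, $H_1=G$, and the only fixed set is $\emptyset$, so the count is $1=\widehat a_d(1)$. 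This is exactly where the convention $\widehat a_d(1)=1$ enters, as anticipated in Remark~\ref{rem:k1exception}.

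\textbf{Item 1.} Fix $p\mid n$ with $r=\nu_p(n)$, and take $K=H_{n/p}=\tfrac np G$. Then $\Fix_K(X)$ is $G$-stable, and $|\Fix_K(X)|=\widehat a_d(n/p)$. For $x\notin\Fix_K(X)$, I need $p^r\mid[G:\Stab_G(x)]$. Here I adapt Lemma~\ref{lem:subgroup}: take the Sylow decomposition $G=G_p\times G_{p'}$ with $G_p\cong(\quot{\Z}{p^r\Z})^d$. The subgroup $K$ has order $p^d$ and equals $p^{r-1}G_p$. A stabilizer $S$ not containing $K$ has $S_p\not\ge p^{r-1}G_p$. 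Lemma~\ref{lem:subgroup} applied in $G_p$ then gives $p^r\mid[G_p:S_p]$, and this divides $[G:S]$. Summing orbit sizes yields $\widehat a_d(n)-\widehat a_d(n/p)\equiv0\pmod{p^r}$.

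\textbf{Item 2.} Define $Y\subseteq X$ to be the independent sets whose stabilizer $S$ satisfies $S\cap H_e\neq$ \ldots, which is awkward. Instead I use the standard Dold argument via item 1. Write $n=p^r m$ with $p\nmid m$. Then
\[
P_d(n)=\sum_{e\mid m}\mu(m/e)\bigl(\widehat a_d(p^re)-\widehat a_d(p^{r-1}e)\bigr),
\]
and each bracket is divisible by $p^{r}$ by item 1, applied with $n'=p^re$, $\nu_p(n')=r$. Since this holds for every $p\mid n$, we get $n\mid P_d(n)$.

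\textbf{Nonnegativity.} This is the main obstacle, since the congruences alone give no sign. I plan a Möbius/inclusion–exclusion count. Let $Z_e=\Fix_{H_e}(X)$, so $|Z_e|=\widehat a_d(e)$ and $Z_e\cap Z_{e'}=Z_{\gcd(e,e')}$, because $H_e+H_{e'}=H_{\gcd(e,e')}$. Then $P_d(n)=\sum_{e\mid n}\mu(n/e)|Z_e|$ equals the number of $x\in X$ lying in no $Z_{n/q}$ for any prime $q\mid n$. This is the usual inclusion–exclusion over the lattice, which is Boolean on squarefree divisors, and it is a nonnegative count. If time permits, I would also note that this set is a union of $G$-orbits each of size divisible by every $p^{\nu_p(n)}$, which recovers divisibility by $n$ independently.
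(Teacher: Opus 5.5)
Your proof is correct. Item 1 matches the paper's proof: you use the same $K=\tfrac np G=p^{r-1}G_p$, the same fixed-point count via Lemma~\ref{lem:pullback} with the $e=1$ special case, and Lemma~\ref{lem:subgroup} applied in the Sylow factor.

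For item 2 the two proofs diverge in how divisibility is obtained. The paper lets $r(I)$ be the exponent of $G/\Stab_G(I)$ and observes that $eG\subseteq\Stab_G(I)$ iff $r(I)\mid e$. This gives $\widehat a_d(e)=\sum_{r\mid e}Q_d(r;n)$, and Möbius inversion identifies $P_d(n)$ with the number of configurations whose exponent is exactly $n$. Both nonnegativity and divisibility then follow at once, since each such orbit has order divisible by its exponent $n$.

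You identify the same set (configurations lying in no $Z_{n/q}$) by inclusion--exclusion over the $Z_e$, which gives nonnegativity. You then get $n\mid P_d(n)$ separately, from the Möbius splitting $P_d(n)=\sum_{e\mid m}\mu(m/e)\bigl(\widehat a_d(p^re)-\widehat a_d(p^{r-1}e)\bigr)$ together with item 1. The paper's route is shorter and needs no appeal to item 1. Yours shows that divisibility is a formal consequence of the prime-power congruences, and would hold for any sequence satisfying them; only the sign needs the combinatorial model. Your closing remark about orbit sizes in the primitive set is essentially the paper's argument.

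Delete the opening paragraph about the diagonal subgroup $\langle t\rangle$. The assertion that $\langle t^e\rangle$-fixed sets are counted by $\widehat a_d(e)$ is false for $d\ge2$. For $d=2$, $n=2$, $e=1$, the set $\{(0,0),(1,1)\}$ is independent in $T_2(2)$ and fixed by $t=(1,1)$, so there are more than $\widehat a_d(1)=1$ such sets. Nothing in the hard-core constraint singles out the diagonal. You never use this claim, so it is not a gap in the argument, but as written it states something wrong. Also remove the abandoned sentence defining $Y$.
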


\begin{proof}
As in Theorem~\ref{prop:Gauss}, let $G_n=(\sfrac{\Z}{n\Z})^d$ act by translations on
$X_n=\Ind(T_d(n))$. Write $n=pm$ and consider $K=mG_n\cong(\sfrac{\Z}{p\Z})^d$. If $m\ge2$, then from Lemma~\ref{lem:pullback} we have an isomorphism $\Fix_K(X_n)\cong X_m$ of $G_n$-spaces, so $|\Fix_K(X_n)|=a_d(m)=\widehat a_d(m)$. If $m=1$, then $K=G_n$ and the only $G_n$-invariant independent set is the empty configuration, so again $|\Fix_K(X_n)|=1=\widehat a_d(1)$. \\

Since $G_n$ is abelian, then the complement $X_n \setminus \Fix_K(X_n)$ is a union of $G_n$-orbits. Let $P$ be the $p$-Sylow subgroup of $G_n$. Then $P\cong(\sfrac{\Z}{p^r\Z})^d$, and up to multiplication by a unit, we have that  $K=p^{r-1}P$. For $I\notin X_n^K$ we have that $K\nsubseteq\Stab_{G_n}(I)$, and by applying Lemma~\ref{lem:subgroup} to $P$ and $P\cap\Stab_{G_n}(I)$, we can conclude that $p^r\mid[P:P\cap\Stab_{G_n}(I)]$.  This index divides $[G_n:\Stab_{G_n}(I)]$, the size of the $G_n$-orbit of $I$. Therefore every orbit in the complement has cardinality divisible by $p^r$, and therefore we can conclude that $|X_n|-|X_n^K|\equiv0\pmod{p^r}$, which gives us item 1.\\

For item 2, fix some $n$. For $I\in X_n$, let $r(I)$ be the exponent of $\sfrac{G_n}{\Stab_{G_n}(I)}$ (i.e. the smallest $l \in \mathbb{N}$ such that $g^l \in \Stab_{G_n}(I)$ for every $g \in G_n$). Note that $r(I)$ is always a divisor of $n$ and for every $e\mid n$ we have that $eG_n\subseteq\Stab_{G_n}(I)$ if and only if $r(I)\mid e$. In addition, the number of configurations fixed by $eG_n$ is $\widehat a_d(e)$, since for $e\ge2$ these are precisely pullbacks from $T_d(e)$, and for $e=1$ only the empty configuration is fixed. Therefore, if we set $Q_d(r,n)$ to be the number of configurations $I\in X_n$ with $r(I)=r$, then from the previous argument we get that $\widehat a_d(e)=\sum_{r\mid e}Q_d(r;n)$.  Thus, by performing the Möbius inversion at $e=n$, we can conclude that
\[
Q_d(n,n)=\sum_{e\mid n}\mu\left(\frac{n}{e}\right)\widehat a_d(e)=P_d(n).
\]
Partition the configurations counted by $Q_d(n,n)$ into translation orbits. Such an orbit has cardinality $|G/\Stab_G(I)|$. The exponent of this quotient is $n$, and the exponent of a finite group divides its order. Therefore every such orbit has cardinality divisible by $n$. Therefore $P_d(n)=Q_d(n,n)\in n\Z_{\ge0}$ as desired. 
\end{proof}

\begin{remark}
    Proposition~\ref{thm:full-dold} tells us that $\{\widehat a_d(n)\}_{n=1}^\infty$ is a nonnegative Dold sequence. Dold sequences arise in many fields and their relation to periodic-point counts is surveyed in \cite{byszewski2021dold}.
\end{remark}

\begin{remark}\label{rem:gener_action}
    The proof of Theorem~\ref{prop:Gauss} and of Proposition~\ref{thm:full-dold} can be used to prove a more general observation: Let $G_k=(\quot{\Z}{p^k\Z})^d$ and let $X_k$ be a finite $G_k$-set such that $X_k^{p^{k-1}G_k}$ can be identified with $X_{k-1}$ (as a $G_{k-1}$-set). Then $|X_k| \equiv |X_{k-1}| \mod p^k$ for every $k$. In addition, we can show that if $X\subseteq\mathcal A^{\Z^d}$ is a finite-alphabet shift of finite type, and $b_X(n)$ denotes the number of points fixed by $n\Z^d$, then $\sum_{e\mid n}\mu(n/e)b_X(e)\in n\Z_{\ge0}$ for every $n\ge1$. 
\end{remark}

Proposition~\ref{thm:full-dold} settles  the integrality question of $N_d(n)$:

\begin{corollary}[Canonical integral zeta function]\label{cor:integral-zeta}
For every $d \geq 1$ and $n \geq 1$ we have that 
\begin{enumerate}
    \item $\widehat N_d(n)\in\Z_{\ge0}$. 
    \item $\widehat\zeta_d(x)
\in\Z[[x]]$. 
    \item $\zeta_d(x)=e^x\widehat\zeta_d(x)$.
    \item $N_d(n)=\widehat N_d(n)+\frac{\mu(n)}n$. In particular, $N_d(n)\in\Z$ if and only if $n$ is either $1$ or not squarefree.
\end{enumerate}
\end{corollary}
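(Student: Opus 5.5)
Items 1 and 2 follow almost at once from Proposition~\ref{thm:full-dold}, and items 3 and 4 come from comparing the sequences $a_d$ and $\widehat a_d$, which differ only at $n=1$.

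\textbf{Item 1.} Write $\widehat N_d(n)=\frac{1}{n}P_d(n)$. For $n\ge2$, item 2 of Proposition~\ref{thm:full-dold} gives $P_d(n)\in n\Z_{\ge0}$, hence $\widehat N_d(n)\in\Z_{\ge0}$. The case $n=1$ is handled directly: $\widehat N_d(1)=\widehat a_d(1)=1$. Alternatively, the proof of Proposition~\ref{thm:full-dold} already works for $n=1$, since it gives $Q_d(1,1)=1$.

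\textbf{Item 2.} Lemma~\ref{lem:zetaformal} is stated for $\zeta_d$, but its proof only uses the Möbius relation between the sequence and its $N$-values. I will rerun the same formal computation with hats to get
\[
\widehat\zeta_d(x)=\exp\Big(\sum_n \widehat a_d(n)\tfrac{x^n}{n}\Big)=\prod_n(1-x^n)^{-\widehat N_d(n)}.
\]
By item 1, each factor is an integer binomial series in $\Z[[x]]$, and the infinite product converges formally because the $n$-th factor is $1+O(x^n)$. So $\widehat\zeta_d(x)\in\Z[[x]]$.

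\textbf{Item 3.} Since $a_d(n)-\widehat a_d(n)=\delta_{n,1}$, we get $\sum_n a_d(n)\frac{x^n}{n}=\sum_n\widehat a_d(n)\frac{x^n}{n}+x$. Exponentiating, and using the exponential form of $\zeta_d$ from the proof of Lemma~\ref{lem:zetaformal}, gives $\zeta_d=e^x\widehat\zeta_d$.

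\textbf{Item 4.} By linearity of Möbius inversion,
\[
N_d(n)-\widehat N_d(n)=\frac1n\sum_{e\mid n}\mu\!\left(\frac ne\right)\delta_{e,1}=\frac{\mu(n)}{n}.
\]
Since $\widehat N_d(n)\in\Z$, we have $N_d(n)\in\Z$ if and only if $\frac{\mu(n)}{n}\in\Z$.
\begin{itemize}
\item For $n=1$ this holds, since $\frac{\mu(1)}{1}=1$.
\item For $n\ge2$ non-squarefree, $\mu(n)=0$, so it holds.
\item For $n\ge2$ squarefree, $\frac{\mu(n)}{n}=\pm\frac1n\notin\Z$, so it fails.
\end{itemize}

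The only step needing care is item 2: checking that the formal identity of Lemma~\ref{lem:zetaformal} transfers verbatim to the hatted sequence, and that the infinite product is well defined in $\Z[[x]]$. Both are routine.
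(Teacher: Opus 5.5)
Your proposal is correct and follows the paper's proof: items 1 and 2 come from Proposition~\ref{thm:full-dold} together with the hatted analogue of Lemma~\ref{lem:zetaformal}, and items 3 and 4 come from $a_d$ and $\widehat a_d$ differing by exactly $1$ at $n=1$. Your explicit handling of $n=1$ and of the formal convergence of the product is extra care the paper omits. One side remark is wrong, though harmless: the proof of Proposition~\ref{thm:full-dold} does not extend to $n=1$, because $T_d(1)$ has no edges, so $\Ind(T_d(1))=\{\emptyset,\{0\}\}$ and that argument would give $Q_d(1,1)=2=a_d(1)$ rather than $\widehat a_d(1)=1$; your direct computation $\widehat N_d(1)=\widehat a_d(1)=1$ is the right way to handle this case.
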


\begin{proof}
Items 1 and 2 follow from Proposition~\ref{thm:full-dold} (together with an analogue of the proof of Lemma~\ref{lem:zetaformal}). Since $a_d$ and $\widehat{a}_d$ agree for every $n>1$ and differ  by exactly one at $n=1$, the exponential generating series differ by $x$, giving us item 3. The same observation gives us the equality $ N_d(n)-\widehat N_d(n)=\frac{\mu(n)}n$. Finally, if $n>1$ is squarefree, then $\mu(n)=\pm1$, so this number is not an integer. If $n$ is not squarefree, then $\mu(n)=0$.
\end{proof}

%Lemma~\ref{lem:zetaformal} reduces the question of whether $\zeta_d\in\Z[[x]]$ to whether $N_d(n)\in\Z$ for every $n$, equivalently whether $n\mid\sum_{e\mid n}\mu(n/e)a_d(e)$ for every $n$. This is precisely the kind of statement proved, unconditionally, at prime-power $n=p^k$ with $k\ge2$ by Theorem~\ref{prop:Gauss} below (see Corollary~\ref{cor:zeta-primepower}) - but, as Remark~\ref{rem:k1exception} shows, it is genuinely false at $k=1$, whether it holds once $n$ has two or more distinct prime factors is a separate question our methods do not resolve, and none of our results depend on it. 

The following proposition tells us that even though $\zeta_d$ contains information about the arithmetic properties of $a_d(n)$, we cannot extract analytic information by regarding $\zeta_d$ as an analytic  function on an Archimedean domain:

\begin{proposition}\label{thm:zeta}
For $d\ge2$ the function $\zeta_d(x)$ has archimedean radius of convergence $0$. In particular, $\zeta_d$ is not a rational function of $x$.
\end{proposition}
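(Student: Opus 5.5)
Since $\zeta_d(x)=\exp\big(\sum_{n\ge1}a_d(n)\frac{x^n}{n}\big)$ as formal power series (Lemma~\ref{lem:zetaformal}), I will first show that the logarithmic series $L_d(x)=\sum_{n\ge1}a_d(n)\frac{x^n}{n}$ has radius of convergence $0$, and then transfer this to $\zeta_d$. By Theorem~\ref{thm:existence}, $a_d(n)\ge\kappa_d^{n^d}2^{-dn^{d-1}}$. Using Lemma~\ref{lem:naive_bound}, this gives $\log a_d(n)\ge n^d\log\sqrt2-dn^{d-1}\log2$. Hence $\big(a_d(n)/n\big)^{1/n}\ge\exp\big(n^{d-1}\tfrac{\log2}{2}-dn^{d-2}\log2-\tfrac{\log n}{n}\big)\to\infty$ when $d\ge2$, since the leading term $n^{d-1}$ dominates $n^{d-2}$. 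By Cauchy--Hadamard, $L_d$ has radius $0$.

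To pass to $\zeta_d$, I will argue by contradiction. Suppose $\zeta_d$ had positive radius $R$. Its constant term is $1$, so $\zeta_d$ is nonvanishing on a small disc $|x|<r$, and there it has a holomorphic logarithm $g$ with $g(0)=0$. The formal identity $\zeta_d'/\zeta_d=L_d'$ holds because $\zeta_d=\exp(L_d)$ formally. The left side is a quotient of convergent series with nonzero constant denominator, so it converges near $0$. Thus $L_d'$, and therefore $L_d$, would have positive radius, which is a contradiction.

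An alternative would be to use the coefficient identity $n c_n=\sum_{k=1}^n a_d(k)c_{n-k}$ directly. However, the logarithmic-derivative argument is cleaner, and the only care needed is that the formal and analytic logarithmic derivatives agree, which is standard. Finally, every rational function with $q(0)\neq0$ (here $q(0)\ne0$ since $\zeta_d(0)=1$) has positive radius of convergence, so $\zeta_d$ is not rational.

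The main obstacle is only the bookkeeping to make sure the lower bound beats the $1/n$ weights and the $n$-th root. This reduces to $n^{d-1}\to\infty$, which is exactly where $d\ge2$ is used. For $d=1$ the same bound gives a finite radius, consistent with $\zeta_1$.
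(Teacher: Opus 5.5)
Your proof is correct and takes essentially the paper's route: an exponential lower bound on $a_d(n)$ together with Cauchy--Hadamard shows that $\sum_n a_d(n)x^n/n$ has radius $0$, and $\zeta_d(0)=1$ transfers this to $\zeta_d$. The differences are cosmetic. For the lower bound, you use the all-$n$ envelope of Theorem~\ref{thm:existence} with $\kappa_d\ge\sqrt2$, whereas the paper uses $a_d(n)\ge 2^{n^d/2}$ along even $n$ from Lemma~\ref{lem:naive_bound}. For the transfer, you use the logarithmic derivative $\zeta_d'/\zeta_d=L_d'$ instead of the principal branch of $\log\zeta_d$, which makes the formal-versus-analytic compatibility slightly easier to justify.
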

\begin{proof}
By Lemma~\ref{lem:naive_bound} we have that $a_d(n)\ge2^{n^d/2}$ for every even $n$. Therefore we can conclude that $\limsup_na_d(n)^{\frac{1}{n}}=\infty$ (since $d\ge2$ and so $n^d/2\ge n$ for all $n\ge2$), and by the Cauchy-Hadamard theorem, the radius of convergence of $\sum_n\left(\frac{a_d(n)}{n}\right)x^n$ is $0$. Since $\zeta_d(0)=1\ne0$ we have that $\zeta_d$ is zero-free near $0$, and so on some neighborhood of $0$ the principal branch of $\log\zeta_d$ agrees with the power series $\sum_n\left(\frac{a_d(n)}{n}\right)x^n$. By the definition of $\zeta_d$ as its exponential, a positive radius of convergence for $\zeta_d$ around a zero-free point would force one for $\log\zeta_d$ there too, contradicting the above. Since every rational function with nonzero constant term is holomorphic, thus has a positive radius of convergence, on a neighborhood of $0$, and thus $\zeta_d$ cannot be rational.
\end{proof}

\begin{remark}
    Proposition~\ref{thm:zeta} is false in the case $d=1$. Specifically, from Example~\ref{ex:lucas} we have that $a_1(n)=L_n$ for every $n$, and so we can directly compute that $\zeta_1(x)=\frac{e^x}{1-x-x^2}$, which has a positive archimedean radius. 
\end{remark}

\begin{remark}
    Proposition~\ref{thm:zeta} tells us that we cannot apply the classical Borel-Dwork rationality criterion (see~\cite{Borel1894,Dwork1960}) to $\zeta_d$ (for $d \geq 2$), which certifies rationality of a power series with integer coefficients from an archimedean radius $R$ exceeding, at every prime $p$, a $p$-adic meromorphy radius $r_p^{-1}$. Even granting $N_d(n)\in\Z$ for every $n$ (so $\zeta_d\in\Z[[x]]$, and so $r_p\ge1$ at every $p$), Proposition~\ref{thm:zeta} gives us that $R=0$ unconditionally for $d\ge2$, so $R>r_p^{-1}$ can never hold at any $p$.
\end{remark}

Theorem~\ref{prop:Gauss} and Corollary~\ref{cor:p_mod_1} together give us that $a_d(p^k)-1$ is always divisible by $p$, so we now turn to the finer question of when the exact power of $p$ dividing $a_d(p^k)-1$ is independent of $k$.

\begin{definition}\label{def:tame}
We say that a prime $p$ is \textbf{tame with respect to $d$} if $\nu_p(a_d(p^k)-1)=\nu_p(a_d(p)-1)$ for every $k\ge1$. We denote the set of primes tame with respect to $d$ by $\Tame_d$.
\end{definition}

The following proposition gives us a useful criterion for tameness that we use throughout:

Thus tameness is not merely computable level by level: it is decided after at most $m$ levels, where $m$ is already known from $a_d(p)$.

\begin{proposition}\label{prop:tame_iff}
Let $d\geq1$. Then:
\begin{enumerate}
    \item If $\nu_p(a_d(p)-1)=1$ then $p \in \Tame_d$.
    \item If $\nu_p\big(a_d(p^j)-a_d(p^{j-1})\big) > \nu_p(a_d(p)-1) \geq 2$ for every $2\le j\le \nu_p(a_d(p)-1)$ then $p \in \Tame_d$. 
    \item $p$ is tame with respect to $d$ if and only if we have that  $\nu_p(a_d(p^k)-1)= \nu_p(a_d(p)-1)$ for every $1\le k\le \nu_p(a_d(p)-1)$.
\end{enumerate}
\end{proposition}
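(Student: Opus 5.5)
The whole argument rests on one telescoping identity together with the ultrametric inequality for $\nu_p$. Put $m=\nu_p(a_d(p)-1)\ge1$ (by Theorem~\ref{prop:Gauss}) and $\delta_j=a_d(p^j)-a_d(p^{j-1})$ for $j\ge2$. Then
\[
a_d(p^k)-1=(a_d(p)-1)+\sum_{j=2}^k\delta_j,
\]
and Theorem~\ref{prop:Gauss} gives $\nu_p(\delta_j)\ge j$. If every $\delta_j$ with $2\le j\le k$ satisfies $\nu_p(\delta_j)>m$, then the strict ultrametric inequality gives $\nu_p(a_d(p^k)-1)=m$. So the task is to show that all increments have valuation strictly larger than $m$.

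For item 1, take $m=1$. Every $j\ge2$ has $\nu_p(\delta_j)\ge j\ge2>1$, so the identity gives $\nu_p(a_d(p^k)-1)=1$ for all $k$, and $p\in\Tame_d$.

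For item 2, split the indices $j\ge2$ into two ranges. When $j\le m$, the hypothesis gives $\nu_p(\delta_j)>m$ directly. When $j\ge m+1$, Theorem~\ref{prop:Gauss} gives $\nu_p(\delta_j)\ge j\ge m+1>m$. So every increment has valuation greater than $m$, and the identity finishes the proof.

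For item 3, the forward implication is immediate from the definition. For the converse, suppose $\nu_p(a_d(p^k)-1)=m$ for $1\le k\le m$, and induct on $k>m$. Assume $\nu_p(a_d(p^{k-1})-1)=m$. Write
\[
a_d(p^k)-1=(a_d(p^{k-1})-1)+\delta_k,
\]
where $\nu_p(\delta_k)\ge k\ge m+1>m$. The ultrametric inequality then gives $\nu_p(a_d(p^k)-1)=m$.

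None of these steps is a genuine obstacle. The only point needing care is that the valuation equality uses the strict inequality $\nu_p(\delta_j)>m$; equality $\nu_p(\delta_j)=m$ would allow cancellation. This is exactly why item 2 requires the strict hypothesis for $j\le m$, and why Theorem~\ref{prop:Gauss} handles the range $j>m$ automatically.
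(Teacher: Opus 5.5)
Your proposal is correct and takes essentially the same route as the paper: items 1 and 2 use the telescoping identity, together with $\nu_p(\delta_j)\ge j$ from Theorem~\ref{prop:Gauss} and the strict ultrametric inequality. For the converse in item 3, your step-by-step induction is the paper's single telescoping from $p^m$ to $p^k$ written out one step at a time; that telescoping gives $a_d(p^k)\equiv a_d(p^m)\pmod{p^{m+1}}$ and hence $\nu_p(a_d(p^k)-1)=m$.
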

\begin{proof}
First, assume that $\nu_p(a_d(p)-1)=1$. As a telescopic series, we have that $\sum_{j=1}^k (a_d(p^j)-a_d(p^{j-1}))=a_d(p^k)-1$ for every $k\ge1$. Since $\nu_p(a_d(p)-1)=1$, and since by Theorem~\ref{prop:Gauss} we have that $\nu_p(a_d(p^j)-a_d(p^{j-1}))\ge j\ge2$ for every $j\ge2$. Since $\nu_p(a_d(p)-1)=1$ is strictly smaller than $\nu_p(a_d(p^j)-a_d(p^{j-1}))$ for every $j\ge2$, the non-archimedean triangle inequality (if $\nu_p(x)<\nu_p(y)$ then $\nu_p(x+y)=\nu_p(x)$) gives $\nu_p\big(\sum_{j=1}^k(a_d(p^j)-a_d(p^{j-1}))\big)=\nu_p(a_d(p)-1)=1$ for every $k$.\\ 

Second, suppose $\nu_p(a_d(p)-1) \ge2$. As in the previous case,  $a_d(p^k)-1= (a_d(p)-1) +  \sum_{j=2}^k(a_d(p^j)-a_d(p^{j-1}))$. Since $\nu_p(a_d(p^j)-a_d(p^{j-1}))>\nu_p(a_d(p)-1)$ for $2\le j\le \nu_p(a_d(p)-1)$, and by Theorem~\ref{prop:Gauss}, $\nu_p(a_d(p^j)-a_d(p^{j-1}))\ge j>\nu_p(a_d(p)-1) $ for $j>\nu_p(a_d(p)-1)$. So $\nu_p(a_d(p^j)-a_d(p^{j-1}))>\nu_p(a_d(p)-1)$ for every $j\ge2$, and the non-archimedean triangle inequality gives $\nu_p(a_d(p^k)-1)=\nu_p(a_d(p)-1)$ for every $k\ge1$.\\

Finally, for item 3, the forward implication is immediate. Conversely, suppose the displayed equality holds for $1\le j\le \nu_p(a_d(p)-1)$. For $k>m$, since $a_d(p^k)-a_d(p^m)= \sum_{j=\nu_p(a_d(p)-1)+1}^{k}(a_d(p^j)-a_d(p^{j-1}))$. Since $\nu_p(a_d(p^j)-a_d(p^{j-1}))\ge j\ge \nu_p(a_d(p)-1)+1$, we have that $a_d(p^k)\equiv a_d(p^{\nu_p(a_d(p)-1)})\pmod{p^{\nu_p(a_d(p)-1)+1}}$. Thus, since $\nu_p(a_d(p^{\nu_p(a_d(p)-1)})-1)=\nu_p(a_d(p)-1)$, it follows that $\nu_p(a_d(p^k)-1)=\nu_p(a_d(p)-1)$ for every $k>\nu_p(a_d(p)-1)$, as desired. 
\end{proof}

\begin{remark}\label{rem:WSS}
When $d=1$ from Example~\ref{ex:lucas} we have that $a_1(n)=L_n$, and so for a prime $p>5$, the classical definition of a (first-order, "Fibonacci-Wieferich" type) \textbf{Wall-Sun-Sun prime} is exactly a prime with $L_p\equiv1\pmod{p^2}$, i.e.\ with $m=\nu_p(a_1(p)-1)\ge2$ (the prime $5=\operatorname{disc}(\Q(\sqrt5)$ is excluded from the classical definition, as it ramifies and behaves differently), with the equivalence with the usual Fibonacci-quotient formulation $F_{p-(p/5)}\equiv0\pmod{p^2}$ is classical, see e.g.\ \cite{McIntoshRoettger} and the references therein). By Proposition~\ref{prop:tame_iff}, every prime $p> 5$ that is \textbf{not} Wall-Sun-Sun is tame for $d=1$. McIntosh and Roettger's original computation \cite{McIntoshRoettger} found no Wall-Sun-Sun prime below $2\times10^{14}$, subsequent distributed-computing searches have since pushed this bound substantially further, to primes smaller than $2^{64}\approx1.8\times10^{19}$ as of a completed PrimeGrid search in December 2022 (see~\cite{PrimeGrid}), again without finding one. So every prime that has been checked is tame for $d=1$, and whether Wall-Sun-Sun primes exist at all remains an open conjecture, and we do not resolve it here.
\end{remark}

% \begin{remark}\label{rem:tame_name}
% The terminology reflects that once the first exceptional level $m=\nu_p(a_d(p)-1)$ is $\ge2$, tameness of $p$ requires only finitely many further congruences (checked at levels $j=2,\dots,m$) to hold with \textbf{strict} inequality, by Theorem~\ref{prop:Gauss}, the inequality $\nu_p(a_d(p^j)-a_d(p^{j-1}))\ge j$ is automatic for $j>m$, so no further checking is needed beyond level $m$.
%\end{remark}

It is natural to ask if any tame primes actually exist. We show that $2$ and $3$ are not only tame for every $d$, but satisfy 
$\nu_p(a_d(p)-1)=1$. The main tool we use is a refinement of the orbit-counting argument of Proposition~\ref{prop:Gauss}, where we identify exactly which nonempty orbits of $\Ind(T_d(p))$ have size exactly $p$ (as opposed to $p^2$ or higher). We start with the following observation:

\begin{proposition}\label{prop:decompgeneral}
For every $d\ge1$ and odd prime $p$: the translation action of $G=(\quot{\Z}{p\Z})^d$ on $\Ind(T_d(p))$ has orbit sizes dividing $p^d$, a unique size-$1$ orbit $\{\emptyset\}$, and, writing $\orb_j$ for the number of orbits of size $p^j$ ($1\le j\le d$),
\[
a_d(p)-1\ =\ \sum_{j=1}^dp^j\orb_j.
\]
Consequently $\nu_p(a_d(p)-1)=1$ if and only if $p\nmid \orb_1$.
\end{proposition}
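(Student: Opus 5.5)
This is essentially the orbit-counting argument from the $k=1$ case of Theorem~\ref{prop:Gauss}, refined by recording orbit sizes. We let $G=(\quot{\Z}{p\Z})^d$ act on $X=\Ind(T_d(p))$ by translation. This is an action by graph automorphisms, since $G$ acts simply transitively on the vertices and preserves $S_p$.

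The first step is to describe the orbit sizes. By orbit--stabilizer, every orbit has size $[G:\Stab_G(I)]$, which divides $|G|=p^d$. Hence each orbit has size $p^j$ for some $0\le j\le d$.

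The second step is to show that $\{\emptyset\}$ is the unique orbit of size $1$. An orbit of size $1$ is a $G$-fixed independent set. As shown in the proof of Theorem~\ref{prop:Gauss}, a $G$-invariant vertex subset is either $\emptyset$ or the whole vertex set, by simple transitivity. The whole vertex set is not independent, since $T_d(p)$ has edges (here $p\ge3$ and $d\ge1$). So $\emptyset$ is the only fixed point.

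The third step is to sum over orbits. Since $X$ is partitioned into orbits, grouping them by size gives
\[
a_d(p)=|X|=1+\sum_{j=1}^d p^j\orb_j,
\]
which is the displayed identity.

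The fourth step is the valuation criterion. Write
\[
a_d(p)-1=p\Bigl(\orb_1+\sum_{j=2}^d p^{j-1}\orb_j\Bigr).
\]
The bracketed sum is congruent to $\orb_1$ modulo $p$. Therefore $\nu_p(a_d(p)-1)=1$ holds exactly when this bracket is a $p$-adic unit, that is, exactly when $p\nmid\orb_1$. If $d=1$, the sum over $j\ge2$ is empty and the argument is unchanged.

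No step here is a real obstacle; the argument only reorganizes the proof of Theorem~\ref{prop:Gauss}. The hypothesis that $p$ is odd is not used in this argument. It seems to be assumed with later computations in mind, such as identifying which orbits have size exactly $p$, where the $p=2$ degeneracy of $T_d(2)$ into the hypercube would complicate matters.
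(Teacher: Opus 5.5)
Your proposal is correct and follows essentially the same route as the paper: orbit--stabilizer gives orbit sizes dividing $p^d$, simple transitivity makes $\emptyset$ the unique fixed point, and summing orbit sizes followed by reducing modulo $p^2$ gives the criterion (factoring out $p$, as you do, is equivalent). You are also right that the oddness of $p$ is never used in this argument, since $T_d(2)$ still has edges and the same orbit count goes through.
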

\begin{proof}
As in the proof of the first item of Proposition~\ref{prop:Gauss},  $G$ acts simply transitively on the vertex set, so $\emptyset$ is the unique $G$-fixed independent set, orbit-stabilizer gives orbit sizes dividing $p^d$, sum orbit sizes over $\Ind(T_d(p))\setminus\{\emptyset\}$, and reduce mod $p^2$ for the final statement.
\end{proof}

Therefore, in order to understand $\nu_p(a_d(p)-1)$ we need to understand the behavior of the sets with an orbit of size $p$. To do so we show that each such orbit is governed by the graph $\Cay(\sfrac{\Z}{p\Z},D)$ for some $D$.

\begin{definition}
Let $p$ be some prime and let $d \geq 1$. For a nonzero linear functional $\varphi:\F_p^d\to\F_p$, write $c_i=\varphi(e_i)$ and let $H=\ker\varphi\le G=(\quot\Z{p\Z})^d$, a subgroup of index $p$.
\begin{enumerate}
    \item We say that $\varphi$ (or $H$) is \textbf{bad} if $c_i=0$ for some $i$, and \textbf{good} otherwise. 
    \item We set $F(H)=|\Fix_H(\Ind(T_d(p)))|$.
    \item For good $\varphi$, let $D_\varphi=\{\pm\varphi(e_i):1\le i\le d\}\subseteq\F_p^\times$, and let $c(\varphi)=i(\Cay(\sfrac{\Z}{p\Z}, D_\varphi))$. 
\end{enumerate}
    
\end{definition}

The following lemma gives us the main combinatorial information to prove tameness of $2$ and $3$:

\begin{lemma}\label{lem:reduction}
We have:
\begin{enumerate}
\item If $H$ is bad, the only $H$-invariant independent set of $T_d(p)$ is $\emptyset$.
\item If $H$ is good, the map $I\mapsto\varphi(I)$ is a bijection from the $H$-invariant independent sets of $T_d(p)$ onto $\Ind(\Cay(\sfrac{\Z}{p\Z}, D_\varphi))$.
\item $c(\varphi)$ depends only on $H=\ker\varphi$ (equivalently, only on $\varphi$ up to scaling by $\F_p^\times$), not on the choice of $\varphi$ with that kernel.
\item $\displaystyle a_d(p)\ \equiv\ 1+\sum_{H\le G,\ [G:H]=p}\big(F(H)-1\big)\pmod{p^2}$.
\item $c(\lambda D)=c(D)$ for every $\lambda\in\F_p^\times$, consequently, for $D=D_\varphi$ (where $\varphi$ good), $r(D_\varphi)=\frac{(c(D_\varphi)-1)}{p}$ depends only on $H=\ker\varphi$.
\end{enumerate}
\end{lemma}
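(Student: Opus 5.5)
The plan is to treat an $H$-invariant subset of the vertex set $G=\F_p^d$ as a union of cosets of $H=\ker\varphi$. Since $[G:H]=p$, the cosets are the fibres $\varphi^{-1}(t)$ for $t\in\F_p$. Hence every $H$-invariant set has the form $I=\varphi^{-1}(A)$ for a unique $A\subseteq\F_p$, namely $A=\varphi(I)$.

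For items 1 and 2 I will characterize when $\varphi^{-1}(A)$ is independent in $T_d(p)$. The edges join $x$ to $x\pm e_i$, so $\varphi^{-1}(A)$ fails to be independent exactly when some $t\in A$ also has $t\pm c_i\in A$. This is because any $x$ with $\varphi(x)=t$ has the neighbour $x\pm e_i$ in the fibre over $t\pm c_i$, and conversely every edge arises this way. If some $c_i=0$ (the bad case), then any nonempty $A$ produces an edge inside a single fibre, since $x$ and $x+e_i$ lie in the same fibre and $p\ge2$; so only $\emptyset$ survives. If all $c_i\ne0$ (the good case), the condition says precisely that $A$ is independent in $\Cay(\quot{\Z}{p\Z},D_\varphi)$. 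The map $I\mapsto\varphi(I)$ inverts $A\mapsto\varphi^{-1}(A)$, so it is a bijection. This is the same pullback argument as in Lemma~\ref{lem:pullback}, and the case $p=2$ needs no special care because the $c_i$ are just nonzero elements.

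For items 3 and 5 I will use that any other functional with the same kernel is $\lambda\varphi$ for some $\lambda\in\F_p^\times$, so $D_{\lambda\varphi}=\lambda D_\varphi$. Multiplication by $\lambda$ is a group automorphism of $\quot{\Z}{p\Z}$. It carries $\Cay(\quot{\Z}{p\Z},D)$ isomorphically onto $\Cay(\quot{\Z}{p\Z},\lambda D)$, because $x-y\in D$ if and only if $\lambda x-\lambda y\in\lambda D$. Therefore $c(\lambda D)=c(D)$, which gives item 3 and the first half of item 5. The claim that $r(D_\varphi)$ depends only on $H$ then follows immediately. The only extra point is integrality: the cyclic group $\quot{\Z}{p\Z}$ acts freely on the nonempty independent sets of the circulant, since an invariant nonempty set would be everything. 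So $c(D)\equiv1\pmod p$, which I would note for completeness.

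Item 4 is the main step. I will refine Proposition~\ref{prop:decompgeneral}: we have $a_d(p)-1\equiv p\cdot\orb_1\pmod{p^2}$, and every orbit of size $p$ has a stabilizer of index $p$. I therefore group the size-$p$ orbits according to their stabilizer $H$. The nonempty sets fixed by $H$ have stabilizer either $H$ or $G$, and stabilizer $G$ is impossible for nonempty sets. So the sets with stabilizer exactly $H$ number $F(H)-1$, and summing over $H$ gives $p\,\orb_1=\sum_H(F(H)-1)$. The obstacle is that Proposition~\ref{prop:decompgeneral} was stated for odd $p$. For $p=2$ I will rerun the orbit count directly: the orbit sizes are powers of $2$, and the sizes $\ge4$ vanish mod $4$, so the same grouping by stabilizer works verbatim.
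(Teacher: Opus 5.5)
Your proposal is correct and follows essentially the same route as the paper: $H$-invariant sets as unions of fibres of $\varphi$ for items 1--2, the scaling isomorphism $x\mapsto\lambda x$ of circulants for items 3 and 5, and grouping the size-$p$ orbits by their maximal stabilizer $H$ for item 4. The differences are cosmetic. You get disjointness across distinct $H$ for free by counting sets whose stabilizer is exactly $H$, where the paper uses $\langle H,H'\rangle=G$. You also check $p=2$ explicitly, which the paper's orbit count covers implicitly.
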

\begin{proof}
For item 1, since $I$ is $H$-invariant, $I+h=I$ for every $h\in H$, if $c_i=0$ then $e_i\in H=\ker\varphi$, so in particular $I+e_i=I$, i.e.\ $x\in I$ if and only if $x+e_i\in I$. If $I\ne\emptyset$, pick $x\in I$, then $x+e_i\in I$ too, but $x,x+e_i$ are adjacent in $T_d(p)$, contradicting independence of $I$. So $I=\emptyset$.\\

For item 2, suppose $H$ is good, so every $c_i\ne0$. If $I$ is $H$-invariant, it is a union of $H$-cosets, i.e.\ $I=\varphi^{-1}(S)$ for $S=\varphi(I)\subseteq\quot\Z{p\Z}$ (as $\sfrac{G}{H}\cong\quot\Z{p\Z}$ via $\varphi$). Suppose $x,y=x+e_i\in I$ are adjacent, then $s=\varphi(x)$ and $s+c_i=\varphi(y)$ both lie in $S$, so $S$ is not independent in $\Cay\quot\Z{p\Z},\{\pm c_i\})$ (as $c_i\ne0$, this is a genuine edge). Conversely if $I=\varphi^{-1}(S)$ for $S$ independent in this circulant, and $x,y\in I$ were adjacent in $T_d(p)$ via $y=x+e_i$, then $\varphi(x),\varphi(y)=\varphi(x)+c_i\in S$ would be adjacent in the circulant (as $c_i\ne0$), contradicting independence of $S$. Since every $H$-invariant set is of the form $\varphi^{-1}(S)$ for a unique $S$ (as $\varphi$ is surjective with kernel $H$), this establishes the bijection.\\

For item 3, if $\varphi'=\lambda\varphi$ for $\lambda\in\F_p^\times$, then $c_i'=\lambda c_i$, and $\{\pm c_i'\}=\lambda\cdot\{\pm c_i\}$, the map $x\mapsto\lambda^{-1}x$ is a graph isomorphism $\Cay\quot\Z{p\Z},\{\pm c_i'\})\to\Cay\quot\Z{p\Z},\{\pm c_i\})$ (as $x\sim y$ via $x-y=\pm c_i'$ corresponds to $\lambda^{-1}x-\lambda^{-1}y=\pm\lambda^{-1}c_i'=\pm c_i$), so the two circulants are isomorphic and $c(\varphi')=c(\varphi)$. Since any two functionals with the same kernel differ exactly by a scalar $\lambda\in\F_p^\times$, we can conclude $R$ depends only on $H$.\\

For item 4, as in the proof of the item 2 in Proposition~\ref{prop:Gauss}, every orbit of the translation action of $G$ on $\Ind(T_d(p))$ has size a power of $p$, a nonempty orbit has size exactly $p$ iff its stabilizer has index exactly $p$, i.e.\ is some index-$p$ subgroup $H$. Since $\sfrac{G}{H}\cong\quot\Z{p\Z}$ is cyclic of prime order, $H$ is a \textbf{maximal} subgroup, so for $I\ne\emptyset$ we have that $I$ is $H$-invariant if and only if $\Stab_G(I)\supseteq H$, which is equivalent to having $\Stab_G(I)=H$ (as $\Stab_G(I)\ne G$, else $I$ is either empty or everything, and the whole vertex set is not independent for $p\ge2$) or $\Stab_G(I)=G$, but $I\ne\emptyset$ so $\Stab_G(I) \ne G$, so indeed $\Stab_G(I)=H$ exactly. So the nonempty $H$-invariant independent sets are exactly those with orbit size $p$ and stabilizer $H$, by the previous items there are $F(H)-1$ of these (subtracting the empty set), and this number is automatically a multiple of $p$ (partition into full $G$-orbits of size $p$, since translating an $H$-invariant set by any $g\in G$ gives another $H$-invariant set as $H\trianglelefteq G$). Different subgroups $H\ne H'$ give disjoint sets of nonempty invariant independent sets. This is true since if $I$ were both $H$- and $H'$-invariant, $I$ would be invariant under $\langle H,H'\rangle$, which equals $G$ since $H,H'$ are distinct maximal subgroups, forcing $I=\emptyset$. Therefore we can conclude that 
\[
\sum_{\text{orbits of size }p}|\textup{Orbit}|\ =\ \sum_H\big(F(H)-1\big),
\]
and combined with $a_d(p)-1=\sum_{\text{nontrivial orbits}}|\textup{Orbit}|$ and the fact that all orbits of size $\ge p^2$ contribute $0\pmod{p^2}$.\\

For item 5, the claim $c(\lambda D)=c(D)$, and thus that $c(D_\varphi)$ (so also $r(D_\varphi)$) depends only on $H=\ker\varphi$, is exactly (iii). That $r(D_\varphi)\in\Z$ follows from the next paragraph. Since $\quot{\Z}{p\Z}$ (by rotation) acts on $\Ind(\Cay(\sfrac{\Z}{p\Z},D))$ for $D\ne\emptyset$ with orbit sizes $1$ or $p$, and only $\emptyset$ gives a fixed point (the full vertex set is never independent, $D\ne\emptyset$), we get $c(D)\equiv1\pmod p$, so $r(D)=\sfrac{(c(D)-1)}{p}\in\Z$, moreover $r(D)$ counts the $G$-orbits of independent sets $I$ of $T_d(p)$ with $\Stab_G(I)=\ker\varphi$, for $D=D_\varphi$.
\end{proof}

\begin{corollary}\label{thm:p2}
For every $d\ge1$ we have that:
\begin{enumerate}
    \item $a_d(2)\equiv3\pmod4$.
    \item $a_d(3)\equiv1+3\cdot2^{d-1}\pmod9$.
\end{enumerate}
In particular, $2$ and $3$ are tame for every $d$ with $\nu_2(a_d(2)-1)=\nu_3(a_d(3)-1)=1$.
\end{corollary}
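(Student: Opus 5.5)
The plan is to apply item 4 of Lemma~\ref{lem:reduction} at $p=2$ and $p=3$. It reduces $a_d(p)\bmod p^2$ to a sum over index-$p$ subgroups $H\le G=(\quot{\Z}{p\Z})^d$ of $F(H)-1$. By items 1 and 2 of the same lemma, $F(H)-1=0$ for bad $H$, and $F(H)-1=c(\varphi)-1$ for good $H=\ker\varphi$. So for each $p$ there are two tasks: enumerate the good hyperplanes up to scaling, and count the independent sets of the single circulant $\Cay(\quot{\Z}{p\Z},D_\varphi)$ that arises.

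For $p=2$ the only nonzero scalar is $1$, so $\varphi$ is good exactly when $c_i=1$ for all $i$. There is therefore a unique good subgroup, namely the kernel of the coordinate-sum functional. For it $D_\varphi=\{1\}$, so the circulant is a single edge on $\quot{\Z}{2\Z}$, which has $3$ independent sets. Item 4 then gives $a_d(2)\equiv1+(3-1)=3\pmod 4$.

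For $p=3$, good functionals are those with every $c_i\in\{1,2\}$. There are $2^d$ of them, and they pair up under scaling by $\F_3^\times=\{\pm1\}$, giving $2^{d-1}$ good subgroups $H$ (item 3 ensures the count is taken per subgroup). For each, $D_\varphi=\{\pm c_i\}=\{1,2\}$, so the circulant is the triangle $K_3$, with $4$ independent sets. Item 4 then yields $a_d(3)\equiv1+3\cdot2^{d-1}\pmod 9$.

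The tameness conclusion follows at once. Since $3$ is odd, $a_d(2)-1\equiv2\pmod4$ has $\nu_2=1$. Since $3\nmid2^{d-1}$, $\nu_3(a_d(3)-1)=1$. Item 1 of Proposition~\ref{prop:tame_iff} then gives $2,3\in\Tame_d$.

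The main point requiring care is that item 4 of Lemma~\ref{lem:reduction} is valid at $p=2$, since Proposition~\ref{prop:decompgeneral} was stated only for odd $p$. I would recheck its ingredients directly for $p=2$:
\begin{itemize}
\item orbit sizes under $(\quot{\Z}{2\Z})^d$ are powers of $2$;
\item $\emptyset$ is the unique fixed point, because $T_d(2)$ is the hypercube and the full vertex set is not independent;
\item an index-$2$ subgroup is maximal, so any nonempty $H$-invariant independent set has stabilizer exactly $H$;
\item two distinct index-$2$ subgroups generate $G$, so the corresponding sets of nonempty invariant independent sets are disjoint.
\end{itemize}
None of these uses oddness. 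The only extra observation is that in $T_d(2)$ we have $S_2=\{e_i\}$, and correspondingly $D_\varphi=\{1\}$ is a single element because $1=-1$. I would also sanity-check $d=1$: $T_1(2)$ is an edge, so $a_1(2)=3$, and $T_1(3)$ is a triangle, so $a_1(3)=4$, both matching the formulas.
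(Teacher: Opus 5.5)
Your proposal is correct and follows the paper's proof essentially line by line. You count the single good hyperplane at $p=2$ and the $2^{d-1}$ good hyperplanes at $p=3$, and you identify the circulants $K_2$ and $K_3$ with $3$ and $4$ independent sets. You then apply item 4 of Lemma~\ref{lem:reduction}, and deduce tameness from item 1 of Proposition~\ref{prop:tame_iff}. Your explicit check that item 4 holds at $p=2$ covers a point the paper passes over silently, since Proposition~\ref{prop:decompgeneral} was stated only for odd $p$.
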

\begin{proof}
For $p=2$ we have that $\F_2^\times=\{1\}$, and so every nonzero functional $\varphi$ has $c_i\in\{0,1\}$ with no scaling ambiguity, and there are $2^d-1$ index-2 subgroups (one per nonzero $(c_1,\dots,c_d)\in\F_2^d$). Therefore, exactly one of these is good, which is  $c_1=\dots=c_d=1$. For this $\varphi$ we have that $\{\pm c_i\}=\{1\}$, as $-1\equiv1\pmod2$, and so $c(\varphi)=i\big(\Cay\quot\Z{2\Z},\{1\})\big)=i(K_2)=3$ (the two vertices are adjacent, independent sets are $\emptyset,\{0\},\{1\}$). By part 4 of Lemma~\ref{lem:reduction} we have that $a_d(2)\equiv1+(3-1)=3\pmod4$ for every $d$.\\

For $p=3$ we have that $\F_3^\times=\{1,2\}$, and so a good $\varphi$ has $c_i\in\{1,2\}$ for every $i$, giving $2^d$ choices, in $2^{d-1}$ scaling classes (as $\F_3^\times =\{1,2\}$ with $-1=2$, so scaling by $\{1,2\}$ has orbits of size 2 on good vectors, therefore $2\ne1$ is a nontrivial scalar and no good vector is fixed by it). For every good $\varphi$ we have that $\{\pm c_i\}=\{1,2\}$ regardless of the specific values (since both $1,2\in\{1,2\}$ are already negatives of each other mod 3: $-1=2$), so the connection set is always all of $\F_3^\times=\{1,2\}$, giving that $\Cay\quot\Z{3\Z},\{1,2\})$ is the complete graph on 3 vertices and has $4$ independent sets. So $c(\varphi)=4$ for all $2^{d-1}$ good hyperplanes, and by part 4 of Lemma~\ref{lem:reduction}, $a_d(3)\equiv1+2^{d-1}(4-1)=1+3\cdot2^{d-1}\pmod9$. Since $3\nmid2^{d-1}$, we have that $\nu_3(3\cdot2^{d-1})=1$ exactly, and so $a_d(3)-1\equiv3\cdot2^{d-1}\not\equiv0\pmod9$.
\end{proof}

From Proposition~\ref{thm:p2} one would expect that $\nu_p(a_d(p)-1)=1$ for every $p$. Yet, this is not true, even for $p=5$, as we see in the following proposition:

\begin{proposition}\label{thm:p5}
$a_d(5)\equiv1+5\big(2^{d-1}+4^{d-1}\big)\pmod{25}$ for every $d\ge1$. Consequently, $\nu_5(a_d(5)-1)=1$ if and only if $d\not\equiv3\pmod4$.
\end{proposition}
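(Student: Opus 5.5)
The plan is to apply part 4 of Lemma~\ref{lem:reduction} with $p=5$, which gives
\[
a_d(5)\equiv 1+\sum_{[G:H]=5}\big(F(H)-1\big)\pmod{25}.
\]
By part 1 of that lemma, bad hyperplanes contribute $F(H)-1=0$. For a good $H=\ker\varphi$, part 2 gives $F(H)=c(\varphi)$, and by part 3 this depends only on $H$, i.e.\ on the scaling class of $(c_1,\dots,c_d)\in(\F_5^\times)^d$ under $\F_5^\times$. So the sum reduces to a sum over the $4^d/4=4^{d-1}$ scaling classes of good vectors, and the whole proof is a classification of the possible connection sets $D_\varphi$ together with a count.

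First, classify the connection sets. In $\F_5^\times$ the sets $\{\pm c\}$ are $A=\{1,4\}$ and $B=\{2,3\}$. Hence $D_\varphi$ is one of three sets.
\begin{itemize}
\item $D_\varphi=A$, when every $c_i\in A$. Then $\Cay(\quot{\Z}{5\Z},A)$ is the $5$-cycle, which has $L_5=11$ independent sets by Example~\ref{ex:lucas}.
\item $D_\varphi=B$, when every $c_i\in B$. Multiplication by $2$ is an isomorphism onto the previous case, so again there are $11$ independent sets.
\item $D_\varphi=\F_5^\times$, when some $c_i$ lie in $A$ and some in $B$. The Cayley graph is $K_5$, which has $6$ independent sets.
\end{itemize}

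Next, count the scaling classes of each type. There are $2^d$ vectors with all $c_i\in A$ and $2^d$ with all $c_i\in B$. Scaling by $-1$ preserves $A$ and $B$, while scaling by $2$ swaps them. So the $2^{d+1}$ pure vectors form $2^{d+1}/4=2^{d-1}$ classes. The mixed vectors number $4^d-2^{d+1}$, giving $4^{d-1}-2^{d-1}$ classes. This yields
\[
\sum_H (F(H)-1)=10\cdot 2^{d-1}+5\,(4^{d-1}-2^{d-1})=5\,(2^{d-1}+4^{d-1}),
\]
which proves the congruence. As a sanity check, $d=1$ gives $1+5\cdot 2=11=a_1(5)$. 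The main care point is the scaling-class count: no good vector is fixed by a nontrivial scalar, so every class has size exactly $4$.

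Finally, by Proposition~\ref{prop:decompgeneral} or directly, $\nu_5(a_d(5)-1)=1$ if and only if $5\nmid 2^{d-1}(1+2^{d-1})$. This holds if and only if $2^{d-1}\not\equiv -1\pmod 5$. Since $2$ has order $4$ modulo $5$ and $2^2\equiv -1$, this is equivalent to $d-1\not\equiv 2\pmod 4$, i.e.\ $d\not\equiv 3\pmod 4$.
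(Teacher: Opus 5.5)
Your proof is correct and follows the paper's argument. You reduce via part 4 of Lemma~\ref{lem:reduction}, classify $D_\varphi$ as $A$, $B$, or $\F_5^\times$ (with $11$, $11$, $6$ independent sets), count $2^{d-1}$ uniform and $4^{d-1}-2^{d-1}$ mixed good hyperplanes, and finish with the same mod-$5$ analysis; your count, dividing the pure and mixed vector totals by the free orbit size $4$, is just a more direct version of the paper's pairing of class-patterns $\{\tau,\bar\tau\}$.
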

\begin{proof}
For $p=5$ we have that $\F_5^\times=\{1,2,3,4\}$ splits into two $\pm1$-classes, namely $A=\{1,4\}$ and $B=\{2,3\}$ (as $4=-1$ and $3=-2$). For a good $\varphi$ with $c_i\in\F_5^\times$, the connection set $\{\pm c_i:1\le i\le d\}$ equals $A$ if every $c_i\in A$, equals $B$ if every $c_i\in B$, and equals $A\cup B=\F_5^\times$ if both classes occur among $c_1,\dots,c_d$ (as, e.g., $c_i=1\Rightarrow\{\pm c_i\}=A$ and $c_j=2\Rightarrow\{\pm c_j\}=B$, and these unions accumulate). The three possible circulants are: $\Cay\quot\Z{5\Z},A)=\Cay\quot\Z{5\Z},\{1,4\})\cong C_5$ (the $5$-cycle, since $1$ generates $\quot\Z{5\Z}$), with $i(C_5)=L_5=11$ (Example~\ref{ex:lucas}), $\Cay\quot\Z{5\Z},B)=\Cay\quot\Z{5\Z},\{2,3\})$, isomorphic to $C_5$ via $x\mapsto2^{-1}x$ (as $2$ also generates $\quot\Z{5\Z}$), so again $i=11$, and $\Cay\quot\Z{5\Z},\F_5^\times)$ is a complete graph on 5 vertices, which has $6$ independent sets.  \\

We now count good hyperplanes of each kind. There are $4^d$ good vectors $(c_1,\dots,c_d)$, in $4^{d-1}$ scaling classes (as $|\F_5^\times|=4$ acts freely on good vectors: $\lambda c=c$ forces $\lambda=1$ since $c_i\ne0$). Call a vector's class-pattern the tuple in $\{A,B\}^d$ recording which class each $c_i$ lies in, there are $2^d$ possible patterns, each realized by exactly $2^d$ vectors (2 choices within each class per coordinate). Scaling by $-1\in\{\pm1\}$ multiplies each $c_i$ by $-1$, which preserves each coordinate's class ($-1\cdot A=A$ and  $-1\cdot B=B$, since $A,B$ are themselves $\pm$-stable), hence preserves the class-pattern, scaling by $2$ (representing the other coset of $\{\pm1\}$ in $\F_5^\times$) sends $A\to B$ and $B\to A$ ($2\cdot1=2\in B$, $2\cdot4=3\in B$, $2\cdot2=4\in A$, $2\cdot3=1\in A$), so it sends every class-pattern to its complementary pattern $\bar\tau$ (every coordinate's class flipped). So the $4^{d-1}$ good hyperplanes correspond to unordered pairs $\{\tau,\bar\tau\}$ of class-patterns (there are $2^{d-1}$ such pairs, as no pattern is self-complementary for $d\ge1$), each pair accounting for $\frac{4^{d-1}}{2^{d-1}}=2^{d-1}$ hyperplanes (matching that each of the $2^d$ vectors realizing pattern $\tau$, or equally $\bar\tau$, splits into $2^{d-1}$ pairs under the $\{\pm1\}$-action, each combining with a $\{2,3\}$-translate to complete one hyperplane orbit of size 4).\\

Exactly one pair $\{\tau,\bar\tau\}$ is uniform (all-$A$ paired with all-$B$, its complement), contributing $2^{d-1}$ hyperplanes with $c=11$ each, the remaining $2^{d-1}-1$ pairs are mixed (both $\tau,\bar\tau$ hit both classes, since a pattern that is not uniform, upon complementing every coordinate, remains non-uniform), each contributing $2^{d-1}$ hyperplanes with $c=6$. By part 4 of Lemma~\ref{lem:reduction} we have that
\[
a_d(5)-1\ \equiv\ 2^{d-1}(11-1)+\big(2^{d-1}-1\big)2^{d-1}(6-1)\  =\ 5\cdot2^{d-1}\big(2^{d-1}+1\big)\pmod{25}.
\]
Since $5\cdot2^{d-1}(2^{d-1}+1)=5\cdot2^{d-1}+5\cdot4^{d-1}$, this is $5(2^{d-1}+4^{d-1})\pmod{25}$, giving the claimed congruence. As $4\equiv-1\pmod5$, we have that $2^{d-1}+4^{d-1}\equiv2^{d-1}+(-1)^{d-1}\pmod5$, since $\ord_5(2)=4$, this quantity cycles with period 4 in $d$, taking the values $2,1,0,2\pmod5$ at $d\equiv1,2,3,0\pmod4$ respectively (direct check), vanishing exactly when $d\equiv3\pmod4$.
\end{proof}

We end this section with a closer analysis for $d=2$, both as an illustration of the mechanism above and because it isolates precisely the additional (open) arithmetic input that would be needed to decide tameness of a prime $p$ once $\nu_p(a_2(p)-1)\ge2$. In order to do so, by Proposition~\ref{prop:decompgeneral}, we wish to understand $\orb_1$. 

% Throughout this subsection, $p$ is an odd prime, $G=(\quot{\Z}{p\Z})^d$ acts on $X=\Ind(T_d(p))$ by translation, and we set
% \[
% Q_d(p)\ =\ \frac{a_d(p)-1}p\ \in\ \Z,
% \]
% so that $\nu_p(a_d(p)-1)=1$ if and only if $Q_d(p)\not\equiv0\pmod p$. As in the proof of Proposition~\ref{prop:Gauss}(a), every nontrivial $G$-orbit has size a power of $p$ dividing $p^d$, so
% \[
% Q_d(p)\ \equiv\ \#\{\text{orbits of size exactly }p\}\pmod p.
% \]
% Orbits of size exactly $p$ correspond to independent sets $I$ with $\Stab_G(I)=H=\ker\varphi$ for some hyperplane, i.e.\ nonzero linear functional $\varphi:\F_p^d\to\F_p$. Call $\varphi$ (equivalently, $H$) \textbf{good} if $\varphi(e_i)\ne0$ for every $i$, and \textbf{bad} otherwise. .

\begin{proposition}\label{lem:mobius}
If $d=2$ we have $$\orb_1\ =\ \frac1p\sum_{[G \colon H]=p}\big(F(H)-1\big)=\ 2L_p-(p-1)+\sum_{r\in\F_p^\times\setminus\{\pm1\}} i\big(\Cay(\sfrac{\Z}{p\Z},\{\pm1,\pm r\})\big).$$ 

\end{proposition}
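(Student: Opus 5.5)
The plan is to compute the sum $\sum_{[G:H]=p}(F(H)-1)$ over the $p+1$ index-$p$ subgroups of $G=(\sfrac{\Z}{p\Z})^2$ explicitly, using the classification of Lemma~\ref{lem:reduction}, and to relate it to $\orb_1$ by the orbit decomposition already established in the proof of part 4 of that lemma. Throughout, $p$ is an odd prime, as in Proposition~\ref{prop:decompgeneral}.

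\textbf{First equality.} By the proof of part 4 of Lemma~\ref{lem:reduction}, a nonempty independent set $I$ lies in an orbit of size exactly $p$ if and only if $\Stab_G(I)=H$ for a unique index-$p$ subgroup $H$. Hence the nonempty $H$-invariant independent sets, which number $F(H)-1$, for distinct $H$ are pairwise disjoint. Their union is exactly the union of all orbits of size $p$, so
\[
p\cdot\orb_1=\sum_{[G:H]=p}\big(F(H)-1\big),
\]
which is the first equality.

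\textbf{Evaluating the sum.} An index-$p$ subgroup is $\ker\varphi$ with $\varphi=(c_1,c_2)\neq0$ taken up to $\F_p^\times$-scaling, giving $p+1$ subgroups.
\begin{enumerate}
\item The two bad ones, $c_1=0$ and $c_2=0$, contribute $F(H)-1=0$ by part 1 of Lemma~\ref{lem:reduction}.
\item Each of the remaining $p-1$ good ones can be normalized to $\varphi=(1,r)$ with $r\in\F_p^\times$, and distinct $r$ give distinct kernels. By parts 2 and 3 of Lemma~\ref{lem:reduction}, $F(H)=i\big(\Cay(\sfrac{\Z}{p\Z},\{\pm1,\pm r\})\big)$.
\item For $r=\pm1$ (two distinct values, since $p$ is odd) the connection set is $\{\pm1\}$. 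The circulant is then the $p$-cycle, so $F(H)=L_p$ by Example~\ref{ex:lucas}.
\item For the $p-3$ values $r\notin\{\pm1\}$ we keep the term $i\big(\Cay(\sfrac{\Z}{p\Z},\{\pm1,\pm r\})\big)$ as it stands.
\end{enumerate}
Summing these contributions gives
\[
2(L_p-1)+\sum_{r\ne\pm1}\Big(i\big(\Cay(\sfrac{\Z}{p\Z},\{\pm1,\pm r\})\big)-1\Big)=2L_p-(p-1)+\sum_{r\in\F_p^\times\setminus\{\pm1\}}i\big(\Cay(\sfrac{\Z}{p\Z},\{\pm1,\pm r\})\big),
\]
since $2+(p-3)=p-1$.

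\textbf{Main obstacle: normalization.} The computation above identifies the closed-form expression with $\sum_H(F(H)-1)=p\cdot\orb_1$, not with $\orb_1$. So, as literally written, the second equality appears to be missing a factor $\tfrac1p$ on the right; it would become correct as $\orb_1=\tfrac1p\big(2L_p-(p-1)+\sum_r i(\cdots)\big)$. The value $p=3$ confirms this. There the $r$-sum is empty and $L_3=4$, so the expression is $2\cdot4-2=6$. On the other hand, Corollary~\ref{thm:p2} gives $2^{d-1}=2$ good hyperplanes, each contributing $F(H)-1=4-1=3$, so $\sum_H(F(H)-1)=6$ while $\orb_1=2$. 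I would therefore prove the identity with the $\tfrac1p$ applied to the whole right-hand side. The remaining care point is verifying that $r\mapsto\ker(1,r)$ is a bijection onto the good hyperplanes, which holds because scaling is the only ambiguity and fixing $c_1=1$ removes it.
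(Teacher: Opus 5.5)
Your argument is correct and is essentially the paper's own proof. Like the paper, you identify $p\cdot\orb_1$ with $\sum_H(F(H)-1)$ via uniqueness of the index-$p$ stabilizer, drop the two bad hyperplanes, normalize the good ones to $\varphi=(1,r)$, and split off $r=\pm1$, where the circulant is the $p$-cycle with $L_p$ independent sets.

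Your normalization objection is also justified. The paper's proof itself ends with $p\,\orb_1=2L_p-(p-1)+\sum_{r\ne\pm1}i\big(\Cay(\sfrac{\Z}{p\Z},\{\pm1,\pm r\})\big)$, so the second equality in the displayed statement is missing the factor $\tfrac1p$, exactly as your $p=3$ check shows. As you note, the formula also requires $p$ odd.
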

\begin{proof}
Since $[G:H]=p$ is prime, no subgroup lies strictly between $H$ and $G$, so $\Stab_G(S)\supseteq H$ forces $\Stab_G(S)=H$ or $\Stab_G(S)=G$ (the latter meaning $S=\emptyset$, the only $G$-fixed point). Hence $N(H)=|\{S:\Stab_G(S)=H\}|=F(H)-1$. As $G$ is abelian, an orbit with stabilizer $H$ has size $p$ and contributes $p$ independent sets to $N(H)$, so the number of such orbits is $\sfrac{(F(H)-1)}{p}$, summing over the $p+1$ index-$p$ subgroups (i.e.\ hyperplanes of $\F_p^2$) gives $\orb_1$.\\

Now, every good hyperplane has a unique representative $\phi=(1,r)$ for some $ r\in\F_p^\times$. Therefore $p \cdot \,\operatorname{orb}_1
=
\sum_{r\in\F_p^\times}\bigl(i\big(\Cay(\sfrac{\Z}{p\Z},\{\pm1,\pm r\})\big)-1\bigr)$. Since $i\big(\Cay(\sfrac{\Z}{p\Z},\{\pm1,\pm 1\})\big)=L_p$, we have that $p\,\operatorname{orb}_1 = 2L_p-(p-1) + \sum_{r\ne\pm1}i\big(\Cay(\sfrac{\Z}{p\Z},\{\pm1,\pm r\})\big)$, as desired. 
\end{proof}

\begin{remark}
    We can observe that the set $\{\pm1,\pm r\}$ is unchanged under $r\mapsto-r$. And $x\mapsto r^{-1}x$ is a graph automorphism of $\quot{\Z}{p\Z}$ carrying $\{\pm1,\pm r\}$ to $\{\pm r^{-1},\pm1\}$. In particular, the group $\{\pm1\}\times\{1,\mathrm{inv}\}\cong(\sfrac{\Z}{2\Z})^2$ acts on $\F_p^\times\setminus\{\pm1\}$ by $r\mapsto\pm r^{\pm1}$, with orbits of size dividing $4$ (size $<4$ occurring only if $r^2=-1$, i.e.\ only when $p\equiv1\pmod4$). Consequently the sum $\sum_{r\ne\pm1} i\big(\Cay(\sfrac{\Z}{p\Z},\{\pm1,\pm r\})\big)$ is a sum of integers with at most $\left\lceil\frac{p-3}{4}\right\rceil$ genuinely distinct values of $i\big(\Cay(\sfrac{\Z}{p\Z},\{\pm1,\pm r\})\big)$, each occurring  $4$ times.
\end{remark}

% \begin{openquestion}
% Is there a closed form, or a nontrivial congruence, for $J(1,r)\bmod p$ - for a single $r$, or for the reduced sum of Corollary~\ref{cor:reduced}?
% \end{openquestion}

\begin{remark}
Writing $L_p=1+p\ell$ and $i\big(\Cay(\sfrac{\Z}{p\Z},\{\pm1,\pm r\})\big)=1+pj_r$ (both integers by Corollary~\ref{cor:p_mod_1} applied to $\Cay(\sfrac{\Z}{p\Z},\{\pm1\})=T_1(p)$ and $\Cay(\sfrac{\Z}{p\Z},\{\pm1,\pm r\})$, respectively, the same orbit-counting argument as item 1 of Proposition~\ref{prop:Gauss}, applied to the corresponding circulant graphs, which needs no new input),
\[
\nu_p(a_2(p)-1)=1\quad\Longleftrightarrow\quad 2\ell+\sum_{r\ne\pm1}j_r\ \not\equiv\ 0\pmod p.
\]
Whether $\ell\ne0$ (equivalently, whether $p$ fails to be Wall-Sun-Sun, in the sense of Remark~\ref{rem:WSS}) is one summand in this criterion, but $\ell\ne0$ is neither necessary nor sufficient for $\nu_p(a_2(p)-1)=1$: the second summand $\sum_{r \neq \pm 1} j_r$ is a structurally independent unknown that could, in principle, cancel it exactly.
\end{remark}

% \begin{remark}[The case $d=2$]\label{rem:d2}
% For $d=2$, the full subgroup lattice of $G=(\quot\Z{p\Z})^2$ (trivial, the $p+1$ subgroups of order $p$, and $G$ itself) is finite and explicit, and the orbit-counting of Lemma~\ref{lem:reduction} can be pushed further: each of the $p+1$ order-$p$ subgroups of $G$ is the kernel of a functional $\varphi_{[a:b]}(x,y)=bx-ay$ for a point $[a:b]\in\mathbb P^1(\F_p)$, and (translating the argument of Lemma~\ref{lem:reduction} to the full, not just index-$p$, subgroup lattice) the exact value of $a_2(p)$ decomposes as a sum, over all $p+1$ points of $\mathbb P^1(\F_p)$, of independent-set counts of explicit circulant graphs on $\quot\Z{p\Z}$ together with the trivial orbit at $\emptyset$. This is consistent with, and refines, theorem~\ref{thm:p3} and Theorem~\ref{thm:p5} at $d=2$ (which recover the congruences satisfied by this exact sum modulo $p^2$), and underlies the numerically verified values $a_2(3)=34$ and $a_2(5)=25531$. We do not pursue the resulting closed form for general $p$ further here, as it plays no role in the results of Sections~\ref{sec:padic}-\ref{sec:Ed}.
% \end{remark}

\section{$p$-adic Numbers and Algebraicity}\label{sec:padic}

 This section studies what a hypothetical algebraicity of $\kappa_d$ would force, by comparing the archimedean sequence $\big \{a_d(n)^{\frac{1}{n^d}}\big\}_{n=1}^\infty$ - which converges to $\kappa_d$ (Theorem~\ref{thm:existence}) - against a fixed algebraic value, using the classical machinery from $p$-adic analysis and algebraic number theory. We start by looking at the $p$-adic companion of $\kappa$:

\begin{proposition}\label{prop:kappa_p_exists}
For every $d\ge1$ and for every prime $p$, the sequence $\{a_d(p^k)\}_{k=1}^\infty$ converges $p$-adically to $\kappa_d^{(p)}\in1+p\Zp$, with $a_d(p^k)\equiv\kappa_d^{(p)}\pmod{p^k}$ for every $k$. If, in addition, $p\in\Tame_d$ then $\nu_p(\kappa_d^{(p)}-1)=\nu_p(a_d(p)-1)$.
\end{proposition}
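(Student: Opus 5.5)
The plan is to show first that $\{a_d(p^k)\}_{k\ge1}$ is $p$-adically Cauchy, and then to read off both the congruence and the valuation from the telescoping decomposition already used in Proposition~\ref{prop:tame_iff}.

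\emph{Cauchy property.} By item 2 of Theorem~\ref{prop:Gauss}, $\nu_p\big(a_d(p^j)-a_d(p^{j-1})\big)\ge j$ for every $j\ge2$. For $l>k\ge1$ we write
\[
a_d(p^l)-a_d(p^k)=\sum_{j=k+1}^{l}\big(a_d(p^j)-a_d(p^{j-1})\big).
\]
The ultrametric inequality then gives $\nu_p\big(a_d(p^l)-a_d(p^k)\big)\ge k+1$. Hence the sequence is Cauchy in $\Zp$, and by completeness it has a limit, which we call $\kappa_d^{(p)}\in\Zp$.

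\emph{The congruence.} Letting $l\to\infty$ and using that $p^{k}\Zp$ is closed in $\Zp$, we get $\kappa_d^{(p)}\equiv a_d(p^k)\pmod{p^{k+1}}$. In particular $a_d(p^k)\equiv\kappa_d^{(p)}\pmod{p^k}$. By Corollary~\ref{cor:p_mod_1}, every term satisfies $a_d(p^k)\in 1+p\Z$. Since $1+p\Zp$ is closed, it follows that $\kappa_d^{(p)}\in 1+p\Zp$.

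\emph{The tame case.} Set $m=\nu_p(a_d(p)-1)$, which is finite because $a_d(p)\ne1$; indeed $a_d(p)\ge a_d(1)$-type counting gives at least $p+1>1$ independent sets. Choose $k\ge m$. From the congruence step, $\kappa_d^{(p)}-1=(a_d(p^k)-1)+\big(\kappa_d^{(p)}-a_d(p^k)\big)$. Here the second summand has valuation at least $k+1>m$. Tameness gives $\nu_p(a_d(p^k)-1)=m$. The non-archimedean triangle inequality (strict case) then yields $\nu_p(\kappa_d^{(p)}-1)=m$.

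The only point needing care is finiteness of $m$ together with the choice $k\ge m$, so that the error term is strictly smaller; otherwise the argument is routine.
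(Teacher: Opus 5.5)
Your proposal is correct and takes essentially the same route as the paper. Both telescope the congruences of Theorem~\ref{prop:Gauss} to get a $p$-adic Cauchy sequence, use Corollary~\ref{cor:p_mod_1} with closedness of $1+p\Zp$, and in the tame case compare $\kappa_d^{(p)}-1$ with $a_d(p^k)-1$ at a level where the error has valuation exceeding $m$ (the paper takes $k=m+1$). One harmless slip: passing $\nu_p\ge k+1$ to the limit uses closedness of $p^{k+1}\Zp$, not $p^{k}\Zp$, and that is equally true.
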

\begin{proof}
Fix $m\ge1$, by induction on $k\ge m$, using Theorem~\ref{prop:Gauss}, $a_d(p^k)\equiv a_d(p^m)\pmod{p^m}$ for every $k\ge m$, so $\{a_d(p^k)\}_{k=1}^\infty$ is Cauchy in $\Zp$, converging to $\kappa_d^{(p)}$ with $\kappa_d^{(p)}\equiv a_d(p^m)\pmod{p^m}$ for every $m$, taking $m=1$ and Corollary~\ref{cor:p_mod_1}, $\kappa_d^{(p)}\in1+p\Zp$. If $p\in\Tame_d$ then from Proposition~\ref{prop:tame_iff} we have that $r_p=\nu_p(a_d(p)-1)=\nu_p(a_d(p^k)-1)$ for every $k$, and by taking $m=r_p+1$, we can conclude that $\kappa_d^{(p)}-1\equiv a_d(p^{r_p+1})-1\pmod{p^{r_p+1}}$, and the right side has $\nu_p=r_p<r_p+1$, forcing $\nu_p(\kappa_d^{(p)}-1)=r_p$ too.
\end{proof}

\begin{example}\label{ex:A1alg}
For $d=1$ and $p\ne5$, by Example~\ref{ex:lucas} we have that $a_1(p^k)=\alpha^{p^k}+\beta^{p^k}$, where $\alpha,\beta\in\mathcal O_L^\times$ are the solutions to $x^2-x-1$ in a fixed embedding of $K=\Q(\sqrt5)$ into $L=K_\mathfrak p$ ( for some unramified $\mathfrak p\mid p$, as $p\ne5$, with $f=[L:\Qp]\in\{1,2\}$ that is either split or inert according to $p\equiv\pm1$ or $\pm2\pmod5$). If $f=1$ then  $\omega(\alpha)^{p^k}=\omega(\alpha)$ for every $k$, and so $\langle\alpha\rangle^{p^k}\to1$ (by Proposition~\ref{prop:teich} below), which gives us that $a_1(p^k)\to\omega(\alpha)+\omega(\beta)$. If $f=2$ then swapping $\alpha$ and  $\beta$ is the local Frobenius, giving us $\omega(\beta)=\omega(\alpha)^p$, and by a direct computation based upon the splitting on the parity of $k$ and upon the fact that $p^2\equiv1\pmod{p^2-1}\ge\ord(\omega(\alpha))$, we again get that $\omega(\alpha)^{p^k}+\omega(\beta)^{p^k}=\omega(\alpha)+\omega(\beta)$ for every $k$. Either way, $\kappa_1^{(p)}=\omega(\alpha)+\omega(\beta)$ is algebraic as a sum of two roots of unity.
\end{example}

\begin{remark}
    Example~\ref{ex:A1alg} shows us that $\kappa_d^{(p)}$ need not be  $\kappa_d$ "read $p$-adically under any fixed embedding", even in the $d=1$ case (which is known to be algebraic). Specifically, at $p=11$ (which is a split prime as $4^2\equiv5\pmod{11}$), from Proposition~\ref{prop:kappa_p_exists} we have that $\kappa_1^{(11)} \equiv 1 \mod11$, yet the solutions of the equation $x^2-x-1$ are either $4$ or $8 \mod 11$, and so $\kappa_1^{(11)}$ cannot be the image of $\kappa_1=\frac{1+\sqrt{5}}{2}$ under any embedding $\Q(\sqrt{5}) \hookrightarrow \Qp$. For $d>1$ the situation is more extreme, since $M_d(p^k)$ genuinely grows with $k$, so there is no fixed algebraic structure underlying the sequence, and no a priori reason for $\kappa_d^{(p)}$ to be algebraic at all.
\end{remark}

One might hope to build a $p$-adic analogue of $\kappa_d$ directly, as the $p$-adic limit of $a_d(p^k)^{\frac{1}{p^{kd}}}$, matching $\kappa_d=\lim_na_d(n)^{\frac{1}{n^d}}$. This fails termwise: Proposition~\ref{prop:rootexist} below shows that whenever $\nu_p(a_d(p)-1)=1$ (e.g.\ $p=2,3$, any $d$, by Corollary~\ref{thm:p2}), $a_d(p^k)$ has no $p^{kd}$-th root in $\Zp$ at all, so the individual terms are not even defined.

\begin{proposition}\label{prop:rootexist}
Let $p$ be odd. Then:
\begin{enumerate}
    \item $a_d(p^k)$ has a $p^{kd}$-th root in $\Zp$ if and only if $\nu_p(a_d(p^k)-1) \geq kd+1$. When it exists, it is unique and lies in $1+p\Zp$,
    \item If $\nu_p(a_d(p)-1)=1$ then $a_d(p^k)$ has no $p^{kd}$-th root in $\Zp$, for any $k\ge1$.
\end{enumerate}
In addition, if $u \in \Z_2^\times$ and $\nu_2(u-1)=1$ then $u \equiv 3 \mod 4$ and has no $2^r$-th root in $\Z_2$ for every $r \geq 1$. 
\end{proposition}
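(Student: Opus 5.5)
The plan is to reduce everything to the structure of the unit group of $\Zp$ for odd $p$. We know $\Zp^\times=\mu_{p-1}\times(1+p\Zp)$. Moreover, for odd $p$ the $p$-adic logarithm and exponential give an isomorphism $1+p\Zp\cong p\Zp$ sending $1+p^j\Zp$ onto $p^j\Zp$. We also use Corollary~\ref{cor:p_mod_1}, which places $u=a_d(p^k)$ in $1+p\Zp$. This lets us discard the Teichm\"uller part and work only with principal units.

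For item 1, put $N=kd$. First suppose $x\in\Zp$ satisfies $x^{p^N}=u$. Then $x$ is a unit, and reducing modulo $p$ gives $\bar x^{p^N}=\bar x=1$, using that Frobenius is the identity on $\F_p$. Hence $x\in1+p\Zp$. Applying $\log_p$ gives $\log_p u=p^N\log_p x\in p^{N+1}\Zp$, and so $u\in1+p^{N+1}\Zp$, i.e.\ $\nu_p(u-1)\ge N+1$. Conversely, if $\nu_p(u-1)\ge N+1$, we set $x=\exp_p(p^{-N}\log_p u)$. This is legitimate because $p^{-N}\log_p u\in p\Zp$, where $\exp_p$ converges for odd $p$. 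Then $x\in1+p\Zp$ and $x^{p^N}=u$.

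Uniqueness follows the same way. Any two roots in $\Zp$ lie in $1+p\Zp$ by the first step, and their ratio is a $p^N$-th root of unity in $1+p\Zp$. Since $1+p\Zp\cong p\Zp$ is torsion-free, that ratio is $1$.

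Item 2 then follows from item 1 together with a valuation computation. The hypothesis $\nu_p(a_d(p)-1)=1$ gives $p\in\Tame_d$ by Proposition~\ref{prop:tame_iff}(1), so $\nu_p(a_d(p^k)-1)=1$ for every $k$. Since $kd+1\ge2>1$, the criterion of item 1 fails.

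For the $p=2$ statement, $\nu_2(u-1)=1$ means $u=1+2t$ with $t$ odd, so $u\equiv3\pmod4$. If $x^{2^r}=u$ with $r\ge1$, then $u=(x^{2^{r-1}})^2$ is a square of an element of $\Z_2^\times$. Every odd square is $\equiv1\pmod8$, hence $\equiv1\pmod4$, which is a contradiction.

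The only delicate point is the correct normalization of the convergence domains of $\log_p$ and $\exp_p$, together with the isomorphism $1+p^j\Zp\cong p^j\Zp$ for odd $p$. I would cite \cite{Serre} or \cite{Gouvea} for these facts rather than reprove them. If one prefers to avoid logarithms, the converse direction can instead be done by iterating Hensel's lemma. The approach is to show inductively that every element of $1+p^{j+1}\Zp$ has a $p$-th root in $1+p^{j}\Zp$ for $j\ge1$, using that $(1+p^jy)^p\equiv1+p^{j+1}y\pmod{p^{j+2}}$ when $p$ is odd.
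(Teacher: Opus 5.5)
Your proposal is correct and takes essentially the paper's route. You discard the prime-to-$p$ root-of-unity part (via Frobenius on $\F_p$, where the paper uses the Teichm\"uller decomposition with $\gcd(p^{kd},p-1)=1$), use that $\log_p$ and $\exp_p$ identify $1+p^j\Zp$ with $p^j\Zp$ for odd $p$, and get item 2 from Proposition~\ref{prop:tame_iff}. Your explicit mod-$8$ argument for $p=2$ is cleaner than the paper's ``similar'' remark, which would not carry over literally since $\log_2$ is not injective on $1+2\Z_2$.
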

\begin{proof}
Given some $u \in 1+p\Zp$, write $x=u-1$ and $s=\nu_p(x)\ge1$. In the power series $\log_p(1+x)=\sum_{j\ge1}(-1)^{j-1}\frac{x^j}{j}$, the $j$-th term has valuation $js-\nu_p(j)$, and for $j\ge2$ we have that $\nu_p(j)\le\log_pj<j-1\le(j-1)s$ (as $p$ is odd), and so $js-\nu_p(j)>s$, only $j=1$ attains the minimum, giving $\nu_p(\log_p u)=s$. Now, for the root criterion (assuming $p$ is odd), we have that any root $v$ of $v^{p^r}=u$ decomposes as $v=\xi w$ where  $\xi$ is a $(p-1)$-th root of unity and $w\in1+p\Zp$. Since $\xi^{p^e}w^{p^r}=u\in1+p\Zp$, we must have that $\xi^{p^r}=1$, and as $\gcd(p^r,p-1)=1$, we must have that $\xi=1$. On $1+p\Zp$, the function $\log_p$ is a group isomorphism onto $(p\Zp,+)$ (as $p$ is odd), so $w\mapsto w^{p^r}$ corresponds to injective multiplication by $p^r$, giving uniqueness, and $v=\exp\left(\frac{\log_p(u)}{p^r}\right)$ is well-defined and solves the equation $v^{p^r}=u$ exactly when $\frac{\log_p(u)}{p^r}\in p\Zp$, that is,  $\nu_p(u-1)=\nu_p(\log_p u)\ge r+1$. Thus, for $u\in1+p\Zp$ we have that $\nu_p(\log_p u)=\nu_p(u-1)$, and so for every $r\ge1$ we have that $u$ has a $p^r$-th root in $\Zp$ iff $p^{r+1}\mid u-1$, in which case the root is unique and lies in $1+p\Zp$. The $p=2$ case is similar. Therefore we can apply this argument to  $u=a_d(p^k)$ and $r=kd$. (since then $\nu_p(a_d(p^k)-1)=1<kd+1$ by Proposition~\ref{prop:tame_iff}). 
\end{proof}

The following proposition shows us how we can use Proposition~\ref{prop:trace} to construct a matrix analogue of Proposition~\ref{prop:kappa_p_exists} based upon the Arnold-Zarelua congruence for traces of a fixed matrix, as in Remark~\ref{rem:Arnold}:

\begin{proposition}\label{prop:beta}
For every prime $p$ and every $d \geq 1$ we have that the sequence $\big\{\tr(M_d(p^k)^{p^j})\big\}_{j=1}^\infty$ converges $p$-adically to $\beta_d^{(p)}(k)\in\Zp$, with $\tr(M_d(p^k)^{p^j})\equiv\beta_d^{(p)}(k)\pmod{p^j}$ for every $j$. In addition, we have that $\kappa_d^{(p)}\equiv\beta_d^{(p)}(k)\pmod{p^k}$ for every $k$.
\end{proposition}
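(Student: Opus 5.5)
The plan is to treat $k$ as fixed, so that $A=M_d(p^k)$ is a single fixed $0/1$ integer matrix. I will first prove the classical Gauss (Arnold--Zarelua) congruence
\[
\tr\big(A^{p^j}\big)\equiv\tr\big(A^{p^{j-1}}\big)\pmod{p^j}\qquad(j\ge1)
\]
for this $A$. Remark~\ref{rem:Arnold} lets us cite \cite{Arnold2006,Zarelua2008} for it. Instead, I would give the same orbit-counting proof used for Theorem~\ref{prop:Gauss}, in the spirit of Remark~\ref{rem:gener_action}, which keeps the argument self-contained.

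Let $\Gamma$ be the graph with adjacency matrix $A$ (loops allowed). Let $X_j$ be the set of closed walks $(I_0,\dots,I_{p^j-1})$ of length $p^j$ in $\Gamma$, indexed cyclically. Then $|X_j|=\tr(A^{p^j})$, exactly as in the proof of Proposition~\ref{prop:trace}. The cyclic group $G_j=\quot{\Z}{p^j\Z}$ acts on $X_j$ by rotating indices. The subgroup $K=p^{j-1}G_j$ has order $p$. A walk is fixed by $K$ exactly when it is $p^{j-1}$-periodic, and such walks are in bijection with closed walks of length $p^{j-1}$. Hence $|\Fix_K(X_j)|=\tr(A^{p^{j-1}})$; for $j=1$ this reads $\tr(A)$, counting the constant walks at looped vertices.

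Since $G_j$ is abelian, $\Fix_K(X_j)$ is a union of $G_j$-orbits. For any walk outside it, its stabilizer $S$ does not contain $K$. Lemma~\ref{lem:subgroup} with $d=1$ then gives $p^j\mid[G_j:S]$. For $j=1$ this is immediate, since the stabilizer must be trivial. Summing orbit sizes yields the congruence.

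Next, induction on the congruence gives $\tr(A^{p^{j'}})\equiv\tr(A^{p^j})\pmod{p^j}$ for all $j'\ge j$. So the sequence $\{\tr(M_d(p^k)^{p^j})\}_j$ is Cauchy in $\Zp$. Its limit $\beta_d^{(p)}(k)$ satisfies $\beta_d^{(p)}(k)\equiv\tr(M_d(p^k)^{p^j})\pmod{p^j}$ for every $j$, exactly as in the proof of Proposition~\ref{prop:kappa_p_exists}.

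For the comparison with $\kappa_d^{(p)}$, specialize to $j=k$. Since $p^k\ge2$, Proposition~\ref{prop:trace} gives $\tr(M_d(p^k)^{p^k})=a_d(p^k)$. Hence $\beta_d^{(p)}(k)\equiv a_d(p^k)\pmod{p^k}$. Proposition~\ref{prop:kappa_p_exists} gives $a_d(p^k)\equiv\kappa_d^{(p)}\pmod{p^k}$, and chaining the two congruences gives the claim.

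I expect no serious obstacle. The only points needing care are the following.
\begin{itemize}
\item Matching the length-$p^j$ walk count with the rotation action correctly, including loops on the diagonal of $A$ (e.g.\ $I=\emptyset$ is compatible with itself).
\item Checking that the $K$-fixed walks are exactly the periodic ones, so the count is $\tr(A^{p^{j-1}})$ and not something off by a boundary convention. This is the analogue of the $a_d(1)$ issue in Remark~\ref{rem:k1exception}, and it does not arise here because $A$ is a fixed matrix.
\end{itemize}
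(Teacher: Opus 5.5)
Your proof is correct and follows the paper's route. Both arguments use the fixed-matrix trace congruence for $A=M_d(p^k)$, telescope it into a Cauchy sequence in $\Zp$, and then specialize at $j=k$ via Proposition~\ref{prop:trace} and Proposition~\ref{prop:kappa_p_exists}. The one difference is that the paper just cites the Arnold--Zarelua congruence, while you prove it with the rotation action on closed walks, as in Theorem~\ref{prop:Gauss}. That makes your argument self-contained, but it uses that $A$ has nonnegative integer entries; the cited congruence holds for every integer matrix, which is the generality the paper invokes right after the proposition.
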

\begin{proof}
By Proposition~\ref{prop:trace} we have that $a_d(p^k)=\tr(M_d(p^k)^{p^k})$, and so by telescoping the Arnold-Zarelua congruences $\tr(A^{p^{i+1}})\equiv\tr(A^{p^i})\pmod{p^{i+1}}$ for $i\ge k$ gives us that $\tr(A^{p^j})\equiv\tr(A^{p^k})\pmod{p^k}$ for every $j\ge k$, letting $j\to\infty$, we get that $\beta_d^{(p)}(k)\equiv a_d(p^k)\pmod{p^k}$, and Proposition~\ref{prop:kappa_p_exists} gives $\kappa_d^{(p)}\equiv a_d(p^k)\pmod{p^k}$ as well.
\end{proof}

\begin{remark}\label{rem:beta_teich}
The limit $\beta_d^{(p)}(k)$ of Proposition~\ref{prop:beta} has an explicit description that shows it is, in a precise sense, the $p$-adic shadow of an honest algebraic number, even though $M_d(p^k)$ is not a fixed matrix as $k$ varies. Write $\overline{M_d(p^k)}\in\Mat_r(\F_p)$ for the reduction of $M_d(p^k)$ mod $p$, and let $\lambda_1,\dots,\lambda_r\in\overline{\F_p}$ be its non-zero eigenvalues with multiplicity, each $\lambda_i$ lying in a finite field $\F_{p^{f_i}}$ for some $f_i$. By performing a Jordan decomposition on $\overline{M_d(p^k)}$ we can write $\overline{M_d(p^k)}=D+N$ where $D$ semisimple and $N$ nilpotent that satisfy $DN=ND$, the Frobenius identity $(x+y)^{p^j}=x^{p^j}+y^{p^j}$ for commuting $x,y$ in characteristic $p$ gives $\overline{M_d(p^k)}^{p^j}=D^{p^j}+N^{p^j}=D^{p^j}$ once $p^j$ exceeds the nilpotency index of $N$ (which happens for all large $j$, as $r$ is fixed), so $\tr(\overline{M_d(p^k)}^{p^j})=\sum_i\lambda_i^{p^j}$ for all large $j$. Zarelua's analysis \cite{Zarelua2008} refines this mod-$p$ picture to the full $p$-adic limit $\beta_d^{(p)}(k)\ =\ \sum_{i=1}^r\omega(\lambda_i)$, which is the sum of the Teichm\"uller lifts of the non-zero eigenvalues of $\overline{M_d(p^k)}$ (each $\omega(\lambda_i)$ a root of unity of order dividing $p^{f_i}-1$, lying in the unramified extension $\Qp(\xi_{p^{f_i}-1})$). In particular, by choosing some $\iota_p \colon \overline\Q\hookrightarrow\overline{\Q}_p$ compatible with the residue fields involved, $\beta_d^{(p)}(k)$ is the $p$-adic image of the honest algebraic number $\sum_i\xi_i\in\Q(\xi_{p^{f_1}-1},\dots,\xi_{p^{f_r}-1})\subset\overline\Q_p$ obtained by summing the corresponding complex roots of unity. Note that Example~\ref{ex:A1alg} is exactly the case where $d=1$ and $r=2$. \\
\end{remark}

Proposition~\ref{prop:beta} needs no combinatorial input at all - it applies to any fixed integer matrix, not only to $M_d(p^k)$. In particular it attaches a $p$-adic invariant to every finite-size transfer-matrix approximation of $\kappa_d$ used in the literature to obtain rigorous numerical bounds, including those built from strips rather than tori. For $d=2$, Pavlov studies the strip entropies $h_n=\log\rho(\widehat M_2(n))$, where $\widehat M_2(n)$ is our transfer matrix construction applied to the alphabet $\Ind(P_n)$ of the length-$n$ \textbf{path} (rather than our cycle $T_1(n)$), the general theory of $\Z^2$ shifts of finite type gives $\frac{h_n}{n}\to\log\kappa_2$, and Pavlov proves the sharper statement $h_{n+1}-h_n\to\log\kappa_2$, with at least exponential convergence rate \cite{Pavlov2012}. Writing $\eta_d(n)=\rho(\widehat M_d(n))$ for the analogous path-alphabet spectral radius in general dimension $d$ (so $\eta_2=\rho(\widehat M_2)$ above), the weaker, general-$d$ background fact already follows from our own machinery:

\begin{proposition}\label{prop:mu}
For every $d\ge2$ we have that $\eta_d(n)^{\frac{1}{n^{d-1}}}\to\kappa_d$ as $n\to\infty$.
\end{proposition}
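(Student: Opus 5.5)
The plan is to sandwich $\eta_d(n)$ between quantities already controlled by Theorem~\ref{thm:existence} and Proposition~\ref{prop:Perron_Frob}. Here $\widehat M_d(n)$ is the $0/1$ matrix indexed by $\Ind(B_{d-1}(n))$ (independent sets of the $(d-1)$-dimensional grid, the general-$d$ analogue of the path), with entry $1$ iff $I\cap J=\emptyset$. It is real, symmetric and nonnegative, so $\eta_d(n)$ is its Perron eigenvalue and the trace bounds \eqref{eq:traceupper}--\eqref{eq:tracelower} apply verbatim.

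\textbf{Lower bound.} As observed in the proof of item 3 of Proposition~\ref{prop:Perron_Frob}, we have $\Ind(T_{d-1}(n))\subseteq\Ind(B_{d-1}(n))$ on the common vertex set. Since the compatibility relation $I\cap J=\emptyset$ does not depend on the ambient graph, $M_d(n)$ is a principal submatrix of $\widehat M_d(n)$. The principal-submatrix fact from that same proof then gives $\lambda_d(n)\le\eta_d(n)$. Hence $\liminf\eta_d(n)^{1/n^{d-1}}\ge\kappa_d$ by item 4 of Proposition~\ref{prop:Perron_Frob}.

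\textbf{Upper bound.} The same principal-submatrix argument gives $\Ind(B_{d-1}(n))\subseteq\Ind(T_{d-1}(n+1))$, so $\eta_d(n)\le\lambda_d(n+1)$. Then
\[
\eta_d(n)^{1/n^{d-1}}\le\Big[\lambda_d(n+1)^{1/(n+1)^{d-1}}\Big]^{(n+1)^{d-1}/n^{d-1}}\longrightarrow\kappa_d,
\]
using item 4 of Proposition~\ref{prop:Perron_Frob} and the fact that the exponent tends to $1$. Squeezing the two bounds gives the claim.

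As an alternative upper bound, useful if one prefers to avoid the embedding, one could read $\tr(\widehat M_d(n)^m)$ as the number of independent sets of $B_{d-1}(n)\times C_m$ and argue as in Theorem~\ref{thm:existence}. The embedding route is shorter, so I would use it.

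The only real obstacle is bookkeeping: checking that the inclusions of index sets really are \emph{principal} submatrices with identical entries, including the degenerate small cases $n=1,2$. These do not affect the limit.
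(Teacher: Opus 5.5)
Your proposal is correct and matches the paper's proof: both use the two principal-submatrix inclusions to get $\lambda_d(n)\le\eta_d(n)\le\lambda_d(n+1)$. Both then squeeze $\eta_d(n)^{1/n^{d-1}}$ using item 4 of Proposition~\ref{prop:Perron_Frob}, noting that the exponent $(n+1)^{d-1}/n^{d-1}$ tends to $1$.
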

\begin{proof}
Since $B_{d-1}(n)$ embeds into $(\quot{\Z}{(n+1)\Z})^{d-1}$ without creating extra adjacencies (Theorem~\ref{thm:existence}), every $B_{d-1}(n)$-independent set is also $T_{d-1}(n+1)$-independent, so $\Ind(B_{d-1}(n))\subseteq\Ind(T_{d-1}(n+1))$, and since $I\cap J=\emptyset$ means the same thing in either ambient graph, $\widehat M_d(n)$ is exactly the principal submatrix of $M_d(n+1)$ on this index subset. Symmetrically, $\Ind(T_{d-1}(n))\subseteq\Ind(B_{d-1}(n))$ (the torus has more edges, hence fewer independent sets), so $M_d(n)$ is a principal submatrix of $\widehat M_d(n)$. For symmetric nonnegative matrices, restricting to a principal submatrix cannot increase the spectral radius (extend the submatrix's Perron eigenvector by zero in the Rayleigh quotient $\rho(A)=\max_{x\ge0,x\ne0}\left(\frac{x^{\intercal}Ax}{x^\intercal x}\right)$), so
\[
\lambda_d(n)\ \le\ \eta_d(n)\ \le\ \lambda_d(n+1).
\]
Dividing logarithms by $n^{d-1}$ and using Proposition~\ref{prop:Perron_Frob} ($\lambda_d(n)^{\frac{1}{n^{d-1}}}\to\kappa_d$, so also $\lambda_d(n+1)^{\frac{1}{n^{d-1}}}=\big(\lambda_d(n+1)^{\frac{1}{(n+1)^{d-1}}}\big)^{\frac{(n+1)^{d-1}}{n^{d-1}}}\to\kappa_d$, as $\frac{(n+1)^{d-1}}{n^{d-1}}\to1$) sandwiches $\eta_d(n)^{\frac{1}{n^{d-1}}}$ between two sequences converging to $\kappa_d$.
\end{proof}

Since $\widehat M_d(n)$ is a fixed integer matrix for each $n$, Proposition~\ref{prop:beta} attaches to it a canonical $p$-adic companion, exactly as it did for $M_d(p^k)$:

\begin{proposition}\label{prop:betaM}
For every $d\ge2$, for every $n\ge2$, and for every prime $p$, we have that  $\beta(\widehat M_d(n))=\lim_j^{(p)}\tr(\widehat M_d(n)^{p^j})$ exists in $\Zp$, and equals the sum of Teichm\"uller lifts of the eigenvalues of $\widehat M_d(n)\bmod p$.
\end{proposition}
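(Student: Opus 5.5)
The plan is to prove a general statement for an arbitrary fixed matrix $A\in\Mat_r(\Z)$ and then specialize to $A=\widehat M_d(n)$. That $\widehat M_d(n)$ is a fixed integer square matrix for each $n$ is immediate from its definition: its entries are $0/1$, indexed by $\Ind(P_n)$ along the transverse directions.

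\textbf{Existence of the limit.} Convergence follows exactly as in Proposition~\ref{prop:beta}, whose proof used nothing about $M_d(p^k)$ beyond integrality. The Arnold--Zarelua congruences $\tr(A^{p^{i+1}})\equiv\tr(A^{p^i})\pmod{p^{i+1}}$ (Remark~\ref{rem:Arnold}, \cite{Zarelua2008}) make $\{\tr(A^{p^j})\}_j$ Cauchy in $\Zp$, so the limit $\beta(A)$ exists.

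\textbf{Identification of the limit.} I would avoid quoting Zarelua's refinement and instead argue directly. Choose a finite extension $L/\Qp$ containing all eigenvalues $\theta_1,\dots,\theta_r$ of $A$. These are algebraic integers, so $\theta_i\in\mathcal O_L$. Split the eigenvalues into units and nonunits.
\begin{itemize}
\item For a nonunit, $v_L(\theta_i)>0$, so $\theta_i^{p^j}\to0$.
\item For a unit, write $\theta_i=\omega(\theta_i)\langle\theta_i\rangle$ with $\omega(\theta_i)$ of order dividing $q-1$, where $q=p^f$ is the residue cardinality. Then $\langle\theta_i\rangle^{p^j}\to1$, since a principal unit raised to $p^j$ tends to $1$ (the same fact invoked in Example~\ref{ex:A1alg}).
\end{itemize}
Hence $\tr(A^{p^j})=\sum_i\theta_i^{p^j}$ is asymptotic to $\sum_{\theta_i\text{ unit}}\omega(\theta_i)^{p^j}$.

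\textbf{Periodicity and the main obstacle.} The remaining issue is that $\omega(\theta_i)^{p^j}$ need not equal $\omega(\theta_i)$; it is periodic in $j$ with period dividing $f$. This is the main obstacle. I would handle it through Galois symmetry. The multiset of unit eigenvalues is stable under $\Gal(L/\Qp)$, because the characteristic polynomial of $A$ lies in $\Z[x]$. Frobenius acts on Teichmüller lifts by $\omega\mapsto\omega^p$, so the multiset $\{\omega(\theta_i)\}$ is stable under $x\mapsto x^p$. Consequently $\sum_i\omega(\theta_i)^{p^j}=\sum_i\omega(\theta_i)$ for every $j$. The delicate point is the unramified-versus-ramified case. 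For ramified $L$ I would reduce to residue fields: the map $x\mapsto x^p$ permutes the multiset of reductions $\bar\theta_i$ (the roots of $\bar\chi_A\in\F_p[x]$ are stable under Frobenius with multiplicity), and $\omega$ depends only on the reduction.

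\textbf{Conclusion.} Reducing modulo the maximal ideal, the unit eigenvalues reduce exactly to the nonzero eigenvalues of $A\bmod p$ with multiplicity, since $\bar\chi_A$ is the characteristic polynomial of $\bar A$. Therefore $\beta(A)=\sum\omega(\lambda)$, summed over the nonzero eigenvalues $\lambda$ of $\widehat M_d(n)\bmod p$, as claimed.
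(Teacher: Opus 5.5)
Your proof is correct. In the identification step it takes a genuinely different route from the paper.

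The paper gives no separate argument for this proposition. Existence of the limit comes from the Arnold--Zarelua congruences, exactly as in Proposition~\ref{prop:beta}. The identification of the limit with $\sum\omega(\lambda_i)$ is taken from Zarelua's analysis, and Remark~\ref{rem:beta_teich} only sketches the characteristic-$p$ picture via the Jordan decomposition $\overline{A}=D+N$, which on its own pins the limit down only modulo $p$.

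You instead work with the eigenvalues $\theta_i$ of $A$ in a finite extension $L/\Qp$:
\begin{itemize}
\item nonunit eigenvalues satisfy $\theta_i^{p^j}\to0$;
\item unit eigenvalues factor as $\omega(\theta_i)\langle\theta_i\rangle$ with $\langle\theta_i\rangle^{p^j}\to1$;
\item the periodicity of $\omega(\theta_i)^{p^j}$ is neutralized because the multiset of nonzero roots of $\overline{\chi_A}\in\F_p[x]$ is stable under $x\mapsto x^p$ with multiplicity, and $\omega$ is multiplicative and depends only on the residue.
\end{itemize}

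This makes the full $p$-adic identification self-contained, and it proves convergence by itself. Indeed, $\tr(A^{p^j})-\sum_{\theta_i\ \mathrm{unit}}\omega(\theta_i)\to0$ in $L$, and the limit lies in $\Zp$ because $\Zp$ is closed in $L$. What the Arnold--Zarelua route adds is the uniform rate $\tr(A^{p^j})\equiv\beta(A)\pmod{p^j}$. Your eigenvalue argument does not give this directly, since how fast $\langle\theta_i\rangle^{p^j}\to1$ depends on the ramification of $L$ and on $v_L(\langle\theta_i\rangle-1)$.

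The ramified case is also less delicate than you suggest. Your residue-field argument works for any $L$. The Galois version works in general too: for Galois $L/\Qp$ the map $\Gal(L/\Qp)\to\Gal(k_L/\F_p)$ is surjective, so some $\sigma$ lifts Frobenius and satisfies $\sigma(\omega(\theta))=\omega(\sigma\theta)=\omega(\theta)^p$.
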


So every finite-size approximant to $\kappa_d$ - whether built from tori ($\kappa_d^{(p)}$, via $M_d(p^k)$) or from strips ($\beta(\widehat M_d(n))$, via Pavlov's $\widehat M_d(n)$) - carries the same kind of algebraic $p$-adic analogoue.

\begin{remark}
It is not clear whether $\beta(\widehat M_d(n))$ converge $p$-adically as $n\to\infty$. Pavlov's sharpened, exponentially fast archimedean convergence $h_{n+1}-h_n\to\log\kappa_2$ \cite{Pavlov2012}, which was proven for $d=2$ by percolation-theoretic and ergodic-theoretic methods well beyond the elementary combinatorics we use here,  would, if it held $p$-adically too, give a strip-based construction of a $p$-adic limit for $\kappa_d$ genuinely independent of $\kappa_d^{(p)}$.
\end{remark}

We now turn to proving an approximation theorem about $\kappa_d$ that provides for us two algebraicity conditions. Before doing that, we recall some basic properties of the absolute logarithmic Weil height $h$ on $\overline\Q$ (see e.g.\ \cite{BombieriGubler,WaldschmidtDA} for proofs):
\begin{definition}
    Let $\gamma$ be a non-zero algebraic number of degree $g=[\Q(\gamma):\Q]$. Let  $\gamma=\gamma_1,\dots,\gamma_g$ be the conjugates of $\gamma$. Let $a_0X^g+\cdots+a_g\in\Z[X]$ be the  a minimal polynomial of $\gamma$ such that $a_0>0$ and $\gcd(a_0, \dots, a_g)=1$.
    \begin{enumerate}
        \item The \textbf{Mahler measure} of $\gamma$ is defined to be $M(\gamma)=a_0\prod_{i=1}^g\max(1,|\gamma_i|)$. 
        \item The \textbf{logarithmic Weil height} of $\gamma$ is defined to be $h(\gamma)=\tfrac1g\log M(\gamma)$. 
    \end{enumerate}
\end{definition}

\begin{remark}
    Observe that $h(\gamma) \geq 0$ for every $\gamma$ and that $h(\gamma_1\pm\gamma_2)\le h(\gamma_1)+h(\gamma_2)+\log2$ for every $\gamma_1$ and $\gamma_2$. 
\end{remark}
%for a nonzero algebraic number $\gamma$ of degree $\delta=$ with ,  and . We use: $h(m)=\log\max(1,|m|)$ for $m\in\Z$, $h(\alpha^k)=|k|\,h(\alpha)$ for $k\in\Z$, and the subadditivity .

\begin{lemma}\label{lem:liouville}
For every nonzero algebraic number $\gamma$ we have that $-\log|\gamma|\ \le\ [\Q(\gamma):\Q]\cdot h(\gamma)$.
\end{lemma}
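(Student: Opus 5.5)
The plan is to write $g=[\Q(\gamma):\Q]$ and to compare $|\gamma|$ with the Mahler measure of the reciprocal $\gamma^{-1}$, rather than of $\gamma$ itself. The single fact used about heights is the identity $h(\gamma^{-1})=h(\gamma)$. I would either cite it from \cite{BombieriGubler}, or check it directly. For the direct check, let $a_0X^g+\cdots+a_g$ be the primitive minimal polynomial of $\gamma$, with conjugates $\gamma_1=\gamma,\dots,\gamma_g$. The reversed polynomial $a_gX^g+\cdots+a_0$ (up to sign, so that the leading coefficient is positive) is the primitive minimal polynomial of $\gamma^{-1}$. Here $a_g\neq0$ because $\gamma\neq0$, and its roots are the $\gamma_i^{-1}$.

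Next I would verify the Mahler measure identity $M(\gamma^{-1})=M(\gamma)$. Since $a_g=\pm a_0\prod_i\gamma_i$, we have
\[
M(\gamma^{-1})=|a_g|\prod_{i=1}^g\max(1,|\gamma_i|^{-1})=a_0\prod_{i=1}^g|\gamma_i|\max(1,|\gamma_i|^{-1})=a_0\prod_{i=1}^g\max(|\gamma_i|,1)=M(\gamma).
\]
Dividing logarithms by $g$ gives $h(\gamma^{-1})=h(\gamma)$.

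To conclude, I would use that every factor in the definition of $M(\gamma^{-1})$ is at least $1$: $|a_g|\ge1$ because $a_g$ is a nonzero integer, and each $\max(1,|\gamma_i|^{-1})\ge1$. Keeping only the $i=1$ factor therefore gives
\[
|\gamma|^{-1}\le\max(1,|\gamma|^{-1})\le M(\gamma^{-1})=M(\gamma)=e^{g\,h(\gamma)}.
\]
Taking logarithms yields $-\log|\gamma|\le g\,h(\gamma)$, as claimed.

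The only step needing care is identifying the reversed polynomial as the primitive minimal polynomial of $\gamma^{-1}$. It is irreducible because reversal preserves irreducibility when the constant term is nonzero. It is primitive because it has the same set of coefficients. Its leading coefficient is $a_g$ up to sign, which is a nonzero integer. Everything else is the formal computation above.
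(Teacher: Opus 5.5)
Your argument is correct and is essentially the paper's proof. Both rest on the single fact that the constant coefficient satisfies $|a_g| = a_0\prod_{i}|\gamma_i|\ge 1$: the paper uses it directly to get $1\le |a_g|\le |\gamma|\,M(\gamma)$, while you route the same computation through the invariance $M(\gamma^{-1})=M(\gamma)$ and the bound $|\gamma|^{-1}\le M(\gamma^{-1})$, which costs the extra (correct) check that the reversed polynomial is the primitive minimal polynomial of $\gamma^{-1}$.
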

\begin{proof}
With notation as above, the constant term of the minimal polynomial satisfies $a_0\prod_i\gamma_i=(-1)^\delta a_\delta$, and $a_\delta\ne0$ (as $\gamma\ne0$ is a root of the irreducible polynomial, which is therefore not divisible by $X$), so $|a_\delta|\ge1$. Hence
\[
1 \le |a_\delta| = a_0\prod_{i=1}^\delta|\gamma_i| = a_0|\gamma_1|\prod_{i\ge2}|\gamma_i| \le a_0|\gamma_1|\prod_{i\ge2}\max(1,|\gamma_i|) \le |\gamma_1|\cdot a_0\prod_{i=1}^\delta\max(1,|\gamma_i|) = |\gamma_1|\cdot M(\gamma),
\]
using $\max(1,|\gamma_1|)\ge1$ to insert the missing factor. So $|\gamma|=|\gamma_1|\ge M(\gamma)^{-1}=e^{-\delta h(\gamma)}$, i.e.\ $-\log|\gamma|\le\delta h(\gamma)$.
\end{proof}

\begin{lemma}\label{lem:betaheight}
For $n\ge1$, write $\gamma_n=a_d(n)^{\frac{1}{n^d}}$ (the positive real $n^d$-th root). Then:
\begin{enumerate}
    \item $\gamma_n\in(1,2]$,
    \item $\gamma_n$ is an algebraic integer all of whose conjugates have absolute value exactly $\gamma_n$, 
    \item $h(\gamma_n)=\log\gamma_n\in(0,\log2]$.
\end{enumerate}
\end{lemma}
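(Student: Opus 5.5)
Writing $N=n^d$ and $A=a_d(n)$, the number $\gamma_n$ is the positive real root of $X^N-A$, so everything reduces to facts about this binomial. The plan is to treat the three items in order; item 1 is elementary counting, item 2 comes from the roots of $X^N-A$, and item 3 is a direct Mahler measure computation.

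\textbf{Item 1.} For the lower bound it suffices to have $A\ge2$: the empty set and any singleton are independent, so for $n\ge2$ we get $A\ge 1+n^d\ge2$, and $a_d(1)=2$ by convention. Hence $\gamma_n>1$. For the upper bound, every independent set is a subset of the $n^d$ vertices, so $A\le 2^{n^d}$ and $\gamma_n\le2$.

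\textbf{Item 2.} Since $X^N-A$ is monic in $\Z[X]$ and $\gamma_n$ is a root, $\gamma_n$ is an algebraic integer. Its minimal polynomial $f$ divides $X^N-A$, so every conjugate of $\gamma_n$ is some root $\zeta\gamma_n$ with $\zeta^N=1$. Each such root has absolute value $\gamma_n$.

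\textbf{Item 3.} Take $f$ monic with integer coefficients (Gauss's lemma), so $a_0=1$, and let $g=\deg f$. By item 2 every conjugate has modulus $\gamma_n>1$, so
\[
M(\gamma_n)=\prod_{i=1}^g\max(1,|\gamma_i|)=\gamma_n^{\,g}.
\]
Therefore $h(\gamma_n)=\tfrac1g\log M(\gamma_n)=\log\gamma_n$, which lies in $(0,\log2]$ by item 1.

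The only step needing care is the primitive-normalization in the Mahler measure definition. Because $\gamma_n$ is an algebraic integer, the primitive minimal polynomial with $a_0>0$ is monic, so $a_0=1$ and nothing is hidden there. Irreducibility of $X^N-A$ is never needed, since only the moduli of the conjugates enter.
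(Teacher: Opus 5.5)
Your proposal is correct and follows essentially the same route as the paper: the bounds $2\le a_d(n)\le 2^{n^d}$, the observation that every conjugate of $\gamma_n$ is a root $\zeta\gamma_n$ of $X^{n^d}-a_d(n)$, and the Mahler measure computation $M(\gamma_n)=\gamma_n^{\deg\gamma_n}$ with leading coefficient $1$. Your direct subset count for $a_d(n)\le 2^{n^d}$ is slightly cleaner than the paper's appeal to Lemma~\ref{lem:naive_bound}, which as stated covers only even $n$.
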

\begin{proof}
Since $\emptyset$ and every singleton are independent and $n^d\ge1$, we have that $a_d(n)\ge2$, and so $\gamma_n>1$. In addition, from Lemma~\ref{lem:naive_bound} we have that $a_d(n)\le2^{n^d}$, and so $\gamma_n\le2$. By definition $\gamma_n$ is a root of the monic integer polynomial $X^{n^d}-a_d(n)$, hence an algebraic integer, every root of this polynomial has the form $\gamma_n\cdot\xi$ for an $n^d$-th root of unity $\xi$, hence modulus exactly $\gamma_n$, and in particular every conjugate of $\gamma_n$ (being among these roots) has modulus $\gamma_n$. So, with $\delta=\deg(\gamma_n)$, we have that $M(\gamma_n)=\prod_{i=1}^\delta\max(1,\gamma_n)=\gamma_n^\delta$ (as $\gamma_n>1$, and $\gamma_n$ is an algebraic integer so $a_0=1$), giving $h(\gamma_n)=\tfrac1\delta\log(\gamma_n^\delta)=\log\gamma_n\in(0,\log2]$.
\end{proof}

\begin{remark}
Recall that Northcott's theorem says that algebraic numbers of bounded degree and bounded logarithmic Weil height form a finite set (see~\cite{Northcott1949}). In particular, if $\kappa_d$ is algebraic for infinitely many values of $d$, then along those dimensions, the degrees and logarithmic heights cannot both remain bounded. This is true since from  Theorem~\ref{thm:decreasing} we have that $\kappa_d\to\sqrt2$ with $\kappa_d>\sqrt2$ for every finite $d$, but a sequence whose values are in a finite set cannot converge to $\sqrt2$ without eventually equaling $\sqrt2$, which is impossible.
\end{remark}

We are now ready to prove our approximation theorem:

\begin{theorem}\label{thm:master}
Suppose $\kappa_d$ is algebraic with $D=[\Q(\kappa_d):\Q]$ and $h_0=h(\kappa_d)$. Then:
\begin{enumerate}
\item  \textup{(Archimedean gap bound.)} For every $n\ge1$ with $a_d(n)\ne\kappa_d^{n^d}$,
\[
-\frac1{n^d}\log\big|a_d(n)-\kappa_d^{n^d}\big|\ \le\ D(\log2+h_0)+\frac{D\log2}{n^d}.
\]
\item \textup{(Quantitative degree growth.)} Set $C_1=h_0+2\log2$. For every $n$ such that $a_d(n)\ne\kappa_d^{n^d}$ we have that,
\[
\deg(\gamma_n)\ \ge\ \frac{-\log|\gamma_n-\kappa_d|}{D\,C_1} \ge\ \frac{\log n - \log(2\sqrt3\,d\log2)}{D\,C_1}
\]
\end{enumerate}
\end{theorem}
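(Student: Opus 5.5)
The plan for both items is to apply Lemma~\ref{lem:liouville} to a suitable nonzero algebraic number in a field of controlled degree, and to bound its height using the elementary height inequalities recorded just before it. For item 2 the lower bound on the distance is then replaced by the archimedean rate from Theorem~\ref{thm:existence}.

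\textbf{Item 1.} Fix $n$ with $a_d(n)\neq\kappa_d^{n^d}$ and set $\gamma=a_d(n)-\kappa_d^{n^d}$.
\begin{itemize}
\item $\gamma$ is a nonzero element of $\Q(\kappa_d)$, so $[\Q(\gamma):\Q]\le D$.
\item Since $a_d(n)$ is an integer, $h(a_d(n))=\log a_d(n)$. The bound $a_d(n)\le 2^{n^d}$ (every independent set is a subset of the $n^d$ vertices) gives $h(a_d(n))\le n^d\log2$.
\item Multiplicativity of the height under powers gives $h(\kappa_d^{n^d})=n^d h_0$.
\item The subadditivity $h(\gamma_1\pm\gamma_2)\le h(\gamma_1)+h(\gamma_2)+\log2$ then yields $h(\gamma)\le n^d(\log2+h_0)+\log2$.
\end{itemize}
Lemma~\ref{lem:liouville} gives $-\log|\gamma|\le D\big(n^d(\log 2+h_0)+\log 2\big)$. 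Dividing by $n^d$ gives exactly the stated bound.

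\textbf{Item 2, first inequality.} Both $\gamma_n$ and $\kappa_d$ are positive reals, so $\gamma_n=\kappa_d$ if and only if $a_d(n)=\gamma_n^{n^d}=\kappa_d^{n^d}$. Under our hypothesis $\gamma_n-\kappa_d$ is therefore a nonzero algebraic number.
\begin{itemize}
\item It lies in $\Q(\gamma_n,\kappa_d)$, whose degree is at most $\deg(\gamma_n)\,D$.
\item By Lemma~\ref{lem:betaheight}, $h(\gamma_n)\le\log 2$. Subadditivity then gives $h(\gamma_n-\kappa_d)\le \log2+h_0+\log2=C_1$.
\end{itemize}
Lemma~\ref{lem:liouville} yields $-\log|\gamma_n-\kappa_d|\le \deg(\gamma_n)\,D\,C_1$. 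Since $C_1>0$, dividing by $DC_1$ gives the first inequality.

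\textbf{Item 2, second inequality.} It remains to bound $|\gamma_n-\kappa_d|$ from above.
\begin{itemize}
\item Item 2 of Theorem~\ref{thm:existence} gives $|\log\gamma_n-\log\kappa_d|\le d\log2/n$.
\item The mean value theorem for $\exp$ gives $|\gamma_n-\kappa_d|\le\max(\gamma_n,\kappa_d)\,|\log\gamma_n-\log\kappa_d|$.
\item Lemma~\ref{lem:betaheight} gives $\gamma_n\le 2$, and Lemma~\ref{lem:naive_bound} gives $\kappa_d\le\sqrt3$. So the maximum is at most $2\le 2\sqrt3$, a generous constant that matches the statement.
\end{itemize}
Hence $|\gamma_n-\kappa_d|\le 2\sqrt3\,d\log2/n$, that is, $-\log|\gamma_n-\kappa_d|\ge\log n-\log(2\sqrt3\,d\log 2)$. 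Dividing by $DC_1>0$ completes the chain.

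\textbf{Expected obstacle.} The only delicate point is the bookkeeping of degrees and heights. One must use that $\gamma_n-\kappa_d$ sits in the compositum $\Q(\gamma_n,\kappa_d)$, of degree at most $\deg(\gamma_n)D$ rather than $D$. One must also check that Lemma~\ref{lem:liouville} applies with the field degree in place of the exact degree, which holds because the exact degree is at most the field degree and the height is nonnegative. The remaining steps are direct substitutions.
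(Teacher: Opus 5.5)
Your proposal is correct and follows the paper's proof essentially step for step. Both apply Lemma~\ref{lem:liouville} to $a_d(n)-\kappa_d^{n^d}\in\Q(\kappa_d)$ and to $\gamma_n-\kappa_d$ in the compositum, control the heights by subadditivity, $h(\kappa_d^{n^d})=n^dh_0$ and $h(\gamma_n)\le\log2$, and then invoke the rate from Theorem~\ref{thm:existence}. The only deviation is in the last step. Your mean value bound $|\gamma_n-\kappa_d|\le\max(\gamma_n,\kappa_d)\,|\log\gamma_n-\log\kappa_d|$ replaces the paper's inequality $|e^t-1|\le2|t|$ for $|t|\le1$, and so covers every $n$ directly, without the paper's restriction to $n$ with $d\log2/n\le1$.
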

\begin{proof}
For part 1, set $\alpha_n=a_d(n)-\kappa_d^{n^d}\in\Q(\kappa_d)$, nonzero by hypothesis, so $[\Q(\alpha_n):\Q]\le D$. By Lemma~\ref{lem:liouville} (using $[\Q(\alpha_n):\Q]\le D$ and $h(\alpha_n)\ge0$, so the Liouville bound only improves upon enlarging the degree factor to $D$):
\[
-\log|\alpha_n|\ \le\ D\cdot h(\alpha_n)\ \le\ D\big(h(a_d(n))+h(\kappa_d^{n^d})+\log2\big)\ =\ D\big(\log a_d(n)+n^dh_0+\log2\big),
\]
using $h(a_d(n))=\log a_d(n)$ (as $a_d(n)$ is a positive integer) and $h(\kappa_d^{n^d})=n^dh_0$. Since $a_d(n)\le2^{n^d}$ (Lemma~\ref{lem:betaheight}'s proof), $\log a_d(n)\le n^d\log2$, so $-\log|\alpha_n|\le D\big(n^d\log2+n^dh_0+\log2\big)=Dn^d(\log2+h_0)+D\log2$, and dividing by $n^d$ gives the stated bound.\\ % if the left side tended to $\infty$ this would contradict the bound (whose right side tends to the finite constant $D(\log2+h_0)$), so no algebraic $\kappa_d$ can satisfy the stated hypothesis, i.e.\ $\kappa_d$ must be transcendental.\\

For part 2, set $\delta_n=\gamma_n-\kappa_d\ne0$ (as $a_d(n)\ne\kappa_d^{n^d}$), lying in the compositum $\Q(\gamma_n,\kappa_d)$, of degree $[\Q(\delta_n):\Q]\le\deg(\gamma_n)\cdot D$. By Lemma~\ref{lem:liouville} (again enlarging the degree factor, valid as $h(\delta_n)\ge0$) and Lemma~\ref{lem:betaheight} ($h(\gamma_n)=\log\gamma_n\le\log2$), we can conclude that
\[
-\log|\delta_n|\ \le\ \deg(\gamma_n)\cdot D\cdot h(\delta_n)\ \le\ \deg(\gamma_n)\cdot D\cdot\big(h(\gamma_n)+h(\kappa_d)+\log2\big)\ \le\ \deg(\gamma_n)\cdot D\cdot(\log2+h_0+\log2),
\]
This is $\deg(\gamma_n)\cdot D\cdot C_1$, giving $\deg(\gamma_n)\ge\dfrac{-\log|\delta_n|}{DC_1}$, the first claim. Since from Theorem~\ref{thm:existence} we have that $\gamma_n\to\kappa_d$ and that $\delta_n\ne0$ for $a_d(n)\ne\kappa_d^{n^d}$, we can conclude that $-\log|\delta_n|\to\infty$ as $n\to\infty$ through $n$ such that $a_d(n)\ne\kappa_d^{n^d}$, forcing $\deg(\gamma_n)\to\infty$. Now, from $|\log\gamma_n-\log\kappa_d|\le \frac{d\log2}{n}$, for $n$ large enough that $\frac{d\log2}{n}\le1$, write $t=\log\gamma_n-\log\kappa_d$, then we have that $|t|\le \frac{d\log2}{n}\le1$, and since $|e^t-1|\le2|t|$ for $|t|\le1$ (as for $t\in[0,1]$, the function $\varphi(t)=e^t-1-2t$ satisfies $\varphi(0)=0$ and $\frac{d\varphi}{dt}=e^t-2$, which is negative on $[0,\log2)$ and positive on $(\log2,1]$, while $\varphi(1)=e-3<0$, since $\varphi$ decreases away from $0$ and only turns back upward while remaining below $\varphi(1)<0$, we have that $\varphi\le0$ throughout $[0,1]$, i.e.\ $e^t-1\le2t$ there, for $t\in[-1,0]$, convexity of $e^t$ gives $e^t\ge1+t$, so $1-e^t\le-t\le2|t|$, combining both ranges gives $|e^t-1|\le2|t|$ on $[-1,1]$), we get
\[
|\delta_n|\ =\ |\gamma_n-\kappa_d|\ =\ \kappa_d|e^t-1|\ \le\ \sqrt3\cdot2\cdot\frac{d\log2}n\ =\ \frac{2\sqrt3\,d\log2}n,
\]
using $\kappa_d\le\sqrt3$ (Lemma~\ref{lem:naive_bound}). So $-\log|\delta_n|\ge\log n-\log(2\sqrt3\,d\log2)$, which gives the desired result together with the fact that $|\log\gamma_n-\log\kappa_d|\le d\log2/n$, which follows from Theorem~\ref{thm:existence}.
\end{proof}

From Theorem~\ref{thm:master} we can conclude two criteria for the algebraicity of $\kappa_d$:

\begin{corollary}
Assume that $a_d(n)\ne\kappa_d^{n^d}$ for every $n$. If either
\begin{enumerate}
    \item $-\tfrac1{n^d}\log|a_d(n)-\kappa_d^{n^d}|\to\infty$ or
    \item $\{\deg(\gamma_n)\}$ is a bounded sequence,
\end{enumerate}
$\kappa_d$ must be transcendental. 
\end{corollary}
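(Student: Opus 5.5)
The proof is by contraposition. Assume $\kappa_d$ is algebraic, with $D=[\Q(\kappa_d):\Q]$ and $h_0=h(\kappa_d)$, and keep the standing hypothesis that $a_d(n)\ne\kappa_d^{n^d}$ for every $n$. Under that hypothesis both parts of Theorem~\ref{thm:master} hold for every $n\ge1$, and each criterion contradicts one of them.

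\emph{Criterion 1.} Part 1 of Theorem~\ref{thm:master} gives, for every $n$,
\[
-\frac1{n^d}\log\big|a_d(n)-\kappa_d^{n^d}\big|\ \le\ D(\log2+h_0)+\frac{D\log2}{n^d}\ \le\ D(2\log2+h_0).
\]
The right-hand side is a constant independent of $n$, so the sequence on the left is bounded above. It therefore cannot tend to $\infty$, which contradicts assumption 1.

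\emph{Criterion 2.} Part 2 of Theorem~\ref{thm:master} gives
\[
\deg(\gamma_n)\ \ge\ \frac{\log n-\log(2\sqrt3\,d\log2)}{D\,C_1}
\]
for every $n$ large enough that $\frac{d\log2}{n}\le1$. Here $C_1=h_0+2\log2>0$, and $D$, $C_1$, $d$ are fixed. The right-hand side tends to $\infty$ as $n\to\infty$, so $\deg(\gamma_n)\to\infty$. This contradicts the assumption that $\{\deg(\gamma_n)\}$ is bounded.

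In both cases the assumption that $\kappa_d$ is algebraic fails, so $\kappa_d$ is transcendental.

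All of the work is in Theorem~\ref{thm:master}, so no step is a real obstacle. The only points to check are that $C_1>0$, so the inequality direction in Criterion 2 is preserved, and that the threshold on $n$ in part 2 is harmless, since a lower bound valid only for large $n$ already rules out boundedness.
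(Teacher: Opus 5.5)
Your argument is correct and is exactly the deduction the paper intends. The paper states the corollary as an immediate consequence of Theorem~\ref{thm:master}: part 1 gives the uniform upper bound $D(2\log2+h_0)$ on $-\frac{1}{n^d}\log|a_d(n)-\kappa_d^{n^d}|$, and part 2 forces $\deg(\gamma_n)\to\infty$, with your checks that $C_1>0$ and that the threshold $\frac{d\log2}{n}\le1$ is harmless being the only details needed.
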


\begin{remark}
Both parts of Theorem~\ref{thm:master} are instances of a single mechanism, namely, bounding via the Liouville inequality of Lemma~\ref{lem:liouville}, how well a fixed algebraic number $\kappa_d$ can be approximated by the specific algebraic numbers $\gamma_n=a_d(n)^{1/n^d}$ that the combinatorics of $T_d(n)$ actually produces. Both derivations use only Lemma~\ref{lem:betaheight}'s uniform height bound on $\gamma_n$ and the elementary bound $a_d(n)\le2^{n^d}$. 
\end{remark}

%\subsection*{A local-field valuation lemma, and the limits of $\kappa_d^{p^{kd}}$}

We end this section with a study of the valuations of $\kappa_d$ with respect to the local field $\Q(\kappa_d)$ (under the assumption that $\kappa_d$ is algebraic). Throughout the rest of this subsection, $\kappa_d$ is algebraic, $K=\Q(\kappa_d)$, and $\mathfrak p\mid p$ is a prime of $K$ with normalized valuation $v_\mathfrak p$,  and $e_\mathfrak p=v_\mathfrak p(p)$.  %(Throughout this and the following subsections, $\log$ denotes the $p$-adic logarithm on the relevant local field, as opposed to the archimedean $\log$ of the preceding subsection, the type of the argument - an element of $L^\times$ versus a positive real number - makes the intended meaning clear from context.)

\begin{lemma}\label{lem:local}
Let $L/\Qp$ be a finite extension with normalized valuation $v$ and $e=v(p)$. If $u\in L^\times$ has $v(u-1)>0$ and $u$ is not a root of unity, there exist $C\in\Z$ and $m_0\ge0$ with $v(u^{p^m}-1)=C+me$ for every $m\ge m_0$.
\end{lemma}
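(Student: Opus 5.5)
Write $x=u-1$, so $v(x)>0$ and $x\neq0$; the plan is to follow the valuation of $u^{p^m}-1$ under repeated $p$-th powering. The key identity is
\[
(1+y)^p-1=py+\binom p2y^2+\cdots+\binom p{p-1}y^{p-1}+y^p,
\]
where every middle binomial coefficient has valuation exactly $e$. Put $y_m=u^{p^m}-1$. Then $y_{m+1}=(1+y_m)^p-1$. No $y_m$ vanishes, since otherwise $u$ would be a $p^m$-th root of unity.

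\textbf{Step 1.} Compare the two competing terms. If $v(y)>\frac{e}{p-1}$, then $v(py)=e+v(y)$ is strictly smaller than $v(y^p)=p\,v(y)$. It is also strictly smaller than each middle term, whose valuation is at least $e+2v(y)$. Hence in this range
\[
v\big((1+y)^p-1\big)=v(y)+e,
\]
and the new valuation again exceeds $\frac{e}{p-1}$. So once some $y_{m_0}$ satisfies $v(y_{m_0})>\frac{e}{p-1}$, induction gives $v(y_m)=v(y_{m_0})+(m-m_0)e$ for all $m\ge m_0$. This is the claimed form with $C=v(y_{m_0})-m_0e\in\Z$.

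\textbf{Step 2.} Show that the threshold is eventually crossed; this is the main obstacle. Suppose $v(y)\le\frac{e}{p-1}$. Then $v(y^p)=p\,v(y)\le v(y)+e$, and every middle term has valuation at least $e+v(y)$. So $v(y_{m+1})\ge\min\big(p\,v(y_m),\,e+v(y_m)\big)$, at least while the leading terms do not cancel. In that case the valuation grows by at least $\min\big((p-1)v(y_m),e\big)\ge(p-1)v(y_m)\ge p-1\ge1$ at each step. The valuations are positive integers, so this growth pushes $v(y_m)$ past $\frac{e}{p-1}$ after finitely many steps.

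In the boundary case $v(y)=\frac{e}{p-1}$ the terms $py$ and $y^p$ may cancel, but this can only raise the valuation. So in every case $v(y_{m+1})>v(y_m)$ as long as we remain at or below the threshold. A strictly increasing integer sequence cannot stay below the finite bound $\frac{e}{p-1}$ forever, so the threshold is eventually exceeded strictly. The hypothesis that $u$ is not a root of unity is what keeps every $y_m$ nonzero, so all valuations are finite throughout.

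\textbf{Alternative (cleaner) route.} One can instead use the $p$-adic logarithm. Since $u$ is a principal unit that is not a root of unity, $\log_p u\ne0$. For $m$ large, $u^{p^m}$ lies in the disc $v(z-1)>\frac{e}{p-1}$, where $\log_p$ is an isometry. Hence $v(u^{p^m}-1)=v(p^m\log_p u)=me+v(\log_p u)$, which again gives the claimed form. I would present the elementary Step 1–2 argument, but either route works.
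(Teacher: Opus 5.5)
Your proof is correct and has the same skeleton as the paper's. First, $v(u^{p^m}-1)$ strictly increases until it exceeds $e/(p-1)$; your Step 2 is exactly the paper's estimate $v(u^p-1)\ge t_0+\min(e,(p-1)t_0)$. Second, beyond that threshold the valuation grows by exactly $e$ per step.

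The difference is in that second stage. The paper only records the inequality $v(w^p-1)\ge t+e$ on the disc $v(w-1)>e/(p-1)$, to show the disc is stable under $p$-th powers. It then gets the exact equality from the $p$-adic logarithm: $\log_p$ is an isometry there, and $\log_p(w^{p^j})=p^j\log_p w$. You instead note that for $v(y)>e/(p-1)$ the term $py$ strictly dominates both $y^p$ and the middle binomial terms. The ultrametric equality then gives $v\bigl((1+y)^p-1\bigr)=v(y)+e$ directly. This makes the argument self-contained, with no appeal to convergence or isometry properties of $\log_p$.

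Your alternative route is essentially the paper's argument, slightly streamlined. You apply $\log_p(u^{p^m})=p^m\log_p u$ to $u$ itself rather than to $w=u^{p^s}$, which also identifies the constant intrinsically as $C=v(\log_p u)$.

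One cosmetic point: in Step 2 the hedge ``at least while the leading terms do not cancel'' is unnecessary. The bound $v(y_{m+1})\ge\min\bigl(p\,v(y_m),\,e+v(y_m)\bigr)$ holds unconditionally by the ultrametric inequality, and cancellation can only raise the valuation.
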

\begin{proof}
 Let $t_0=v(u-1)>0$ and $x=u-1$. Then we can write  $u^p-1=(1+x)^p-1=\sum_{j=1}^p\binom pjx^j$. For $1\le j\le p-1$ we have that  $\nu_p\big(\binom pj\big)=1$ exactly, so $v\big(\binom pjx^j\big)=e+jt_0\ge e+t_0$, and $v(x^p)=pt_0$. Thus, since $e\ge1$ and $t_0>0$, we can conclude that  $v(u^p-1)\ge\min(e+t_0,pt_0)=t_0+\min(e,(p-1)t_0)>t_0$. Since $u$ is not a root of unity, neither is $u^{p^j}$ for any $j$, so this computation reapplies at every stage: $v(u^{p^j}-1)$ is strictly increasing in $j$, hence $\to\infty$. Now, choose $s$ such that $v(u^{p^s}-1)>e/(p-1)$. On the range $\{x\in L:v(x-1)>e/(p-1)\}$, the $p$-adic logarithm converges with $v(\log_p x)=v(x-1)$ (see e.g.\ \cite{Serre,Neukirch}), and this range is closed under $p$-th powers: if $v(w-1)=t>e/(p-1)$, then $(p-1)t>e$, so $v(w^p-1)\ge t+\min(e,(p-1)t)=t+e>e/(p-1)$. Setting $w=u^{p^s}$ we have that $\log_p(w^{p^j})=p^j\log_p w$ for $j\ge0$, so $v(\log_p(w^{p^j}))=v(\log_p w)+je$. Since $w^{p^j}$ stays in the range for every $j\ge0$, we can conclude that $v(w^{p^j}-1)=v(\log_p(w^{p^j}))=v(w-1)+je$. Therefore, since $w^{p^j}=u^{p^{s+j}}$, we can set $m=s+j$ and conclude that $v(u^{p^m}-1)=v(w-1)+(m-s)e=:C+me$ for $m\ge m_0=s$.
\end{proof}

\begin{remark}
    We can view Lemma~\ref{lem:local} as a local-field valuation estimate that tracks how fast $u^{p^m}$ converges to $1$ for some $u\equiv1\pmod{\mathfrak p}$ in a finite extension $L/\Qp$.
\end{remark}

\begin{proposition}\label{prop:teich}
If $v_\mathfrak p(\kappa_d)=0$ and $\mathfrak p$ has residue degree $1$, write $\kappa_d=\omega(\kappa_d)\langle\kappa_d\rangle$ (Teichm\"uller decomposition). Then $\kappa_d^{p^{km}}\to\omega(\kappa_d)$ as $k\to\infty$, for every fixed $m\ge1$ (in particular for $m=d$).
\end{proposition}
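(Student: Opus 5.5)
We work in the completion $L=K_\mathfrak p$. Since $\mathfrak p$ has residue degree $1$, the residue field is $\F_p$, so the Teichm\"uller lift $\omega(\kappa_d)$ is a $(p-1)$-th root of unity in $L$. The principal unit part $u=\langle\kappa_d\rangle=\kappa_d/\omega(\kappa_d)$ then satisfies $v_\mathfrak p(u-1)>0$. Here we use the hypothesis $v_\mathfrak p(\kappa_d)=0$ to know that $\kappa_d$ is a $\mathfrak p$-adic unit, so that the decomposition makes sense.

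The plan is to split $\kappa_d^{p^{km}}=\omega(\kappa_d)^{p^{km}}\langle\kappa_d\rangle^{p^{km}}$ and treat the two factors separately.

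For the root of unity factor, since $\omega(\kappa_d)^{p-1}=1$ and $p\equiv1\pmod{p-1}$, we get $p^{km}\equiv1\pmod{p-1}$. Hence $\omega(\kappa_d)^{p^{km}}=\omega(\kappa_d)$ exactly, for every $k$ and $m$.

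For the principal unit factor, we must show $u^{p^{N}}\to1$ as $N\to\infty$, and then restrict to the subsequence $N=km$. Write $t_0=v_\mathfrak p(u-1)>0$ and $u=1+x$. The binomial estimate from the proof of Lemma~\ref{lem:local} gives
\[
v_\mathfrak p(u^p-1)\ \ge\ t_0+\min\big(e_\mathfrak p,(p-1)t_0\big).
\]
Because valuations on $L$ are discrete, each increment is at least a fixed positive amount $\min(e_\mathfrak p,(p-1)t_0)\ge\min(1,(p-1)t_0)>0$. Applying this iteratively shows $v_\mathfrak p(u^{p^N}-1)\to\infty$.

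This argument does not need $u$ to be a non-root of unity. If $u$ is a root of unity, it is a principal unit of finite order, hence of $p$-power order, so $u^{p^N}=1$ eventually. If $u$ is not a root of unity, Lemma~\ref{lem:local} gives the linear growth $C+Ne_\mathfrak p$ directly. Either case yields convergence, and the direct iterated estimate handles both at once.

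Combining the two factors, $\kappa_d^{p^{km}}=\omega(\kappa_d)\,u^{p^{km}}\to\omega(\kappa_d)$ in $L$. The choice $m=d$ is a special case.

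The only subtle point is the possibility that the increments shrink. I would handle this by noting that $v_\mathfrak p(u^{p^j}-1)\ge t_0$ for all $j$, so the increment at each step is at least $\min(e_\mathfrak p,(p-1)t_0)>0$, uniformly in $j$.
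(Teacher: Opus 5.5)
Your proof is correct and follows the paper's argument: you split $\kappa_d=\omega(\kappa_d)\langle\kappa_d\rangle$, get $\omega(\kappa_d)^{p^{km}}=\omega(\kappa_d)$ from $p^{km}\equiv1\pmod{p-1}$, and show $\langle\kappa_d\rangle^{p^{km}}\to1$. The one difference is in that last limit. You obtain it uniformly from the iterated binomial bound $v_\mathfrak p(u^{p^{j+1}}-1)\ge v_\mathfrak p(u^{p^j}-1)+\min(e_\mathfrak p,(p-1)t_0)$. The paper instead splits into cases: either $\langle\kappa_d\rangle$ is a root of unity, necessarily of $p$-power order, or it is not, and then Lemma~\ref{lem:local} applies. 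Your route avoids that case split and the $p$-adic logarithm.
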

\begin{proof}
$p^{km}\equiv1\pmod{p-1}$ trivially, so $\omega(\kappa_d)^{p^{km}}=\omega(\kappa_d)$ exactly for every $k$. For $\langle\kappa_d\rangle$: if it equals $1$, the claim is trivial. If it is a root of unity $\ne1$, it must have $p$-power order (a root of unity of order coprime to $p$ reduces injectively into the residue field, so cannot lie in $1+\mathfrak p$ unless it is $1$), so $\langle\kappa_d\rangle^{p^{km}}=1$ once $km \geq s$. If $\langle\kappa_d\rangle$ is not a root of unity, Lemma~\ref{lem:local} gives $v_\mathfrak p\big(\langle\kappa_d\rangle^{p^{km}}-1\big)\to\infty$ as $k\to\infty$. Either way $\langle\kappa_d\rangle^{p^{km}}\to1$, so $\kappa_d^{p^{km}}=\omega(\kappa_d)\langle\kappa_d\rangle^{p^{km}}\to\omega(\kappa_d)$.
\end{proof}

\begin{lemma}\label{lem:dichotomy}
Fix $\kappa_d$ algebraic and let $\mathfrak p\mid p$. Then exactly one of the following holds:
\begin{enumerate}
\item $v_\mathfrak p(\kappa_d-1)>0$, and then $v_\mathfrak p(\kappa_d^{p^{kd}}-1)\to\infty$ as $k\to\infty$ (rate given by Lemma~\ref{lem:local}),
\item $v_\mathfrak p(\kappa_d)<0$, and then $v_\mathfrak p(\kappa_d^{p^{kd}}-1)=p^{kd}v_\mathfrak p(\kappa_d)\to-\infty$,
\item $v_\mathfrak p(\kappa_d)\ge0$ and $\kappa_d\not\equiv1\pmod{\mathfrak p}$, and then $v_\mathfrak p(\kappa_d^{p^{kd}}-1)=0$ for every $k\ge0$.
\end{enumerate}
\end{lemma}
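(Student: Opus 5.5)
The plan is to split according to the position of $\kappa_d$ relative to the valuation ring $\mathcal O_{\mathfrak p}$ and its maximal ideal. The three conditions $v_\mathfrak p(\kappa_d-1)>0$, $v_\mathfrak p(\kappa_d)<0$, and ``$v_\mathfrak p(\kappa_d)\ge0$ with $\kappa_d\not\equiv1\pmod{\mathfrak p}$'' are mutually exclusive and exhaustive. If $v_\mathfrak p(\kappa_d)<0$, then $\kappa_d-1$ also has negative valuation, so the first case fails. If $v_\mathfrak p(\kappa_d)\ge0$, then either $\kappa_d\equiv1$ or not. Since $\kappa_d\neq0$, $v_\mathfrak p(\kappa_d)$ is a finite integer, so this trichotomy covers everything. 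It then remains to verify the stated behaviour in each case, working in the completion $L=K_\mathfrak p$.

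\textbf{Case 2} is immediate. By the ultrametric equality (strict inequality of valuations),
\[
v_\mathfrak p\big(\kappa_d^{p^{kd}}-1\big)=\min\big(p^{kd}v_\mathfrak p(\kappa_d),0\big)=p^{kd}v_\mathfrak p(\kappa_d)\to-\infty .
\]

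\textbf{Case 3.} Here $\kappa_d$ is a $\mathfrak p$-adic integer. If it is a unit, its residue $\bar\kappa\in\F_q^\times$ satisfies $\bar\kappa\neq1$. The map $x\mapsto x^{p}$ is the Frobenius automorphism of $\F_q$, hence injective, so $\bar\kappa^{p^{kd}}\neq1$ and $v_\mathfrak p(\kappa_d^{p^{kd}}-1)=0$. If instead $v_\mathfrak p(\kappa_d)>0$, then $\kappa_d^{p^{kd}}\equiv0$, so $\kappa_d^{p^{kd}}-1$ is a unit and the valuation is again $0$. This also covers $k=0$.

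\textbf{Case 1.} Apply Lemma~\ref{lem:local} with $u=\kappa_d$ in $L$. If $\kappa_d$ is not a root of unity, then $v(u^{p^m}-1)=C+me\to\infty$; take $m=kd$, which tends to infinity with $k$. We should note that $\kappa_d$ cannot be a root of unity, since $\kappa_d\ge\sqrt2>1$ as a real number by Lemma~\ref{lem:naive_bound}. For robustness, one can also remark that if $u$ were a root of unity in $1+\mathfrak p$, it would have $p$-power order (as in the proof of Proposition~\ref{prop:teich}). Then $u^{p^{kd}}=1$ for large $k$, and the valuation would be $+\infty$.

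The only mildly delicate point is the claim in Case~3 that the residue never becomes $1$ under $p$-power maps, which is exactly the injectivity of Frobenius. Everything else is a direct application of the ultrametric inequality and Lemma~\ref{lem:local}.
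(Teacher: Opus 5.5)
Your proposal is correct and follows the paper's proof essentially step for step: the same trichotomy, the ultrametric equality in Case~2, a residue-field argument in Case~3, and Lemma~\ref{lem:local} in Case~1. In Case~3 you use injectivity of Frobenius where the paper uses that the order of $\bar\kappa_d$ is coprime to $p$, which is the same fact. In Case~1 you also check explicitly that $\kappa_d$ is not a root of unity (since $\kappa_d>1$ as a real number), a hypothesis of Lemma~\ref{lem:local} that the paper's proof uses without comment.
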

\begin{proof}
These three cases exhaust all possibilities for $v_\mathfrak p(\kappa_d)$ and, when $\ge0$, whether $\kappa_d\equiv1\pmod{\mathfrak p}$ or not. For the second case,  $v_\mathfrak p(\kappa_d^{p^{kd}})=p^{kd}v_\mathfrak p(\kappa_d)<0=v_\mathfrak p(1)$, so the ultrametric equality gives $v_\mathfrak p(\kappa_d^{p^{kd}}-1)=v_\mathfrak p(\kappa_d^{p^{kd}})$. For the third case, $\kappa_d\in\mathcal O_{K_\mathfrak p}$, reducing to $\bar\kappa_d\in F=\mathcal O/\mathfrak p$. If $\bar\kappa_d=0$, then $\bar\kappa_d^{p^{kd}}=0\ne1$ for every $k$. If $\bar\kappa_d\ne0$, its order $\ell\ge2$ (as $\bar\kappa_d\ne1$) divides $|F^\times|=p^{f_\mathfrak p}-1$, coprime to $p$, so $\gcd(\ell,p^{kd})=1$, and therefore $\bar\kappa_d^{p^{kd}}=1$ would force $\ell\mid p^{kd}$, hence $\ell=1$, contradiction. Either way, $\bar\kappa_d^{p^{kd}}\ne1$, so $v_\mathfrak p(\kappa_d^{p^{kd}}-1)\le0$, since $\kappa_d^{p^{kd}}$ and $1$ are both $\mathfrak p$-integral, $v_\mathfrak p(\kappa_d^{p^{kd}}-1)\ge0$ too, giving exactly $0$. Finally, the first case follows from Lemma~\ref{lem:local}.
\end{proof}

\begin{proposition}\label{prop:ANppinned}
For every $\mathfrak p\mid p$ of $K=\Q(\kappa_d)$, if $p\in\Tame_d$, then $v_\mathfrak p\big(\kappa_d^{(p)}-1\big)=e_\mathfrak p\,r_p$, where $r_p=\nu_p(a_d(p)-1)$.
\end{proposition}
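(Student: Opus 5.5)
The plan is to reduce the claim to Proposition~\ref{prop:kappa_p_exists} and then change normalization from $\nu_p$ to $v_\mathfrak p$.

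Since $p\in\Tame_d$, Proposition~\ref{prop:kappa_p_exists} gives $\kappa_d^{(p)}\in 1+p\Zp$ with $\nu_p(\kappa_d^{(p)}-1)=r_p$. Here $r_p$ is a finite positive integer: $a_d(p)\neq 1$ because $a_d(p)\ge 2$. So we may write $\kappa_d^{(p)}-1=p^{r_p}u$ with $u\in\Zp^\times$.

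Next we place this element in the completion $K_\mathfrak p$. Since $\kappa_d^{(p)}$ lies in $\Qp\subseteq K_\mathfrak p$, the expression $v_\mathfrak p(\kappa_d^{(p)}-1)$ makes sense. This holds even though $\kappa_d^{(p)}$ is not claimed to lie in $K$ itself, because $v_\mathfrak p$ extends to $K_\mathfrak p$. By the normalization fixed in the Notation section, $v_\mathfrak p|_{\Qp}=e_\mathfrak p\nu_p$. Hence
\[
v_\mathfrak p(\kappa_d^{(p)}-1)=e_\mathfrak p\,\nu_p(p^{r_p})+e_\mathfrak p\,\nu_p(u)=e_\mathfrak p r_p+0,
\]
since a $p$-adic unit remains a unit in the extension ring $\mathcal O_{K_\mathfrak p}$.

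The only point needing care is a bookkeeping one. The valuation must be the normalized one with $v_\mathfrak p(p)=e_\mathfrak p$, as stated, rather than one normalized by $v(p)=1$. We must also observe that the statement does not assume $\kappa_d^{(p)}$ equals any embedding of $\kappa_d$; it concerns $\kappa_d^{(p)}$ viewed inside $K_\mathfrak p\supseteq\Qp$. With these conventions the argument is immediate, and the substance lies entirely in the tameness input from Proposition~\ref{prop:kappa_p_exists}.
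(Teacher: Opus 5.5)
Your proposal is correct and follows the paper's proof. Both get $\nu_p(\kappa_d^{(p)}-1)=r_p$ from Proposition~\ref{prop:kappa_p_exists} using tameness, and then transfer this to $K_\mathfrak p$ via the normalization $v_\mathfrak p|_{\Qp}=e_\mathfrak p\nu_p$.
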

\begin{proof}
By Proposition~\ref{prop:kappa_p_exists}, $\nu_p(\kappa_d^{(p)}-1)=r_p$ in $\Zp$, extending to $K_\mathfrak p$ via $v_\mathfrak p(x)=e_\mathfrak p\nu_p(x)$ for $x\in\Qp^\times\subset K_\mathfrak p^\times$ gives the claim.
\end{proof}

\begin{theorem}\label{thm:comparison}
Assume $\kappa_d$ algebraic, $p\in\Tame_d$, and fix $\mathfrak p\mid p$.
\begin{enumerate}
\item If $v_\mathfrak p(\kappa_d-1)>0$ then for all large $k$ we have that $v_\mathfrak p\big(\kappa_d^{(p)}-\kappa_d^{p^{kd}}\big)=e_\mathfrak p\,r_p$.
\item If $v_\mathfrak p(\kappa_d-1)\le0$ then for every $k$ we have that $v_\mathfrak p\big(\kappa_d^{(p)}-\kappa_d^{p^{kd}}\big)=v_\mathfrak p\big(\kappa_d^{p^{kd}}-1\big)\le0$.
\end{enumerate}
In particular, $\kappa_d^{(p)}\ne\kappa_d^{p^{kd}}$ for all large $k$ and  $\kappa_d^{(p)}$ is never a $\mathfrak p$-adic limit point of the sequence $\big\{\kappa_d^{p^{kd}}\big\}_{k=1}^\infty$.
\end{theorem}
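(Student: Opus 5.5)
The idea is to write $\kappa_d^{(p)}-\kappa_d^{p^{kd}}=(\kappa_d^{(p)}-1)-(\kappa_d^{p^{kd}}-1)$ inside the completion $K_\mathfrak p$. Then Proposition~\ref{prop:ANppinned} fixes the valuation of the first difference. The trichotomy of Lemma~\ref{lem:dichotomy} controls the second. Each case then follows from the strict ultrametric inequality.

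\emph{Part 1.} Suppose $v_\mathfrak p(\kappa_d-1)>0$. By the first case of Lemma~\ref{lem:dichotomy}, $v_\mathfrak p(\kappa_d^{p^{kd}}-1)\to\infty$. Strictly speaking, Lemma~\ref{lem:local} assumes $\kappa_d$ is not a root of unity. Since $\kappa_d>\sqrt2$ is a real number different from $\pm1$, this holds. (Alternatively, a root of unity in $1+\mathfrak p$ has $p$-power order, so its $p^{kd}$-th powers are eventually $1$, and the conclusion holds anyway.) Hence for all large $k$ we have $v_\mathfrak p(\kappa_d^{p^{kd}}-1)>e_\mathfrak p r_p$. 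By Proposition~\ref{prop:ANppinned}, $v_\mathfrak p(\kappa_d^{(p)}-1)=e_\mathfrak p r_p$. The ultrametric equality then gives $v_\mathfrak p(\kappa_d^{(p)}-\kappa_d^{p^{kd}})=e_\mathfrak p r_p$ for all large $k$.

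\emph{Part 2.} Suppose $v_\mathfrak p(\kappa_d-1)\le0$. Then we are in case 2 or case 3 of Lemma~\ref{lem:dichotomy}, so $v_\mathfrak p(\kappa_d^{p^{kd}}-1)\le0$ for every $k$. On the other hand, $v_\mathfrak p(\kappa_d^{(p)}-1)=e_\mathfrak p r_p\ge e_\mathfrak p\ge1>0$, because $r_p\ge1$ by Corollary~\ref{cor:p_mod_1}. The two valuations differ strictly, so $v_\mathfrak p(\kappa_d^{(p)}-\kappa_d^{p^{kd}})=v_\mathfrak p(\kappa_d^{p^{kd}}-1)\le0$.

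\emph{Consequences.} In either case the valuation $v_\mathfrak p(\kappa_d^{(p)}-\kappa_d^{p^{kd}})$ is finite and bounded above uniformly for large $k$: by $e_\mathfrak p r_p$ in case 1 and by $0$ in case 2. So the difference is nonzero for large $k$, and it does not tend to $0$ in $K_\mathfrak p$ along any subsequence. Therefore $\kappa_d^{(p)}$ is not a $\mathfrak p$-adic limit point of $\{\kappa_d^{p^{kd}}\}$.

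The only delicate point is checking that $\kappa_d$ is not a root of unity when Lemma~\ref{lem:local} is invoked. The rest is bookkeeping: the embedding $K\hookrightarrow K_\mathfrak p$, and viewing $\kappa_d^{(p)}\in\Zp\subset K_\mathfrak p$, which is how Proposition~\ref{prop:ANppinned} is phrased.
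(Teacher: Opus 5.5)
Your proposal is correct and follows the paper's proof. The paper also splits the difference as $x-y$, with $x=\kappa_d^{(p)}-1$ pinned at valuation $e_\mathfrak p r_p$ by Proposition~\ref{prop:ANppinned} and $y=\kappa_d^{p^{kd}}-1$ controlled by Lemma~\ref{lem:dichotomy}, and then applies the strict ultrametric inequality in each case. Your extra checks fill in details the paper leaves implicit: that $\kappa_d$ is not a root of unity (so Lemma~\ref{lem:local} applies), that $r_p\ge1$, and the explicit derivation of the ``in particular'' clause.
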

\begin{proof}
Write $x=\kappa_d^{(p)}-1$ (of fixed valuation $e_\mathfrak p r_p$, as in Proposition~\ref{prop:ANppinned}) and $y=\kappa_d^{p^{kd}}-1$. In the first case we have that $v_\mathfrak p(y)\to\infty$ (Lemma~\ref{lem:dichotomy}), so eventually exceeds $v_\mathfrak p(x)=e_\mathfrak pr_p$, giving $v_\mathfrak p(x-y)=v_\mathfrak p(x)$. In the second case we have that $v_\mathfrak p(y)\le0<e_\mathfrak p r_p=v_\mathfrak p(x)$ for every $k$ (Lemma~\ref{lem:dichotomy}, cases 2 or 3), giving us $v_\mathfrak p(x-y)=v_\mathfrak p(y)$.
\end{proof}

\section{The Equation $a_d(n)=\kappa_d^{n^d}$ and the set  $E_d$}\label{sec:Ed}

As we saw in Theorem~\ref{thm:master}, the inequality $a_d(n)\ne\kappa_d^{n^d}$ plays a crucial role in the analysis of the algebraicity of $\kappa_d$. Therefore, in this section we study for which values of $n$ we can have that $a_d(n)= \kappa_d^{n^d}$ and how many such values there can be. Throughout this section we assume that $\kappa_d$ is algebraic, writing $D=[K:\Q]$ where $K=\Q(\kappa_d)$, as before.

\begin{definition}\label{def:Ed}
$E_d=\{n\ge1 : a_d(n)=\kappa_d^{n^d}\}=\{n\ge1:\gamma_n=\kappa_d\}$, where $\gamma_n=a_d(n)^{1/n^d}$ as in Lemma~\ref{lem:betaheight}.
\end{definition}

\begin{remark}\label{lem:1notin}
Since from Lemma~\ref{lem:naive_bound} we have that $\kappa_d \in [\sqrt{2}, \sqrt{3}]$, then $\kappa_d$ is not an integer and so $1\notin E_d$ (noting that $a_d(1)=2$).
\end{remark}

\noindent  The following proposition tells us that $E_d$ must have a strict arithmetic structure:

\begin{proposition}\label{thm:coprimality}
If $\kappa_d$ is algebraic and $E_d\ne\emptyset$, then $\gcd(E_d)>1$, i.e.\ some fixed prime divides every element of $E_d$.
\end{proposition}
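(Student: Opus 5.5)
The plan is to argue by contradiction with a Bezout-type exponent argument: if the indices in $E_d$ had no common prime factor, $\kappa_d$ would have to be rational, and hence an integer. Suppose $E_d\neq\emptyset$ but $\gcd(E_d)=1$.

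First I would reduce to finitely many indices. The gcd of any set of positive integers is attained on a finite subset, so there are $n_1,\dots,n_r\in E_d$ with $\gcd(n_1,\dots,n_r)=1$. If $r=1$, this says $n_1=1\in E_d$, which Remark~\ref{lem:1notin} rules out. So we may assume $r\ge2$, although the argument below does not actually need this.

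Next I would pass to $d$-th powers. A prime dividing every $n_i^d$ divides every $n_i$, so $\gcd(n_1^d,\dots,n_r^d)=1$. Bezout then gives integers $u_1,\dots,u_r$, possibly negative, with $\sum_i u_i n_i^d=1$. Since $\kappa_d>0$, negative exponents are harmless, and we get
\[
\kappa_d=\kappa_d^{\sum_i u_i n_i^d}=\prod_{i=1}^r\big(\kappa_d^{n_i^d}\big)^{u_i}=\prod_{i=1}^r a_d(n_i)^{u_i}\in\Q,
\]
using $a_d(n_i)=\kappa_d^{n_i^d}$ for $n_i\in E_d$.

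Finally I would show that $\kappa_d$ is an integer. It is a root of the monic integer polynomial $X^{n_1^d}-a_d(n_1)$, so it is an algebraic integer. A rational algebraic integer lies in $\Z$, so $\kappa_d\in\Z$. This contradicts $\sqrt2\le\kappa_d\le\sqrt3$ from Lemma~\ref{lem:naive_bound}, since that interval contains no integer.

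There is no real obstacle here. The only points needing care are that Bezout must be applied to the $d$-th powers $n_i^d$ rather than to the $n_i$ themselves, and that negative exponents are legitimate because $\kappa_d\neq0$. The algebraicity hypothesis is not even used, since membership in $E_d$ already forces $\kappa_d$ to be algebraic.
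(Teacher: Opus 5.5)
Your proof is correct and follows the paper's argument essentially verbatim. You reduce to a finite subset with gcd $1$, apply B\'ezout to the $d$-th powers $n_i^d$ to get $\kappa_d\in\Q$, and then use that $\kappa_d$ is an algebraic integer (a root of $X^{n_1^d}-a_d(n_1)$) lying in $[\sqrt2,\sqrt3]$. Your side remarks are also accurate: the case $r=1$ needs no separate treatment, and algebraicity of $\kappa_d$ is automatic once $E_d\neq\emptyset$.
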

\begin{proof}
Suppose $\gcd(E_d)=1$. Since the sequence of gcds of the initial elements of any enumeration of $E_d$ is a non-increasing sequence of positive integers, it stabilizes, so there is a finite subset $\{n_1,\dots,n_r\}\subseteq E_d$ with $\gcd(n_1,\dots,n_r)=1$, hence also $\gcd(n_1^d,\dots,n_r^d)=\gcd(n_1,\dots,n_r)^d=1$. By Bézout, there are integers $u_1,\dots,u_r$ with $\sum_iu_in_i^d=1$. Since $n_i\in E_d$, we have that $\kappa_d^{n_i^d}=a_d(n_i)\in\Z_{\ge2}$ for each $i$, and  so
\[
\kappa_d\ =\ \kappa_d^{\sum_iu_in_i^d}\ =\ \prod_i\big(\kappa_d^{n_i^d}\big)^{u_i}\ =\ \prod_i a_d(n_i)^{u_i}\ \in\ \Q,
\]
a positive rational number (as each $a_d(n_i)$ is a positive integer). But $\kappa_d$ is a root of the monic integer polynomial $X^{n_1^d}-a_d(n_1)$, hence an algebraic integer, a rational algebraic integer is an ordinary integer, so $\kappa_d\in\Z$. This contradicts $\kappa_d\in[\sqrt2,\sqrt3]$ (Lemma~\ref{lem:naive_bound}), an interval containing no integers.
\end{proof}

From Theorem~\ref{thm:coprimality} we can conclude that $E_d$ has a very limited arithmetic structure

\begin{corollary}\label{cor:pairwise}
If $\kappa_d$ is algebraic and $E_d \neq \emptyset$, then any two elements of $E_d$ share a common factor $>1$. In particular, if $p^k \in E_d$ for some prime $p$ and some $k \geq 1$ then $q^l \notin E_d$ for any prime $q \neq p$ and any $l \geq 1$. 
\end{corollary}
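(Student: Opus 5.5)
The plan is to derive this directly from Proposition~\ref{thm:coprimality}, applied to suitable finite subsets rather than to all of $E_d$. The proof of Proposition~\ref{thm:coprimality} only uses that some finite collection $n_1,\dots,n_r\in E_d$ has $\gcd(n_1,\dots,n_r)=1$. So I would first restate it in local form: if $\kappa_d$ is algebraic, then no finite subset of $E_d$ has gcd $1$.

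For the first claim, take $m,n\in E_d$ and suppose $\gcd(m,n)=1$. Then $\gcd(m^d,n^d)=1$, and Bézout gives integers $u,v$ with $um^d+vn^d=1$. This yields
\[
\kappa_d=\big(\kappa_d^{m^d}\big)^u\big(\kappa_d^{n^d}\big)^v=a_d(m)^u a_d(n)^v\in\Q_{>0}.
\]
Since $\kappa_d$ is a root of the monic polynomial $X^{m^d}-a_d(m)$, it is an algebraic integer, and therefore $\kappa_d\in\Z$. This contradicts $\kappa_d\in[\sqrt2,\sqrt3]$ from Lemma~\ref{lem:naive_bound}, as in Remark~\ref{lem:1notin}. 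Hence every pair of elements of $E_d$ shares a factor greater than $1$. This pairwise statement is genuinely stronger than $\gcd(E_d)>1$, so I would run the Bézout argument on the pair rather than quote Proposition~\ref{thm:coprimality} verbatim. The case $m=n$ is trivial, since $1\notin E_d$.

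For the second claim, suppose $p^k\in E_d$ and $q^l\in E_d$ with $q\neq p$ prime and $k,l\ge1$. Then $\gcd(p^k,q^l)=1$, which contradicts the pairwise statement.

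I expect no real obstacle. The only point needing care is to apply the Bézout computation to the pair $\{m,n\}$ itself and not to all of $E_d$, because two elements of $E_d$ could a priori be coprime even when $\gcd(E_d)>1$ fails for other reasons.
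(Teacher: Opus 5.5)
Your proof is correct and uses the same mechanism as the paper: the Bézout/integrality computation you run on the pair $\{m,n\}$ is the proof of Proposition~\ref{thm:coprimality} restricted to a two-element subset. The paper does not rerun it. It reads the corollary directly off that proposition, since a prime dividing $\gcd(E_d)$ divides every element, and in particular every pair.

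Your justification for avoiding the proposition has the implication backwards. $\gcd(E_d)>1$ means a single prime divides every element of $E_d$, so it immediately forces any two elements to share a factor. The pairwise statement is the \emph{weaker} one, because pairwise non-coprimality does not give a common prime (e.g.\ $\{6,10,15\}$). So quoting Proposition~\ref{thm:coprimality} verbatim would have sufficed, and two coprime elements of $E_d$ alongside $\gcd(E_d)>1$ cannot occur.
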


% \begin{corollary}\label{cor:oneprime}
% If $\kappa_d$ is algebraic and $E_d\ne\emptyset$, then $E_d$ contains at most one prime number.
% \end{corollary}
% \begin{proof}
% By , fix a prime $p_0\mid\gcd(E_d)$, so $p_0\mid n$ for every $n\in E_d$. If a prime $q\in E_d$, then $p_0\mid q$ forces $p_0=q$ (as $q$ is prime and $p_0>1$). So the only prime that can possibly lie in $E_d$ is $p_0$ itself.
% \end{proof}

We now show that, moreover, only finitely many powers of a single tame prime can lie in $E_d$, giving a genuine finiteness statement for prime powers:

\begin{proposition}\label{thm:finiteexp}
Suppose $\kappa_d$ is algebraic, $p$ is a tame prime for $d$, and set $m=\nu_p(a_d(p)-1)$. Then we have that:
\begin{enumerate}
\item If $p$ is odd then $\{k\ge1 : p^k\in E_d\}$ is finite and of size at most $m$.
\item If $p=2$ then $\{k\ge1 : 2^k\in E_d\}$ has at most one element.
\end{enumerate}
\end{proposition}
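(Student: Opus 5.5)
The plan is to show directly that $\{k\ge1 : p^k\in E_d\}$ has at most one element, for every tame prime $p$ (odd or $p=2$). Since $m\ge1$, this gives both items at once. The only inputs are tameness and the elementary fact that raising a principal unit to the $p$-th power increases its $p$-adic distance to $1$.

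First I would suppose that $p^k,p^l\in E_d$ with $1\le k<l$, and set $u=a_d(p^k)\in\Z$. By Definition~\ref{def:Ed}, $\kappa_d^{p^{kd}}=u$ and $\kappa_d^{p^{ld}}=a_d(p^l)$. Since $p^{ld}=p^{kd}\cdot p^{(l-k)d}$, this gives the exact integer identity
\[
a_d(p^l)\ =\ \bigl(\kappa_d^{p^{kd}}\bigr)^{p^{(l-k)d}}\ =\ u^{p^{(l-k)d}}.
\]
Here $\kappa_d$ is a positive real and all exponents are positive integers, so no branch issues arise. Since $p\in\Tame_d$, we have $\nu_p(u-1)=\nu_p(a_d(p^k)-1)=m$, and likewise $\nu_p(a_d(p^l)-1)=m$.

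Next I would show that $\nu_p(w^p-1)\ge\nu_p(w-1)+1$ whenever $w\in\Z$ and $\nu_p(w-1)=s\ge1$. Write $w=1+x$. In $(1+x)^p-1=\sum_{j=1}^{p}\binom pj x^j$, the terms with $1\le j\le p-1$ have valuation at least $1+s$, and $x^p$ has valuation $ps\ge s+1$. Iterating this $r=(l-k)d\ge1$ times gives $\nu_p(u^{p^r}-1)\ge m+r>m$. Hence $\nu_p(a_d(p^l)-1)>m$, which contradicts tameness. So at most one $k$ can occur, and since $m\ge1$ this proves item 1.

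For item 2, the same argument works verbatim with $p=2$, because the binomial bound above did not use that $p$ is odd. Alternatively, Corollary~\ref{thm:p2} gives $m=1$, so $u\equiv3\pmod 4$. Then $u^{2}\equiv1\pmod 8$, so $\nu_2(a_d(2^l)-1)\ge3$, which again contradicts $\nu_2(a_d(2^l)-1)=1$.

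I do not expect a genuine obstacle. The one point to handle carefully is that the identity $a_d(p^l)=a_d(p^k)^{p^{(l-k)d}}$ lives in $\Z$, so the $p$-adic valuation argument applies directly, and the algebraicity of $\kappa_d$ plays no role beyond making $E_d$ meaningful. The proof in fact gives the stronger bound ``at most one'' for odd $p$ as well, and I would remark on this after the proof.
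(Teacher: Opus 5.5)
Your proof is correct, and it takes a different and sharper route than the paper.

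The paper fixes $k_1<k_2$ in the set and uses tameness only at the lower level, to get $\nu_p(A-1)=m$ for $A=a_d(p^{k_1})$. It then telescopes the level-to-level congruence of Theorem~\ref{prop:Gauss} to get $\nu_p(a_d(p^{k_2})-A)\ge k_1+1$. Finally it computes exactly $\nu_p(A^{p^r}-A)=m$, by a binomial expansion for odd $p$ and the $2$-adic lifting-the-exponent formula for $p=2$. The resulting inequality $k_1\le m-1$ only controls the non-maximal elements, which is where the bound $m$ comes from.

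You instead apply tameness at the upper level as well. The bound $\nu_p(u^{p^r}-1)\ge m+r>m=\nu_p(a_d(p^l)-1)$ is an immediate contradiction. This buys you three things:
\begin{itemize}
\item you never use the congruence $a_d(p^k)\equiv a_d(p^{k-1})\pmod{p^k}$, only $m\ge1$;
\item you treat $p=2$ and odd $p$ uniformly with the single bound $\nu_p(w^p-1)\ge\nu_p(w-1)+1$;
\item you get $|\{k:p^k\in E_d\}|\le1$ for every tame prime. This sharpens item 1 and also supersedes the count $\lfloor(m-1)/d\rfloor+1$ in Theorem~\ref{thm:radical-degree}.
\end{itemize}

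What the paper's route buys in exchange is that its key inequality does not depend on tameness. For any prime $p$, every element $k_1$ of the set other than the largest satisfies $k_1<\nu_p(a_d(p^{k_1})-1)$. Without tameness, your argument gives only the growth condition $\nu_p(a_d(p^l)-1)\ge\nu_p(a_d(p^k)-1)+(l-k)d$, not a contradiction.
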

\begin{proof}
Let $k_1<k_2$ both satisfy $p^{k_1},p^{k_2}\in E_d$, and set $A=a_d(p^{k_1})\ge2$, by Proposition~\ref{prop:tame_iff} (tameness of $p$), $\nu_p(A-1)=\nu_p(a_d(p)-1)=m$. Since $p^{k_1},p^{k_2}\in E_d$, we have that $A=\kappa_d^{p^{k_1d}}$ and that $a_d(p^{k_2})=\kappa_d^{p^{k_2d}}=\big(\kappa_d^{p^{k_1d}}\big)^{p^{(k_2-k_1)d}}=A^{p^{(k_2-k_1)d}}$. Set $r=(k_2-k_1)d\ge1$.  By Theorem~\ref{prop:Gauss}, $\nu_p(a_d(p^j)-a_d(p^{j-1}))\ge j$ for every $j\ge1$, telescoping from $j=k_1+1$ to $k_2$,
\[
a_d(p^{k_2})-A\ =\ \sum_{j=k_1+1}^{k_2}\big(a_d(p^j)-a_d(p^{j-1})\big),
\]
a sum of terms each of valuation $\ge k_1+1$, so $\nu_p(a_d(p^{k_2})-A)\ge k_1+1$. Combined with Step 1, $\nu_p(A^{p^r}-A)\ge k_1+1$. We claim $\nu_p(A^{p^r}-A)=m$ for every $r\ge1$.\\

First, assume that $p$ is an odd prime. write $A=1+p^mu$ where $p\nmid u$. A direct binomial expansion of $(1+p^mu)^p=1+p^{m+1}u+\sum_{i=2}^p\binom pip^{im}u^i$ shows every term with $i\ge2$ has $p$-valuation $\ge1+im\ge1+2m>m+1$ (as $p\mid\binom pi$ for $1\le i\le p-1$ contributing valuation $\ge1$, and the $i=p$ term has valuation $pm\ge3m>m+1$ for $m\ge1,p\ge3$), as the $i=1$ term has valuation exactly $m+1$, the ultrametric inequality gives $\nu_p(A^p-1)=m+1$ exactly. Iterating (the same computation applies verbatim with $m$ replaced by $m+1,m+2,\dots$ at each step, since $m+j\ge1$ throughout), $\nu_p(A^{p^r}-1)=m+r$ for every $r\ge0$. Since $r\ge1$, we can conclude that $m+r>m=\nu_p(A-1)$, and so by the ultrametric inequality (valuations differ, hence valuation of the difference equals the smaller one):
\[
\nu_p(A^{p^r}-A)\ =\ \nu_p\big((A^{p^r}-1)-(A-1)\big)\ =\ \min(m+r,\,m)\ =\ m.
\]

 Now, assume that $p=2$. Here $m=1$ (Corollary~\ref{thm:p2}), so $A\equiv3\pmod4$. If we write $A=1+2u$ for some odd $u$, then $A+1=2+2u=2(1+u)$ with $1+u$ even (as $u$ is odd), so $\nu_2(A+1)=:e\ge2$. The $p=2$ lifting-the-exponent identity (a standard fact, verified directly by induction from $A^{2^{r}}-1=(A^{2^{r-1}}-1)(A^{2^{r-1}}+1)$ and tracking that $A^{2^{r-1}}+1\equiv2\pmod4$ for $r\ge2$) gives $\nu_2(A^{2^r}-1)=\nu_2(A-1)+\nu_2(A+1)+r-1=1+e+r-1=e+r$ for $r\ge1$. As $e\ge2$ and $r\ge1$, we get that $e+r\ge3>1=m$, and so again by the ultrametric inequality, $\nu_2(A^{2^r}-A)=\min(e+r,1)=1=m$.\\

Therefore, in both cases we have that $k_1+1\le\nu_p(A^{p^r}-A)=m$, i.e.\ $k_1\le m-1$. Thus, we can conclude that any two elements $k_1<k_2$ of $\{k:p^k\in E_d\}$, necessarily $k_1\le m-1$. In particular at most one element of this set can exceed $m-1$ (if $k_1,k_2$ both exceeded $m-1$ with $k_1<k_2$, then $k_1\le m-1$, a contradiction), so the set has size at most $(m-1)+1=m$. This proves part 1. For part 2 (i.e. $p=2$ and $m=1$), the bound gives size $\le1$.
\end{proof}

We can in fact strengthen Proposition~\ref{thm:finiteexp} by looking at the degree of $\kappa_d$ (as an algebraic number). For that, we need the following lemma:

\begin{lemma}\label{lem:radical-degree}
Let $A\ge2$ be an integer, $p$ a prime, and $r\ge1$. Let $s$ be the largest integer with $0\le s\le r$ such that $A$ is a $p^s$-th power in $\Z$. Then $[\Q(A^{1/p^r}):\Q]=p^{r-s}$
\end{lemma}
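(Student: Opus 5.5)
The plan is to reduce to an irreducibility statement for a binomial $X^{p^t}-B$ and then apply the classical Vahlen--Capelli criterion. Throughout, $A^{1/p^r}$ means the positive real root, as for $\gamma_n$ in Lemma~\ref{lem:betaheight}.

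First I would extract the maximal $p$-power. Since $s\le r$ is chosen largest with $A$ a $p^s$-th power in $\Z$, write $A=B^{p^s}$ with $B$ a positive integer. Then $B\ge2$, because $A\ge2$. Taking positive real roots gives $A^{1/p^r}=B^{1/p^{t}}$ with $t=r-s$.

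If $t=0$, then $A^{1/p^r}=B\in\Z$ and the degree is $1=p^{0}$. So assume $t\ge1$. Then $B$ is not a $p$-th power in $\Z$: if $B=C^p$, then $A=C^{p^{s+1}}$ with $s+1\le r$, contradicting the maximality of $s$. A rational number whose $p$-th power is the integer $B$ is itself an integer, by the rational root theorem or unique factorization. Hence $B$ is not a $p$-th power in $\Q$ either.

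Next I would apply the Vahlen--Capelli theorem: for $a\in\Q$ and $n\ge1$, $X^n-a$ is irreducible over $\Q$ if and only if two conditions hold. The first is that $a\notin\Q^{q}$ for every prime $q\mid n$. The second is that, when $4\mid n$, $a\notin -4\Q^4$. Here $n=p^t$, so the only prime divisor is $p$, and the first condition is exactly what was shown above. The second condition can only matter when $p=2$ and $t\ge2$. It holds automatically because $B>0$, while every element of $-4\Q^4$ is $\le 0$.

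Hence $X^{p^t}-B$ is irreducible over $\Q$. It has $B^{1/p^t}$ as a root, so it is the minimal polynomial, and $[\Q(A^{1/p^r}):\Q]=p^{t}=p^{r-s}$.

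The main obstacle is the irreducibility step, specifically the exceptional $-4c^4$ clause at $p=2$; positivity of $B$ disposes of it. If a self-contained argument is preferred instead of citing Capelli, I would argue by induction along the tower $\Q\subset\Q(B^{1/p})\subset\cdots\subset\Q(B^{1/p^t})$. At each stage, the standard lemma says $X^p-c$ is either irreducible over a field $F$ or $c\in F^p$. So it suffices to show $\beta_j=B^{1/p^{j}}$ is not a $p$-th power in $F_j=\Q(\beta_j)$.

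For odd $p$, suppose $\beta_j=\gamma^p$ with $\gamma\in F_j$, and take norms $N_{F_j/\Q}$. We have $[F_j:\Q]=p^j$ by induction. One checks $N(\beta_j)=\pm B$: the conjugates of $\beta_j$ are $\zeta\beta_j$ for the $p^j$-th roots of unity $\zeta$, whose product is $\pm1$. This gives $\pm B=N(\gamma)^p$, and since $p$ is odd the sign can be absorbed into $N(\gamma)$, so $B$ is a rational $p$-th power, a contradiction.

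For $p=2$, the same norm computation needs extra care with signs. Here I would use that $F_j\subset\R$ and $\beta_j>0$, which is precisely where the positivity hypothesis enters.
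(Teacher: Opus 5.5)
Your proof is correct, but it follows a different route from the paper's. Both arguments begin the same way, by writing $A=B^{p^s}$ with $B>0$ not a $p$-th power and $t=r-s\ge1$.

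The paper then argues by \emph{ramification}. Because $B>0$ is not a $p$-th power, some prime $\ell$ has $p\nmid\nu_\ell(B)$. For any prime $\mathfrak L$ of $L=\Q(B^{1/p^t})$ above $\ell$, the integer identity $p^t v_{\mathfrak L}(B^{1/p^t})=e_{\mathfrak L/\ell}\,\nu_\ell(B)$ forces $p^t\mid e_{\mathfrak L/\ell}\le[L:\Q]$. The trivial upper bound from $X^{p^t}-B$ then finishes the proof. This route uses only integrality of normalized valuations and $e\le[L:\Q]$. It also shows that $\ell$ is totally ramified in $L$. It treats $p=2$ with no special case: positivity enters only when passing from ``not a $p$-th power'' to ``some $\nu_\ell(B)$ prime to $p$.'' For $B=-4$ every exponent is even and $X^4+4$ is indeed reducible, which is exactly the exceptional case you flagged.

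Your route through Vahlen--Capelli is purely field-theoretic, but it relies on a nontrivial classical theorem. You correctly dispose of its exceptional $-4\Q^4$ clause by positivity.

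Your self-contained tower argument is also sound, and the $p=2$ step you left vague closes by the same norm computation, with no extra care needed. The base case $j=0$ is the hypothesis $B\notin\Q^2$. For $j\ge1$, the product of all $2^j$-th roots of unity is $-1$, so $N_{F_j/\Q}(\beta_j)=-B<0$. This cannot equal the rational square $N_{F_j/\Q}(\gamma)^2$. Equivalently, the real embedding of $F_j$ sending $\beta_j$ to $-\beta_j$ maps it to a negative real number, which is not a square in $\mathbb{R}$.
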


\begin{proof}
Write $A=B^{p^s}$. If $s=r$, the radical is the integer $B$ and the formula is clear. Suppose $s<r$. Then $B$ is not a $p$-th power, so some rational prime $\ell$ satisfies $p\nmid\nu_\ell(B)$.
Set $\alpha=B^{1/p^{r-s}}$ and set $L=\Q(\alpha)$. For a prime $\mathfrak L$ of $L$ above $\ell$, normalize $v_{\mathfrak L}$ by $v_{\mathfrak L}(\ell)=e_{\mathfrak L/\ell}$. From $\alpha^{p^{r-s}}=B$ we get that $p^{r-s}v_{\mathfrak L}(\alpha)
=e_{\mathfrak L/\ell}\nu_\ell(B)$.
Since $p\nmid\nu_\ell(B)$, we get that  $p^{r-s}\mid e_{\mathfrak L/\ell}$. Thus we can conclude that $[L:\Q]\ge e_{\mathfrak L/\ell}\ge p^{r-s}$. Yet,  $\alpha$ satisfies $x^{p^{r-s}}-B=0$, therefore $[L:\Q]\le p^{r-s}$ and the result follows.
\end{proof}

\begin{corollary}\label{cor:degree-23}
For every $d,k\ge1$ we have that $[\Q(\gamma_{2^k}):\Q]=2^{kd}$ and that 
and $[\Q(\gamma_{3^k}):\Q]=3^{kd}$.
\end{corollary}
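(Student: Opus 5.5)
The plan is to apply Lemma~\ref{lem:radical-degree} with $A=a_d(p^k)$, $p\in\{2,3\}$ and $r=kd$. Then $\gamma_{p^k}=A^{1/p^{kd}}$, and its degree is $p^{kd-s}$, where $s$ is the largest exponent in $[0,kd]$ such that $A$ is a $p^s$-th power in $\Z$. It therefore suffices to show $s=0$, i.e.\ that $a_d(p^k)$ is not a $p$-th power of an integer. Note that Lemma~\ref{lem:radical-degree} uses the real positive root, which is exactly $\gamma_{p^k}$. We also have $A\ge2$, as noted in Lemma~\ref{lem:betaheight}.

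The key arithmetic input is the exact valuation $\nu_p(a_d(p^k)-1)=1$. This holds because $2$ and $3$ are tame with $\nu_2(a_d(2)-1)=\nu_3(a_d(3)-1)=1$ (Corollary~\ref{thm:p2}), so by Definition~\ref{def:tame} we get $\nu_p(a_d(p^k)-1)=1$ for every $k\ge1$. For $p=2$ this says $a_d(2^k)\equiv3\pmod4$. A perfect square is $\equiv0$ or $1\pmod4$, so $a_d(2^k)$ is not a square, and $s=0$.

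For $p=3$, suppose $a_d(3^k)=B^3$ with $B\in\Z$. Since $B^3\equiv1\pmod3$, we have $B\equiv1\pmod3$. Write $B=1+3t$; then
\[
B^3=1+9t+27t^2+27t^3\equiv1\pmod9.
\]
This gives $\nu_3(a_d(3^k)-1)\ge2$, contradicting the valuation being exactly $1$. Hence $s=0$ and the degree is $3^{kd}$. Alternatively, one could cite the $p$-adic root criterion of Proposition~\ref{prop:rootexist}, but the elementary mod-$9$ check is cleaner.

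There is no real obstacle beyond correctly invoking tameness to pass from level $p$ to level $p^k$. The only point needing care is that Lemma~\ref{lem:radical-degree} requires $A$ not to be a $p$-th power in $\Z$, which is weaker than the $p$-adic statement, so the integer congruence arguments above suffice.
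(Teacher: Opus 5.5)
Your proof is correct and follows the paper's own argument essentially verbatim. Tameness of $2$ and $3$ gives $\nu_p(a_d(p^k)-1)=1$, which rules out $a_d(2^k)$ being a square (it is $\equiv 3 \pmod 4$) and $a_d(3^k)$ being a cube, and Lemma~\ref{lem:radical-degree} with $r=kd$ then yields the full degree; the only cosmetic difference is that you check $B^3\equiv 1\pmod 9$ by direct expansion, where the paper quotes $\nu_3(B^3-1)=\nu_3(B-1)+1$.
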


\begin{proof}
From Corollary~\ref{thm:p2} we have that $p=2$ and $p=3$ are tame for every $d$.In addition, since $a_d(2^k)\equiv3\pmod4$, we have that $a_d(2^k)$ is not a square. In addition, since $\nu_3(a_d(3^k)-1)=1$, if $a_d(3^k)=B^3$, then $B\equiv1\pmod3$, and so we would have that $\nu_3(B^3-1)=\nu_3(B-1)+1\ge2$ which is a contradiction. Thus $a_d(3^k)$ is not a cube, and we can apply Lemma~\ref{lem:radical-degree} with $r=kd$.
\end{proof}

\begin{corollary}\label{cor:tame-degree}
If $p\in \Tame_d$ is odd, then $[\Q(\gamma_{p^k}):\Q]
\ge p^{kd+1-\nu_p(a_d(p)-1)}$. If, in addition, $\nu_p(a_d(p)-1)=1$, then the degree is maximal, i.e. $[\Q(\gamma_{p^k}):\Q]=p^{kd}.$
\end{corollary}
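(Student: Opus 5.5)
The plan is to reduce to Lemma~\ref{lem:radical-degree} with $A=a_d(p^k)$ and $r=kd$, so that $[\Q(\gamma_{p^k}):\Q]=p^{kd-s}$, where $s$ is the largest integer $\le kd$ such that $a_d(p^k)$ is a $p^s$-th power in $\Z$. It then suffices to prove $s\le \nu_p(a_d(p)-1)-1$. Write $m=\nu_p(a_d(p)-1)$. Since $p\in\Tame_d$, Definition~\ref{def:tame} gives $\nu_p(a_d(p^k)-1)=m$ for every $k$.

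The key step is a valuation computation for $p$-th powers. Suppose $a_d(p^k)=B^{p^s}$ with $B\in\Z$ and $s\ge1$. Reducing mod $p$ and using Corollary~\ref{cor:p_mod_1} gives $B^{p^s}\equiv1\pmod p$. By Fermat, $B^{p^s}\equiv B\pmod p$, so $B\equiv1\pmod p$ and $t=\nu_p(B-1)\ge1$. Next I apply the binomial lifting computation already carried out in the proof of Proposition~\ref{thm:finiteexp} for odd $p$: if $\nu_p(x-1)=t\ge1$, then $\nu_p(x^p-1)=t+1$. Iterating this gives $\nu_p(B^{p^s}-1)=t+s\ge 1+s$. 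Equating with tameness, $m=t+s\ge s+1$, so $s\le m-1$. If instead $s=0$, the bound holds trivially, since $m\ge1$.

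From this we get $[\Q(\gamma_{p^k}):\Q]=p^{kd-s}\ge p^{kd-m+1}$, which is the first claim. When $m=1$, the inequality forces $s=0$, and the degree equals $p^{kd}$, which is the maximal value because $\gamma_{p^k}$ is a root of $X^{p^{kd}}-a_d(p^k)$. This matches the argument of Corollary~\ref{cor:degree-23} for $p=3$.

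One point needs care: the lemma gives $p^{r-s}$ with $s$ capped at $r=kd$. If $m-1>kd$ the bound is vacuous anyway, since the degree is at least $1\ge p^{kd+1-m}$, so there is no issue. The oddness of $p$ is used exactly in the binomial step $\nu_p(x^p-1)=\nu_p(x-1)+1$, which can fail for $p=2$. I expect no real obstacle here; the only delicate point is confirming that $B\equiv1\pmod p$ before applying the lifting.
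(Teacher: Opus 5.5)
Your proposal is correct and follows the paper's own proof essentially step for step. Like the paper, you apply Lemma~\ref{lem:radical-degree} with $r=kd$, deduce $B\equiv1\pmod p$ from $a_d(p^k)=B^{p^s}\equiv1\pmod p$, use the odd-$p$ lifting identity $\nu_p(B^{p^s}-1)=\nu_p(B-1)+s$, and compare with the tame valuation $\nu_p(a_d(p^k)-1)=\nu_p(a_d(p)-1)$ to get $s\le\nu_p(a_d(p)-1)-1$. You also make explicit two small points the paper leaves implicit: the trivial case $s=0$, and why $p^{kd}$ is the maximal degree.
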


\begin{proof}
Suppose $a_d(p^k)=B^{p^s}$. Since $a_d(p^k)\equiv1\pmod p$, then we have that $B\equiv1\pmod p$. Since $p$ is odd, we have that $\nu_p(B^{p^s}-1)=\nu_p(B-1)+s\ge s+1$, and since $p \in \Tame_d$ we have that $\nu_p(B^{p^s}-1)= \nu_p(a_d(p)-1)$, so $s\le \nu_p(a_d(p)-1)-1$, and the result follows from Lemma~\ref{lem:radical-degree}.
\end{proof}

\begin{theorem}\label{thm:radical-degree}
Assume $\kappa_d$ is algebraic with $D=[\Q(\kappa_d):\Q]$ and suppose $p^k\in E_d$. Then:
\begin{enumerate}
\item $D$ is a positive power of $p$.
\item If $p \in \Tame_d$ is odd, then $D=p^r$ for $r\le kd\le r+\nu_p(a_d(p)-1)-1$. In particular, 
\[
|\{k:p^k\in E_d\}|
\le
\left\lfloor\frac{\nu_p(a_d(p)-1)-1}{d}\right\rfloor+1.
\]
\item If $p=2$ or $p=3$, then $D=p^{kd}$. In particular, at most one power of $p$ can belong to $E_d$.
\end{enumerate}
\end{theorem}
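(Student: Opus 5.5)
The key observation is that $p^k\in E_d$ means exactly $\kappa_d=\gamma_{p^k}=a_d(p^k)^{1/p^{kd}}$. Here $\kappa_d$ is the positive real $p^{kd}$-th root of the integer $A=a_d(p^k)\ge2$. Hence $\Q(\kappa_d)=\Q(A^{1/p^{kd}})$, and Lemma~\ref{lem:radical-degree} with $r_0=kd$ gives $D=p^{kd-s}$. Here $s$ is the largest integer in $[0,kd]$ with $A$ a $p^s$-th power in $\Z$.

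\textbf{Item 1.} It remains to see that the exponent $kd-s$ is positive. If $s=kd$, then $\kappa_d=B\in\Z$, contradicting $\kappa_d\in[\sqrt2,\sqrt3]$ (Remark~\ref{lem:1notin}). Hence $D=p^{kd-s}$ with $kd-s\ge1$.

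\textbf{Item 2.} Set $r=kd-s$. Then $r\le kd$ is automatic. For the other inequality, reuse the argument of Corollary~\ref{cor:tame-degree}. Write $A=B^{p^s}$. Since $A\equiv1\pmod p$ by Corollary~\ref{cor:p_mod_1}, we get $B\equiv1\pmod p$. Because $p$ is odd, $\nu_p(A-1)=\nu_p(B-1)+s\ge s+1$. Tameness gives $\nu_p(A-1)=\nu_p(a_d(p)-1)$, so $s\le\nu_p(a_d(p)-1)-1$, which is $kd\le r+\nu_p(a_d(p)-1)-1$.

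For the counting bound, note that $D=p^r$ is a single fixed number. Any two exponents $k,k'$ with $p^k,p^{k'}\in E_d$ therefore have $kd$ and $k'd$ both lying in the window $[r,\,r+\nu_p(a_d(p)-1)-1]$, which contains $\nu_p(a_d(p)-1)$ integers. The values $kd$ are multiples of $d$ spaced by at least $d$. So the number of admissible $k$ is at most $\lfloor(\nu_p(a_d(p)-1)-1)/d\rfloor+1$.

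\textbf{Item 3.} For $p=3$, Corollary~\ref{thm:p2} gives $\nu_3(a_d(3)-1)=1$, so the window above collapses and $D=3^{kd}$; equivalently, invoke Corollary~\ref{cor:degree-23} directly. For $p=2$, Corollary~\ref{cor:degree-23} gives $[\Q(\gamma_{2^k}):\Q]=2^{kd}$, and $\gamma_{2^k}=\kappa_d$. In both cases $D=p^{kd}$ determines $k$ uniquely, since $D$ does not depend on $k$. Hence at most one power of $p$ lies in $E_d$.

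The only delicate point is identifying $\Q(\kappa_d)$ with the radical field $\Q(A^{1/p^{kd}})$ for the real root. This is fine because Lemma~\ref{lem:radical-degree} is stated for $A^{1/p^r}$, and its proof (via ramification at a prime $\ell$ with $p\nmid\nu_\ell(B)$) applies to any root, in particular to the real positive one. I do not expect any other real obstacle.
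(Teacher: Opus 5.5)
Your proposal is correct and follows the paper's own route: Lemma~\ref{lem:radical-degree} applied to the positive real $p^{kd}$-th root of $a_d(p^k)$, the lifting-the-exponent argument of Corollary~\ref{cor:tame-degree} in the tame odd case, and Corollary~\ref{cor:degree-23} for $p=2,3$. You also supply two points the paper leaves implicit: that $s<kd$ because $\kappa_d\notin\Z$, and that $D$ does not depend on $k$, which drives both counting statements.

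One correction to your closing remark: the lemma's degree formula does not hold for an arbitrary root of $x^{p^{kd}}-A$. For example, $x^9-8$ has the root $2^{1/3}\zeta_9$ of degree $6$; for non-real roots $\beta^{p^{r-s}}=\zeta B$ rather than $B$, so the upper bound fails. This is harmless, since you only need the real positive root, and for that root the proof is valid.
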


\begin{proof}
Since $\kappa_d^{p^{kd}}=a_d(p^k)$ we have that $\kappa_d$ is the positive $p^{kd}$-th root of the integer $a_d(p^k)$. From Lemma~\ref{lem:radical-degree} we get that $D$ is a power of $p$. In the tame odd-prime case, Corollary~\ref{cor:tame-degree} gives . Counting the integer values of $k$ in an interval of length $m-1$ after scaling by $d$ gives the stated cardinality bound. The cases $p=2,3$ follow from \ref{cor:degree-23}.
\end{proof}

\begin{corollary}
    If $\kappa_d$ is algebraic and $[\Q(\kappa_d) \colon \Q]$ is not a prime power then $E_d$ contains no prime power.
\end{corollary}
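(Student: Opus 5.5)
The plan is to prove the contrapositive: if $\kappa_d$ is algebraic and some prime power $p^k$ lies in $E_d$, then $[\Q(\kappa_d):\Q]$ is a power of $p$. This is exactly item 1 of Theorem~\ref{thm:radical-degree}.

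Concretely, suppose $p^k\in E_d$ with $k\ge1$. By Definition~\ref{def:Ed} we have $\kappa_d^{p^{kd}}=a_d(p^k)$. Since $\kappa_d>0$ is real, $\kappa_d$ is the positive real $p^{kd}$-th root of the integer $A=a_d(p^k)$, which satisfies $A\ge2$.

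Apply Lemma~\ref{lem:radical-degree} with $r=kd$. It gives $[\Q(\kappa_d):\Q]=p^{kd-s}$, where $s$ is the largest exponent $\le kd$ for which $A$ is a $p^s$-th power. So the degree is a power of $p$, possibly $p^0=1$.

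The case $p^0=1$ is excluded as follows. Degree $1$ would mean $\kappa_d\in\Q$. But $\kappa_d$ is an algebraic integer, being a root of $X^{p^{kd}}-A$, so it would lie in $\Z$. This contradicts $\kappa_d\in[\sqrt2,\sqrt3]$ from Lemma~\ref{lem:naive_bound} (compare Remark~\ref{lem:1notin}). Hence the degree is a positive power of $p$, and in particular a prime power.

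This contradicts the hypothesis that $[\Q(\kappa_d):\Q]$ is not a prime power. Therefore no $p^k$ with $p$ prime and $k\ge1$ lies in $E_d$. The index $n=1=p^0$, if one counts it as a prime power, is already excluded by Remark~\ref{lem:1notin}.

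The only point needing care is how Lemma~\ref{lem:radical-degree} applies. It computes the degree of $A^{1/p^r}$, which must be read as the real positive root, and that is exactly $\kappa_d$. The case $s=r$ of the lemma (where $\kappa_d$ would be an integer) is excluded by the same interval argument. No tameness assumption on $p$ is needed.
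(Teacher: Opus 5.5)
Your proof is correct and takes the paper's route: the corollary is the contrapositive of item 1 of Theorem~\ref{thm:radical-degree}, which the paper also gets by applying Lemma~\ref{lem:radical-degree} to $\kappa_d$ viewed as the positive $p^{kd}$-th root of $a_d(p^k)$. You also rule out degree one explicitly, using integrality together with $\kappa_d\in[\sqrt2,\sqrt3]$; this supplies the ``positive'' part of that item, which the paper's proof leaves implicit.
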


\begin{proof}
    This follows directly from Theorem~\ref{thm:radical-degree}.
\end{proof}

\begin{theorem}\label{thm:localrigidity}
Suppose $\kappa_d$ is algebraic, let $K=\Q(\kappa_d)$, and assume that $p^k\in E_d$ for some prime $p$ and $k\ge1$. Then:
\begin{enumerate}
\item $\kappa_d$ is $\mathfrak p$-integral (i.e.\ $v_{\mathfrak p}(\kappa_d)\ge0$) at every prime $\mathfrak p$ of $K$ above $p$,
\item $\kappa_d\equiv1\pmod{\mathfrak p}$ for every such $\mathfrak p$. Hence, the reduction of $\kappa_d$ in the residue field $k_\mathfrak{p}$ of  $\mathcal{O}_K$ at $\mathfrak{p}$ agrees with the reduction of $\kappa_d^{(p)}\in1+p\Zp$.
\end{enumerate}
\end{theorem}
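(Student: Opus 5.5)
The plan is to exploit the exact identity $\kappa_d^{p^{kd}}=a_d(p^k)$ coming from $p^k\in E_d$, and to read it locally at each prime $\mathfrak p\mid p$ of $K=\Q(\kappa_d)$. Write $N=p^{kd}$ and $A=a_d(p^k)\in\Z_{\ge2}$, so that $\kappa_d^N=A$.

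For item 1, note that $\kappa_d$ is a root of the monic integer polynomial $X^N-A$. It is therefore an algebraic integer, exactly as in the proof of Proposition~\ref{thm:coprimality}, and so $v_{\mathfrak p}(\kappa_d)\ge0$ for every prime $\mathfrak p$ of $K$, in particular for those above $p$. An alternative route is to take valuations: $N\,v_{\mathfrak p}(\kappa_d)=v_{\mathfrak p}(A)=0$, because $A\equiv1\pmod p$ by Corollary~\ref{cor:p_mod_1}. This second route also shows that $\kappa_d$ is a $\mathfrak p$-adic unit, which we will use next.

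For item 2, reduce modulo $\mathfrak p$. Let $\bar\kappa\in k_{\mathfrak p}$ be the image of $\kappa_d$. Then $\bar\kappa^{N}=\bar A=1$ in $k_{\mathfrak p}$, again by Corollary~\ref{cor:p_mod_1}. Since $k_{\mathfrak p}$ has characteristic $p$, the Frobenius map $x\mapsto x^p$ is injective on $k_{\mathfrak p}$. From $(\bar\kappa-1)^{N}=\bar\kappa^N-1=0$ we obtain $\bar\kappa=1$, i.e.\ $\kappa_d\equiv1\pmod{\mathfrak p}$. One can equally argue as in case 3 of Lemma~\ref{lem:dichotomy}: the multiplicative order of $\bar\kappa\ne0$ divides $p^{f_{\mathfrak p}}-1$, which is coprime to $N$, so it must be $1$. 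This excludes cases 2 and 3 of Lemma~\ref{lem:dichotomy}, leaving case 1.

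For the final sentence, Proposition~\ref{prop:kappa_p_exists} gives $\kappa_d^{(p)}\in1+p\Zp$. Embedding $\Zp\subset\mathcal O_{K_{\mathfrak p}}$, we have $p\Zp\subset\mathfrak p\mathcal O_{K_{\mathfrak p}}$, so $\kappa_d^{(p)}$ reduces to $1$ in $k_{\mathfrak p}$, which agrees with the reduction of $\kappa_d$. No step is a real obstacle. The only care needed is the identification of residue fields: one should say that $\mathcal O_K/\mathfrak p\cong\mathcal O_{K_{\mathfrak p}}/\mathfrak p\mathcal O_{K_{\mathfrak p}}$, and that the reduction of the element $1\in\F_p\subset k_{\mathfrak p}$ is meaningful for both $\kappa_d$ and $\kappa_d^{(p)}$.
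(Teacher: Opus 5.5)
Your proposal is correct and follows essentially the paper's argument: integrality from $\kappa_d^{p^{kd}}=a_d(p^k)$, then reduction modulo $\mathfrak p$ using $a_d(p^k)\equiv 1\pmod p$ from Corollary~\ref{cor:p_mod_1}. The only variation is your main step, $(\bar\kappa-1)^{p^{kd}}=\bar\kappa^{p^{kd}}-1=0$ in characteristic $p$. It replaces the paper's observation that $x\mapsto x^{p^{kd}}$ is an automorphism of the cyclic group $k_{\mathfrak p}^\times$ of order $p^{f}-1$, which your alternative reproduces exactly, and it has the small advantage of not needing to check $\bar\kappa\neq 0$ first.
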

\begin{proof}
For part 1, since $p^k\in E_d$ then from Lemma~\ref{lem:betaheight} we have that $\kappa_d^{p^{kd}}=a_d(p^k)$ is an ordinary positive integer, hence $\mathfrak p$-integral (with $v_{\mathfrak p}\ge0$) at every prime $\mathfrak p$ of $K$, for every prime of $\mathbb{Q}$. So $p^{kd}\cdot v_{\mathfrak p}(\kappa_d)=v_{\mathfrak p}\big(\kappa_d^{p^{kd}}\big)=v_{\mathfrak p}(a_d(p^k))\ge0$, and dividing by $p^{kd}>0$ gives us that $v_{\mathfrak p}(\kappa_d)\ge0$.\\

For part 2, fix a prime $\mathfrak p\mid p$ of $K$, let $f=f(\mathfrak p/p)$ be its inertia degree, so the residue field is $ k_\mathfrak{p}\cong\F_{p^f}$, of multiplicative order $p^f-1$, that is coprime to $p$. By part 1, $\kappa_d$ has a well-defined reduction $\bar\kappa_d\in k_\mathfrak{p}$, since reduction mod $\mathfrak p$ is a ring homomorphism on $\mathfrak p$-integral elements, we have that $\bar\kappa_d^{\,p^{kd}}\ =\ \overline{\kappa_d^{p^{kd}}}\ =\ \overline{a_d(p^k)}$. By Theorem~\ref{prop:Gauss} and Corollary~\ref{cor:p_mod_1}, $a_d(p^k)\equiv\kappa_d^{(p)}\equiv1\pmod p$, as $p\in\mathfrak p$, this congruence mod $p$ (as rational integers) implies $\overline{a_d(p^k)}=1$ in $ k_\mathfrak{p}$ as well. So $\bar\kappa_d^{\,p^{kd}}=1$. In particular $\bar\kappa_d\ne0$ (else the left side would be $0$), so $\bar\kappa_d\in k_{\mathfrak p}^\times$, a cyclic group of order $p^f-1$. Since $\gcd(p^{kd},p^f-1)=1$ (as $p^f-1\equiv-1\pmod p$), the map $x\mapsto x^{p^{kd}}$ is a group automorphism of $k_{\mathfrak p}^\times$, the unique solution of $x^{p^{kd}}=1$ in this group is therefore $x=1$. Hence $\bar\kappa_d=1$, i.e.\ $\kappa_d\equiv1\pmod{\mathfrak p}$.
\end{proof}

\begin{remark}
    We can provide an alternative proof of part 2 of Theorem~\ref{thm:localrigidity} based upon the results from Section~\ref{sec:padic}. Fixing some $\mathfrak p\mid p$, since $a_d(p^k)=\kappa_d^{p^{kd}}$ is a rational integer, this equality holds in $K$, hence under the embedding $K\hookrightarrow K_\mathfrak p$. By Proposition~\ref{prop:kappa_p_exists}, $\kappa_d^{(p)}\equiv a_d(p^k)\pmod{p^k}$ in $\Zp$, i.e.\ $v_p\big(\kappa_d^{(p)}-\kappa_d^{p^{kd}}\big)\ge k$ in $\Zp$, extending scalars to $K_\mathfrak p$ (where $v_\mathfrak p|_{\Qp}=e_\mathfrak p\cdot v_p$), we get that  $v_\mathfrak p\big(\kappa_d^{(p)}-\kappa_d^{p^{kd}}\big)\ge e_\mathfrak p k\ge1>0$. Suppose toward a contradiction that $v_\mathfrak p(\kappa_d-1)\le0$. By cases 2 and 3 in Lemma~\ref{lem:dichotomy}, we have that $v_\mathfrak p\big(\kappa_d^{p^{kd}}-1\big)\le0$ for every $k\ge0$. By Proposition~\ref{prop:ANppinned}, if $p\in\Tame_d$ then $v_\mathfrak p(\kappa_d^{(p)}-1)=e_\mathfrak p r_p>0$ is fixed, more simply, directly from Proposition~\ref{prop:kappa_p_exists}, $\kappa_d^{(p)}\in1+p\Zp$ so $v_p(\kappa_d^{(p)}-1)\ge1$, i.e.\ $v_\mathfrak p(\kappa_d^{(p)}-1)\ge e_\mathfrak p\ge1$, in particular $v_\mathfrak p(\kappa_d^{(p)}-1)\ge1>0\ge v_\mathfrak p(\kappa_d^{p^{kd}}-1)$, so by the ultrametric inequality $v_\mathfrak p\big(\kappa_d^{(p)}-\kappa_d^{p^{kd}}\big)=v_\mathfrak p\big(\kappa_d^{p^{kd}}-1\big)\le0$ for every $k$. Taking $k$ as in the hypothesis, this contradicts $v_\mathfrak p(\kappa_d^{(p)}-\kappa_d^{p^{kd}})\ge e_\mathfrak p k\ge1>0$.
\end{remark}

\begin{corollary}\label{cor:normbound}
    If $p^k \in E_d$ then $p\le|N_{K/\Q}(\kappa_d-1)|$.
\end{corollary}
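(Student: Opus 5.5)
Assume $p^k\in E_d$ and set $K=\Q(\kappa_d)$. The plan is to show that $\kappa_d-1$ is a nonzero algebraic integer lying in every prime of $\mathcal O_K$ above $p$. This forces $p$ to divide its norm, and the bound $p\le|N_{K/\Q}(\kappa_d-1)|$ follows.

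\textbf{Integrality.} In the proof of Theorem~\ref{thm:coprimality} (and in Lemma~\ref{lem:betaheight}) we saw that $\kappa_d$ is a root of the monic integer polynomial $X^{p^{kd}}-a_d(p^k)$. Hence $\kappa_d\in\mathcal O_K$ and $\beta=\kappa_d-1\in\mathcal O_K$. Moreover $\beta\neq0$, since $\kappa_d\in[\sqrt2,\sqrt3]$ by Lemma~\ref{lem:naive_bound}. Therefore $N_{K/\Q}(\beta)$ is a nonzero rational integer, and it equals $\pm\,|\mathcal O_K/\beta\mathcal O_K|$.

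\textbf{Divisibility.} By part 2 of Theorem~\ref{thm:localrigidity}, $\kappa_d\equiv1\pmod{\mathfrak p}$ for every prime $\mathfrak p\mid p$ of $K$, i.e.\ $\beta\in\mathfrak p$. Fix one such $\mathfrak p$; at least one exists since $p\mathcal O_K\neq\mathcal O_K$. Then $\mathfrak p$ divides the ideal $\beta\mathcal O_K$. By multiplicativity of the ideal norm,
\[
|N_{K/\Q}(\beta)|=N(\beta\mathcal O_K)=N(\mathfrak p)\cdot N\big(\beta\mathcal O_K\,\mathfrak p^{-1}\big),
\]
where the last factor is a positive integer. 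Since $N(\mathfrak p)=p^{f_\mathfrak p}\ge p$, we get $|N_{K/\Q}(\kappa_d-1)|\ge p$.

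For a stronger bound one could use all the primes above $p$ together: $\prod_{\mathfrak p\mid p}\mathfrak p$ divides $\beta\mathcal O_K$, which gives $|N(\beta)|\ge p^{\sum_{\mathfrak p\mid p} f_\mathfrak p}$. This is not needed here.

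The only point needing care is the integrality of $\kappa_d$, since without it the norm need not be an integer. This is supplied directly by the monic relation coming from $p^k\in E_d$. The rest is the standard divisibility of the norm by the norm of any prime ideal containing the element.
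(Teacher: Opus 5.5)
Your proposal is correct and follows the paper's proof: Theorem~\ref{thm:localrigidity} puts $\kappa_d-1$ in a prime $\mathfrak p\mid p$, so $N(\mathfrak p)=p^{f_\mathfrak p}$ divides the nonzero rational integer $N_{K/\Q}(\kappa_d-1)$. You also spell out why $\kappa_d$ is an algebraic integer, via the monic relation $X^{p^{kd}}-a_d(p^k)$; the paper uses this fact but only asserts it.
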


\begin{proof}
    By Theorem~\ref{thm:localrigidity} we have that $\kappa_d\equiv1\pmod{\mathfrak p}$ for every prime $\mathfrak p\mid p$ of $K=\Q(\kappa_d)$, and in particular $\mathfrak p\mid(\kappa_d-1)$ for at least one such $\mathfrak p$. Thus, $p=N(\mathfrak p)^{1/f}\mid N_{K/\Q}(\kappa_d-1)$ divides the norm (as the ideal $(\kappa_d-1)$ is divisible by $\mathfrak p$, and so, its norm, which equals to $|N_{K/\Q}(\kappa_d-1)|$, is divisible by $N(\mathfrak p)=p^f$). Since $\kappa_d\ne1$ (as $\kappa_d\in[\sqrt2,\sqrt3]$ from Lemma~\ref{lem:naive_bound}) we have that $\kappa_d-1$ is a nonzero algebraic integer, and so $N_{K/\Q}(\kappa_d-1)$ is a nonzero rational integer divisible by $p$, giving us that $p\le|N_{K/\Q}(\kappa_d-1)|$ as desired.
\end{proof}

\begin{corollary}\label{thm:normbound}
    Assume that $p ^k \in E_d$ and assume that $|\alpha|<R$ for every conjugate $\alpha \neq \kappa_d$ of $\kappa_d$. Then $$p \leq \big(\big(2^{2d+1}-1\big)^{1/4d}-1\big)(R+1)^{D-1}.$$ 
\end{corollary}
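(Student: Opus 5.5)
We build on Corollary~\ref{cor:normbound}, which gives $p\le|N_{K/\Q}(\kappa_d-1)|$ with $K=\Q(\kappa_d)$. It then suffices to bound this norm from above by factoring it over the conjugates of $\kappa_d$.

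Since $p^k\in E_d$, $\kappa_d$ is a root of $X^{p^{kd}}-a_d(p^k)$, so it is an algebraic integer. Let $\kappa_d=\alpha_1,\alpha_2,\dots,\alpha_D$ be its conjugates. The conjugates of $\kappa_d-1$ are the numbers $\alpha_i-1$, so
\[
|N_{K/\Q}(\kappa_d-1)|=|\kappa_d-1|\prod_{i=2}^{D}|\alpha_i-1|.
\]
We bound the two parts separately.

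\begin{itemize}
\item For the first factor, Theorem~\ref{thm:decreasing}, item 2, gives $\kappa_d\le(2^{2d+1}-1)^{1/4d}$. Since $\kappa_d>1$, this yields $0<\kappa_d-1\le(2^{2d+1}-1)^{1/4d}-1$.
\item For the remaining factors, the triangle inequality and the hypothesis $|\alpha_i|<R$ give $|\alpha_i-1|\le|\alpha_i|+1<R+1$ for each $i\ge2$.
\end{itemize}

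There are exactly $D-1$ such factors, so multiplying gives
\[
p\le|N_{K/\Q}(\kappa_d-1)|\le\big((2^{2d+1}-1)^{1/4d}-1\big)(R+1)^{D-1}.
\]

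The main point to check is that the conjugates $\alpha_i$ with $i\ge2$ are exactly the conjugates different from $\kappa_d$. This holds because $\kappa_d$ is a simple root of its minimal polynomial, which is separable in characteristic zero, so $\kappa_d$ appears only once in the list. When $D=1$ the product is empty and the bound reduces to $p\le\kappa_d-1$. This case is in fact excluded: Theorem~\ref{thm:radical-degree} shows $D$ is a positive power of $p$, so $D>1$.
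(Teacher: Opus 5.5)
Your proof is correct and is essentially the paper's argument. You write $N_{K/\Q}(\kappa_d-1)$ as the product of $\sigma(\kappa_d)-1$ over the $D$ embeddings, bound the factor at $\kappa_d$ by $(2^{2d+1}-1)^{1/4d}-1$ using Theorem~\ref{thm:decreasing}, bound each of the other $D-1$ factors by $R+1$ via the triangle inequality, and conclude with Corollary~\ref{cor:normbound}; your closing remark on $D=1$ is harmless but not needed, since the inequality holds with an empty product anyway.
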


\begin{proof}
    $N_{K/\Q}(\kappa_d-1)=\prod_\sigma(\sigma(\kappa_d)-1)$, where the product is over all $D$ embeddings $\sigma:K\to\C$. From Theorem~\ref{thm:decreasing} we have that $|\kappa_d-1|\le \big(2^{2d+1}-1\big)^{1/4d}-1$. Every other embedding sends $\kappa_d$ to a conjugate of modulus $\le R$, and so contributes $|\sigma(\kappa_d)-1|\le|\sigma(\kappa_d)|+1\le R+1$. Hence $|N_{K/\Q}(\kappa_d-1)|\le(\sqrt3-1)\cdot (R+1)^{D-1}$, and the result follows from Corollary~\ref{cor:normbound}.
\end{proof}

\begin{corollary}\label{cor:modprigid}
Suppose $\kappa_d$ is algebraic with minimal polynomial $f$. Suppose $p^k\in E_d$ for some $k\ge1$ and some prime $p$. Then:
\begin{enumerate}
    \item $f(x)\equiv(x-1)^D \pmod p$.
    \item $p$ divides the discriminant of the polynomial $f$,
    \item If, in addition,  $p\nmid[\mathcal O_K:\Z[\kappa_d]]$ then $p$ is totally ramified in $K$, i.e. there exists a unique prime $\mathfrak{p} \mid p$. 
\end{enumerate}
\end{corollary}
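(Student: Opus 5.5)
The plan is to read everything off the relation $\kappa_d^{p^{kd}}=a_d(p^k)$ together with Theorem~\ref{thm:localrigidity}. First, $\kappa_d$ is a root of $x^{p^{kd}}-a_d(p^k)\in\Z[x]$, so $f$ is monic with integer coefficients and divides this polynomial in $\Z[x]$ (Gauss's lemma). By Corollary~\ref{cor:p_mod_1}, $a_d(p^k)\equiv1\pmod p$. Hence, reducing mod $p$,
\[
x^{p^{kd}}-a_d(p^k)\equiv x^{p^{kd}}-1=(x-1)^{p^{kd}}\pmod p ,
\]
using Frobenius in $\F_p[x]$. Since $\F_p[x]$ is a UFD, the reduction $\bar f$ divides $(x-1)^{p^{kd}}$. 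Because $\bar f$ is monic of degree $D$, this forces $\bar f=(x-1)^D$, which is item 1. The same conclusion can be cross-checked via Theorem~\ref{thm:localrigidity}: every conjugate of $\kappa_d$ reduces to $1$ modulo every prime above $p$ in the splitting field.

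For item 2, I would use the formula $\operatorname{disc}(f)=\prod_{i<j}(\alpha_i-\alpha_j)^2$ over the roots $\alpha_i$ of $f$ in a splitting field $L$. Fix a prime $\mathfrak P$ of $L$ above $p$. Applying Theorem~\ref{thm:localrigidity} to each conjugate field (or using item 1 directly, since $\bar f$ has all roots equal to $1$ in the residue field), every $\alpha_i\equiv1\pmod{\mathfrak P}$. So each difference $\alpha_i-\alpha_j$ lies in $\mathfrak P$. Provided $D\ge2$, the discriminant is a rational integer lying in $\mathfrak P\cap\Z=p\Z$. The degenerate case $D=1$ cannot occur, because $\kappa_d\in[\sqrt2,\sqrt3]$ is not an integer (Remark~\ref{lem:1notin}); in fact Theorem~\ref{thm:radical-degree} already gives $D=p^r$ with $r\ge1$.

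For item 3, I would invoke the Dedekind–Kummer theorem. When $p\nmid[\mathcal O_K:\Z[\kappa_d]]$, the factorization of $p\mathcal O_K$ mirrors the factorization of $\bar f$ into monic irreducibles over $\F_p$. Since $\bar f=(x-1)^D$, this gives $p\mathcal O_K=\mathfrak p^D$ with $\mathfrak p=(p,\kappa_d-1)$ and residue degree $1$. Thus $e=D=[K:\Q]$, so $p$ is totally ramified and $\mathfrak p$ is the unique prime above $p$.

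The only step needing real care is item 1: the divisibility must be taken in $\Z[x]$ before reducing, which requires the monic/Gauss argument. The rest is standard.
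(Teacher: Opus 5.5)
Your proof is correct. Items 2 and 3 match the paper: for item 2 the paper just notes that $\bar f=(x-1)^D$ with $D>1$ has a repeated root, which is equivalent to your product-of-differences argument, and item 3 is the same Dedekind--Kummer step.

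The genuine difference is in item 1. The paper derives it from Theorem~\ref{thm:localrigidity}. Each irreducible factor $g$ of $\bar f$ gives a maximal ideal $(p,g(\kappa_d))$ of $\Z[\kappa_d]$, and the lying-over theorem produces a prime $\mathfrak p$ of $\mathcal O_K$ above it. Since $\kappa_d\equiv1\pmod{\mathfrak p}$, this gives $g(1)=0$, so $g=x-1$. You instead work entirely in $\F_p[x]$: $f$ divides $x^{p^{kd}}-a_d(p^k)$ in $\Z[x]$, this reduces to $(x-1)^{p^{kd}}$ modulo $p$, and unique factorization forces $\bar f=(x-1)^D$.

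Both arguments rest on the same mechanism, namely $a_d(p^k)\equiv1\pmod p$ together with injectivity of Frobenius in characteristic $p$. Your route needs neither the lying-over theorem nor Theorem~\ref{thm:localrigidity}. In fact it recovers part 2 of that theorem as a byproduct, since every root of $\bar f$ in any residue field above $p$ must equal $1$. The paper's route presents the corollary as a globalization of its local rigidity statement, which suits its narrative but uses more machinery than item 1 requires.
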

\begin{proof}
From Theorem~\ref{thm:localrigidity} we have that  $\kappa_d\equiv1\pmod{\mathfrak p}$ for every prime $\mathfrak p$ of $K=\Q(\kappa_d)$ above $p$.
Let $A=\Z[\kappa_d]\cong\sfrac{\Z[x]}{\langle f\rangle}$.  Every irreducible factor $g$ of $\overline f\in\F_p[x]$ determines a maximal ideal of $A$ above $p$. Since $\mathcal{O}_K$ is integral over $A$, the lying-over theorem supplies a prime $\mathfrak p$ of $\mathcal{O}_K$ above that maximal ideal. The reduction of $\kappa_d$ at $\mathfrak p$ is $1$. Therefore the corresponding irreducible factor is $x-1$. Thus, $x-1$ is the only irreducible factor of $\overline f$. Since (up to a constant) $f$ is monic of degree $D$, we can conclude that $\overline f=(x-1)^D$. Since $p^k \in E_d$ then we have that $\kappa_d$ is an algebraic integer, but as   $\kappa_d \in [\sqrt{2}, \sqrt{3}]$ by Lemma~\ref{lem:naive_bound}), then we must have that $D>1$. Therefore $\overline f$ has a repeated root, and the discriminant of $f$ vanishes modulo $p$. Finally, for item 3, from the Dedekind-Kummer theorem (which is valid since $p\nmid[\mathcal O_K:\Z[\kappa_d]]$) we have that, $f(x)\bmod p$ factors as $\prod_ig_i(x)^{e_i}$ where $p\mathcal O_K=\prod_i\mathfrak p_i^{e_i}$ and $g_i$ is the minimal polynomial over $\F_p$ of the residue $\bar\kappa_d\in\mathcal O_K/\mathfrak p_i$. But since $f(x)\equiv (x-1)^D\pmod p$, we can conclude that  $\sum_ie_if_i=D$ and $f_i=1$ for every $i$, and the result follows.
\end{proof}

\begin{remark}
We can use Theorem~\ref{thm:localrigidity} to reprove Proposition~\ref{thm:finiteexp} (in the case where $p$ is odd). Suppose $p^k\in E_d$ for some $k$ in an infinite set $S \subseteq \mathbb{N}$. Fix any $k_1\in S$, then Theorem~\ref{thm:localrigidity} gives us that $v_\mathfrak p(\kappa_d-1)>0$ for every $\mathfrak p\mid p$, i.e.\ we are in Case 1 of Lemma~\ref{lem:dichotomy} at every such $\mathfrak p$. Fixing some $\mathfrak p\mid p$, by case 1 of Theorem~\ref{thm:comparison}, there is $k_0$ such that $v_\mathfrak p\big(\kappa_d^{(p)}-\kappa_d^{p^{kd}}\big)=e_\mathfrak p r_p$ for every $k\ge k_0$, where $r_p=\nu_p(a_d(p)-1)$ is fixed (independent of $k$). But for $k\in S$, as in the proof of Theorem~\ref{thm:localrigidity}, $v_\mathfrak p\big(\kappa_d^{(p)}-\kappa_d^{p^{kd}}\big)\ge e_\mathfrak pk$. Combining, for $k\in S$ with $k\ge k_0$: $e_\mathfrak pk\le e_\mathfrak pr_p$, that is $k\le r_p$. Since $S$ is infinite, it contains some $k\ge\max(k_0,r_p+1)$, and so we have that$k\le r_p<k$, which is a contradiction.
\end{remark}

We end this section with a discussion on $E_d$ in the case where $\kappa_d$ belongs to one of three special families of algebraic numbers, the first being algebraic units:

\begin{proposition}\label{cor:quadunit}
Assume that $\kappa_d$ is an algebraic integer and a unit in its ring of integers $\mathcal O_{K}$. Then:
\begin{enumerate}
    \item $E_d=\emptyset$.
    \item If $D=2$ then $d=1$.
\end{enumerate}

\end{proposition}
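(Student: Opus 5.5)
For item 1, I will argue by contradiction. Suppose $n\in E_d$, so that $\kappa_d^{n^d}=a_d(n)$. Taking norms from $K=\Q(\kappa_d)$ gives
\[
N_{K/\Q}(\kappa_d)^{n^d}=N_{K/\Q}\big(a_d(n)\big)=a_d(n)^{D}.
\]
Because $\kappa_d$ is a unit in $\mathcal O_K$, we have $N_{K/\Q}(\kappa_d)=\pm1$. The left side therefore has absolute value $1$. The right side is at least $2^D\ge2$, since $a_d(n)\ge2$ by Lemma~\ref{lem:betaheight}. This is a contradiction, so $E_d=\emptyset$.

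A valuation argument works equally well. For any prime $q\mid a_d(n)$ and any prime $\mathfrak q\mid q$ of $K$, we would have $n^dv_{\mathfrak q}(\kappa_d)=v_{\mathfrak q}(a_d(n))>0$. But $v_{\mathfrak q}(\kappa_d)=0$ for a unit. The norm version is shorter, so I will write that one.

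For item 2, assume $D=2$ and $d\ge2$, and aim for a contradiction. Since $\kappa_d$ is a real quadratic unit with $\kappa_d>1$, its conjugate is $\kappa_d'=\pm\kappa_d^{-1}$, and its minimal polynomial is $x^2-tx\pm1$ with $t=\kappa_d+\kappa_d'\in\Z$.

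Theorem~\ref{thm:decreasing} gives $\sqrt2<\kappa_d\le(2^{2d+1}-1)^{1/4d}$. For $d\ge2$ this bound is at most $31^{1/8}\approx1.536$, so $\kappa_d\in(1.414,1.537)$. I then split by sign.

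\textbf{Norm $-1$.} Here $\kappa_d'=-1/\kappa_d$, so $t=\kappa_d-1/\kappa_d\in(0.707,0.887)$. No integer lies in this interval.

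\textbf{Norm $+1$.} Here $t=\kappa_d+1/\kappa_d\in(2.12,2.19)$, again containing no integer. Note that $t=2$ is excluded anyway, since it would force $\kappa_d=1$.

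Both cases give a contradiction, so $d=1$. As a consistency check, for $d=1$ we get $\kappa_1=\frac{1+\sqrt5}{2}$, which has $t=1$ and norm $-1$, and this is allowed. The only step needing care is checking the numerical ranges, in particular that $(2^{2d+1}-1)^{1/4d}$ is largest at $d=2$ among $d\ge2$. That follows from the bound $\sqrt2\cdot2^{1/4d}$, which is decreasing in $d$, and which at $d=2$ equals $\approx1.682$.

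That cruder bound is not enough for the norm $+1$ case, because $\kappa+1/\kappa$ would reach $2.27$, still below $3$ but now allowing nothing new. For norm $-1$, though, $\kappa-1/\kappa$ at $1.682$ is about $1.087$, which admits $t=1$ and would give $\kappa=\varphi\approx1.618$. So I must use the sharper Kahn--Zhao value. For $d\ge2$, $(2^{2d+1}-1)^{1/4d}\le31^{1/8}<1.54<\varphi$, using the monotonicity $\kappa_{d}\le\kappa_2$ from Theorem~\ref{thm:decreasing}. Ruling out $t=1$ this way is the main obstacle. Alternatively I could cite the known value $\kappa_2\approx1.5030$ together with $\kappa_d\le\kappa_2$.
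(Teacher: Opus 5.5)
Your proof is correct and matches the paper's: item 1 uses the same norm identity $N_{K/\Q}(\kappa_d)^{n^d}=a_d(n)^D$, and item 2 applies Vieta to $x^2-tx\pm1$ together with bounds on $\kappa_d$. The only difference is that the paper first uses $[\sqrt2,\sqrt3]$ to force $x^2-x-1$ and then excludes $\varphi$ via Theorem~\ref{thm:decreasing}, whereas you apply $\kappa_d\le\kappa_2\le 31^{1/8}$ from the outset. One numerical slip: at $d=2$, $\sqrt2\cdot2^{1/4d}\approx1.542$, not $1.682$ (that is its $d=1$ value), so even this cruder bound gives $\kappa_d-1/\kappa_d<1$, and ruling out $t=1$ is not actually an obstacle.
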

\begin{proof}
For part 1, suppose $n_0\in E_d$, so $a_d(n_0)=\kappa_d^{n_0^d}$.Since $\kappa_d$ is a unit, then by taking norms over $\Q$ gives us that $N_{\Q(\kappa_d)/\Q}(\kappa_d)=\pm1$, and so $N(\kappa_d^{n_0^d})=N(\kappa_d)^{n_0^d}=\pm1$. On the other hand,  $\kappa_d^{n_0^d}=a_d(n_0)$ is already a rational integer, and so fixed by every embedding of $\Q(\kappa_d)$ into $\C$. This gives us that $N(\kappa_d^{n_0^d})=a_d(n_0)^D$. Therefore we get that $a_d(n_0)^D=\pm1$, which tells us that $a_d(n_0)=1$ (as $a_d(n_0)>0$), which contradicts Lemma~\ref{lem:betaheight} that tells us that $a_d(n_0)\ge2$.\\

For part 2, since $\kappa_d$ is a unit, its norm is $\pm1$, and so we can write its minimal polynomial as $x^2-bx+c\in\Z[x]$ where $c=\pm1$ (since $c$ is exactly the norm of $\kappa_d$ in this case). Since $\kappa_d$ is a quadratic unit then the other conjugate of $\kappa_d$ is $\frac{N_{K/\Q}(\kappa_d)}{\kappa_d} = \pm \frac{1}{\kappa_d}$, and by Vieta's formula we have that $b=\kappa_d \pm \frac{1}{\kappa_d}$. Since the functions $f_{\pm}(x)=x\pm\frac{1}{x}$ are increasing on the interval $[\sqrt{2}, \sqrt{3}]$, we have that $b \in [\sqrt{2}\pm\frac{1}{\sqrt{2}}, \sqrt{3}\pm\frac{1}{\sqrt{3}}]$. The plus case is impossible since $b$ is an integer but there are no integers in the interval $[\sqrt{2}+\frac{1}{\sqrt{2}}, \sqrt{3}+\frac{1}{\sqrt{3}}]$, so we can conclude that $c=-1$ and so $b=1$. Therefore the minimal polynomial of $\kappa_d$ is $x^2-x-1$, and so from Lemma~\ref{lem:naive_bound} and from Theorem~\ref{thm:decreasing} we must have that $d=1$ (noting that $\tfrac{1+\sqrt5}2$ is indeed a quadratic unit as a solution to the equation $x(x-1)=1$).
\end{proof}

\begin{remark}
For $d=1$, Proposition~\ref{cor:quadunit} correctly predicts $E_1=\emptyset$, as $a_1(n)=L_n\in\Z$ while $\kappa_1^n\notin\Z$ for $n\ge1$, so $a_1(n)\ne\kappa_1^n$ always. %More generally, Proposition~\ref{cor:quadunit} shows that if some $\kappa_d$ with $d\ge2$ were ever found to be an algebraic-integer unit - the natural higher-dimensional analogue of the $d=1$ picture - then (unlike at $d=1$, where the relevant "coincidence-freeness" is a triviality about integrality) $E_d$ would be provably empty, and none of Theorem~\ref{thm:coprimality} or~\ref{thm:finiteexp} would have any content (both being statements about nonempty $E_d$). This is consistent with, and a bad case of, the general rigidity picture: an algebraic-integer unit is exactly a $\kappa_d$ with "no room" for the norm computation in Theorem~\ref{thm:coprimality}'s proof to produce an integer of the required size, in the sharpest possible way.
\end{remark}

We now look at two other special families of algebraic numbers, namely the Pisot numbers and the Salem numbers. 

\begin{definition}
Let $\alpha>1$ be a real algebraic integer of degree $g$ with Galois conjugates $\alpha = \alpha_1, \dots, \alpha_g$.  
    \begin{itemize}
        \item We say that $\alpha$ is a \textbf{Pisot Number} if $|\alpha_i|<1$ for every $i>1$.
        \item We say that $\alpha$ is a \textbf{Salem Number} if $g \geq 4$ is even, $\alpha_i=\alpha^{-1}$ for some $i$, and $|\alpha_j|=1$ for every $j \notin\{1,i\}$.
    \end{itemize}
\end{definition}

\begin{proposition}
If $\kappa_d$ is a quadratic Pisot number, $E_d$ contains no prime power.
\end{proposition}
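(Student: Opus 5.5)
The plan is to combine the norm bound of Corollary~\ref{cor:normbound} with the Pisot condition on the single nontrivial conjugate. This gives an upper bound on any witnessing prime that is strictly less than $2$. Suppose, towards a contradiction, that $\kappa_d$ is a quadratic Pisot number and that $p^k\in E_d$ for some prime $p$ and some $k\ge1$. Write $K=\Q(\kappa_d)$, so $D=2$, and let $\kappa_d'$ denote the other Galois conjugate. By the definition of a Pisot number, $|\kappa_d'|<1$.

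By Corollary~\ref{cor:normbound}, $p\le|N_{K/\Q}(\kappa_d-1)|$. Here $N_{K/\Q}(\kappa_d-1)$ is a nonzero rational integer: $\kappa_d$ is an algebraic integer and $\kappa_d\neq1$. For a quadratic field the norm has only two factors:
\[
|N_{K/\Q}(\kappa_d-1)| \;=\; |\kappa_d-1|\cdot|\kappa_d'-1|.
\]
I bound the two factors separately. For the first, Lemma~\ref{lem:naive_bound} gives $\kappa_d\in[\sqrt2,\sqrt3]$, so $0<\kappa_d-1\le\sqrt3-1<0.74$. (Theorem~\ref{thm:decreasing} gives the sharper bound $(2^{2d+1}-1)^{1/4d}-1$, but it is not needed.) For the second, the Pisot condition gives $|\kappa_d'-1|\le|\kappa_d'|+1<2$. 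Multiplying, $|N_{K/\Q}(\kappa_d-1)|<2(\sqrt3-1)<1.47$. This is exactly the case $R=1$, $D=2$ of Corollary~\ref{thm:normbound}.

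A nonzero rational integer of absolute value less than $1.47$ must be $\pm1$. Hence $p\le1$, which contradicts $p$ being prime, so no prime power lies in $E_d$.

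I expect no serious obstacle. The only points needing care are that $\kappa_d-1\ne0$, so the norm is a nonzero integer and the divisibility argument behind Corollary~\ref{cor:normbound} applies, and that the Pisot bound is strict. As a sanity check, there is a second route. Theorem~\ref{thm:radical-degree} forces $D=2$ to be a power of $p$, so $p=2$, and then $D=2^{kd}$ gives $d=k=1$. This collapses to the golden ratio, and Proposition~\ref{cor:quadunit} shows that $E_1=\emptyset$ there. Either route suffices; I would write the norm-bound argument as the main proof.
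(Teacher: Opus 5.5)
Your proof is correct and is essentially the paper's argument. The paper simply applies Corollary~\ref{thm:normbound} with $R=1$ and $D=2$, and you have unpacked that corollary by bounding the two factors of $N_{K/\Q}(\kappa_d-1)$ directly and then invoking Corollary~\ref{cor:normbound}; your alternative route through Theorem~\ref{thm:radical-degree} (forcing $p=2$ and $k=d=1$) and Proposition~\ref{cor:quadunit} is also valid but not needed.
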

\begin{proof}
    The result follows from Corollary~\ref{thm:normbound}, noting that we can choose $R=1$. 
\end{proof}

%(A real quadratic Salem number is impossible: its single other conjugate would need modulus exactly $1$ while being a real algebraic number, forcing it to be $\pm1$, but then $\kappa_d$ would be a root of $X^2\mp X-c$ for the corresponding rational integer trace/norm relation making it rational - contradicting irrationality of $\kappa_d$ on $[\sqrt2,\sqrt3]$, or more simply: a real quadratic algebraic integer's two conjugates are real, and modulus exactly $1$ for a real number just means the conjugate is $\pm1\in\Z$, forcing $\kappa_d\in\Z$ too via the trace being an integer, again impossible.)

\begin{proposition}\label{prop:salem}
If $\kappa_d$ is a Salem number of degree $D$ and $p^k \in E_d$,
\[
p\ <\ \Bigg( \big(2^{2d+1}-1\big)^{1/4d} + \frac{1}{\big(2^{2d+1}-1\big)^{1/4d}} -2\Bigg)\cdot2^{\,D-2}.
\]
In particular, if $\kappa_d$ is a Salem number of degree $4$ then $p^k \notin E_d$ for every prime $p$ and for every $k$.
\end{proposition}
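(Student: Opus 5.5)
The plan is to go through Corollary~\ref{cor:normbound}, which gives $p\le|N_{K/\Q}(\kappa_d-1)|$ whenever $p^k\in E_d$, and to evaluate this norm exactly using the Salem structure of the conjugates. Write the conjugates as $\kappa_d$, $\kappa_d^{-1}$ and $D-2$ conjugates of modulus one, which come in complex-conjugate pairs $\alpha,\bar\alpha$. Then
\[
|N_{K/\Q}(\kappa_d-1)| = |\kappa_d-1|\,|\kappa_d^{-1}-1|\prod_{|\alpha|=1}|\alpha-1| .
\]
The first two factors combine to $(\kappa_d-1)(1-\kappa_d^{-1})=\kappa_d+\kappa_d^{-1}-2$, since $\kappa_d>1$. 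Each unimodular factor satisfies $|\alpha-1|\le 2$, so the product over the $D-2$ unimodular conjugates is at most $2^{D-2}$.

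The next step bounds $\kappa_d+\kappa_d^{-1}-2$ from above. The function $x\mapsto x+x^{-1}-2$ is increasing on $[1,\infty)$. Theorem~\ref{thm:decreasing} gives $\kappa_d\le(2^{2d+1}-1)^{1/4d}$, and this yields the displayed factor. Combining everything, $p\le(\ldots)\cdot 2^{D-2}$.

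The main obstacle is making the inequality strict. For this I would argue that $|\alpha-1|=2$ forces $\alpha=-1$. However, $-1$ cannot be a conjugate of a Salem number, since the minimal polynomial of a Salem number has degree at least $4$ and so has no rational roots. Hence every unimodular factor is strictly less than $2$, and the product is strictly less than $2^{D-2}$. Because $D\ge4$, the product is nonempty, so strictness genuinely holds.

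For the degree-$4$ case, the bound reads $p<4(\kappa_d+\kappa_d^{-1}-2)$. Since $\kappa_d\le\kappa_2<1.51$ for $d\ge2$, or using $\kappa_d\le\sqrt3$ in general, we get $\kappa_d+\kappa_d^{-1}-2\le\sqrt3+1/\sqrt3-2\approx0.309$. This gives $p<1.24$, which is impossible for a prime.

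One further point needs care. A better route here is to use the exact norm rather than the estimate: $N_{K/\Q}(\kappa_d-1)=f(1)$ for the minimal polynomial $f$, which is a nonzero integer, and $p$ divides it by Corollary~\ref{cor:normbound}. The estimate above already shows $|f(1)|<2$, so $|f(1)|=1$. No prime divides $1$, so no prime power lies in $E_d$.
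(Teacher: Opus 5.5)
Your proposal is correct and follows essentially the same route as the paper. Both use Corollary~\ref{cor:normbound}, the factorization of $N_{K/\Q}(\kappa_d-1)$ over the Salem conjugates (your bound $|\alpha-1|\le2$ is the paper's $2-2\cos\theta_j\le4$), the bound on $\kappa_d$ from Theorem~\ref{thm:decreasing} via monotonicity of $x+x^{-1}-2$, and the check $4(\sqrt3+1/\sqrt3-2)<2$ forcing $|N_{K/\Q}(\kappa_d-1)|=1$ when $D=4$. Your observation that $-1$ cannot be a conjugate, so each $|\alpha-1|<2$, is a genuine improvement, since the paper's argument as written only gives the non-strict $p\le\cdots$ while the statement claims strict inequality.
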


\begin{proof}
Since $\kappa_d$ is assumed to be a Salem number, then we can denote the conjugates of $\kappa_d$ by $\kappa_d, \frac{1}{\kappa_d}, e^{i\theta_1}, e^{-i\theta_1}, \dots, e^{i\theta_{\frac{D-2}{2}}}, e^{-i\theta_{\frac{D-2}{2}}}$. Therefore we have that $N_{K/\mathbb Q}(\kappa_d)=\kappa_d\cdot\tfrac1{\kappa_d}\cdot\prod_je^{i\theta_j}e^{-i\theta_j}=1$. For $\kappa_d-1$ we have that 
\[
N_{K/\mathbb Q}(\kappa_d-1)=(\kappa_d-1)\Big(\tfrac1{\kappa_d}-1\Big)\prod_{j=1}^{\frac{D-2}{2}}
(e^{i\theta_j}-1)(e^{-i\theta_j}-1) = -\frac{(\kappa_d-1)^2}{\kappa_d}
\prod_{j=1}^{\frac{D-2}{2}}(2-2\cos\theta_j),
\]
using $(e^{i\theta}-1)(e^{-i\theta}-1)=|e^{i\theta}-1|^2=2-2\cos\theta\ge0$. Therefore, since $\frac{(\kappa_d-1)^2}{\kappa_d}=\kappa_d-2+\tfrac1{\kappa_d}$, we have that 
\[
|N_{K/\mathbb Q}(\kappa_d-1)| = \left(\kappa_d-2+\tfrac1{\kappa_d}\right)\cdot \prod_{j=1}^{\frac{D-2}{2}}(2-2\cos\theta_j),
\]
Since the function $x \mapsto x-2+\frac{1}{x}$ is increasing on $[\sqrt2,\sqrt3]$, since for every $j$ we have that $2-2\cos\theta_j\le4$, and since from Theorem~\ref{thm:decreasing} we have that $|\kappa_d-1|\le \big(2^{2d+1}-1\big)^{1/4d}-1$, we can conclude that
\[
|N_{K/\mathbb Q}(\kappa_d-1)| \le \left(\kappa_d-2+\tfrac1{\kappa_d}\right)\cdot4^{\frac{D-2}{2}} \leq 
\Bigg( \big(2^{2d+1}-1\big)^{1/4d} + \frac{1}{\big(2^{2d+1}-1\big)^{1/4d}} -2\Bigg)\cdot2^{\,D-2},
\]
and the result follows from Corollary~\ref{cor:normbound}. Finally, if $D=4$ then $\Big(\tfrac{4\sqrt3}3-2\Big)\cdot2^{\,2} <2$, so $|N_{K/\mathbb Q}(\kappa_d-1)|=1$ which is divisible by no prime.
\end{proof}

\bibliographystyle{alpha}
\bibliography{bib}

@article{Arnold2006,
  author  = {Arnold, V. I.},
  title   = {On the matricial version of {F}ermat--{E}uler congruences},
  journal = {Japan. J. Math.},
  volume  = {1},
  number  = {1},
  pages   = {1--24},
  year    = {2006}
}

@incollection{deninger2010p,
  title={p-adic entropy and ap-adic Fuglede--Kadison determinant},
  author={Deninger, Christopher},
  booktitle={Algebra, Arithmetic, and Geometry: Volume I: In Honor of Yu. I. Manin},
  pages={423--442},
  year={2010},
  publisher={Springer}
}

@misc{PrimeGrid,
  title = {PrimeGrid, End of WW Project—30 day notice, official project announcement, 29–30},
  howpublished = {\url{https://www.primegrid.com/forum_thread.php?id=10037&nowrap=true}},
  note = {December 2022}
}

@article{baxter1999planar,
  title={Planar lattice gases with nearest-neighbor exclusion},
  author={Baxter, RJ},
  journal={Annals of Combinatorics},
  volume={3},
  number={2},
  pages={191--203},
  year={1999},
  publisher={Springer}
}

@article{liang2025independent,
  title={Independent Set Enumeration and Estimation of Related Constants of Grid Graphs and Their Variants},
  author={Liang, Kai},
  journal={arXiv preprint arXiv:2507.04007},
  year={2025}
}

@incollection{GeorgiiHaggstromMaes2001,
  author    = {Georgii, Hans-Otto and H{\"a}ggstr{\"o}m, Olle and Maes, Christian},
  title     = {The random geometry of equilibrium phases},
  booktitle = {Phase Transitions and Critical Phenomena},
  editor    = {Domb, C. and Lebowitz, J. L.},
  volume    = {18},
  pages     = {1--142},
  publisher = {Academic Press},
  year      = {2001},
  doi       = {10.1016/S1062-7901(01)80008-2}
}

@article{HuChen1991,
  author  = {Hu, Chin-Kun and Chen, Chi-Ning},
  title   = {Percolation renormalization-group approach to the hard-square model},
  journal = {Physical Review B},
  volume  = {43},
  number  = {7},
  pages   = {6184--6185},
  year    = {1991},
  doi     = {10.1103/PhysRevB.43.6184}
}

@article{LiuEvans2000,
  author  = {Liu, Da-Jiang and Evans, J. W.},
  title   = {Ordering and percolation transitions for hard squares:
             Equilibrium versus nonequilibrium models for adsorbed layers
             with superlattice ordering},
  journal = {Physical Review B},
  volume  = {62},
  number  = {3},
  pages   = {2134--2145},
  year    = {2000},
  doi     = {10.1103/PhysRevB.62.2134}
}

@article{vanDenBergSteif1994,
  author  = {van den Berg, J. and Steif, J. E.},
  title   = {Percolation and the hard-core lattice gas model},
  journal = {Stochastic Processes and their Applications},
  volume  = {49},
  number  = {2},
  pages   = {179--197},
  year    = {1994},
  doi     = {10.1016/0304-4149(94)90132-5}
}

@article{vanDenBergMaes1994,
  author  = {van den Berg, J. and Maes, C.},
  title   = {Disagreement percolation in the study of {Markov} fields},
  journal = {The Annals of Probability},
  volume  = {22},
  number  = {2},
  pages   = {749--763},
  year    = {1994},
  doi     = {10.1214/aop/1176988728}
}

@article{ban2021topological,
  title={On the topological entropy of subshifts of finite type on free semigroups},
  author={Ban, Jung-Chao and Chang, Chih-Hung},
  journal={Taiwanese Journal of Mathematics},
  volume={25},
  number={3},
  pages={579--598},
  year={2021},
  publisher={Mathematical Society of the Republic of China}
}

@article{baxter1980hard,
  title={Hard-square lattice gas},
  author={Baxter, Rodney J and Enting, IG and Tsang, SK},
  journal={Journal of Statistical Physics},
  volume={22},
  number={4},
  pages={465--489},
  year={1980},
  publisher={Springer}
}

@article{byszewski2021dold,
  title={Dold sequences, periodic points, and dynamics},
  author={Byszewski, Jakub and Graff, Grzegorz and Ward, Thomas},
  journal={Bulletin of the London Mathematical Society},
  volume={53},
  number={5},
  pages={1263--1298},
  year={2021},
  publisher={Wiley Online Library}
}

@article{Zarelua2006,
  author  = {Zarelua, A. V.},
  title   = {On matrix analogs of {F}ermat's little theorem},
  journal = {Mat. Zametki},
  volume  = {79},
  number  = {6},
  pages   = {840--855},
  year    = {2006},
  note    = {English translation: Math. Notes \textbf{79} (2006), no.\ 5--6, 783--796}
}

@article{Zarelua2008,
  author  = {Zarelua, A. V.},
  title   = {On congruences for the traces of powers of some matrices},
  journal = {Proc. Steklov Inst. Math.},
  volume  = {263},
  pages   = {78--98},
  year    = {2008}
}

@article{CalkinWilf1998,
  author  = {Calkin, N. and Wilf, H.},
  title   = {The number of independent sets in a grid graph},
  journal = {SIAM J. Discrete Math.},
  volume  = {11},
  number  = {1},
  pages   = {54--60},
  year    = {1998}
}

@article{Dwork1960,
  author  = {Dwork, B.},
  title   = {On the rationality of the zeta function of an algebraic variety},
  journal = {Amer. J. Math.},
  volume  = {82},
  number  = {3},
  pages   = {631--648},
  year    = {1960}
}

@article{Borel1894,
  author  = {Borel, {\'E}.},
  title   = {Sur une application d'un th{\'e}or{\`e}me de {M}.\ {H}adamard},
  journal = {Bull. Sci. Math. (2)},
  volume  = {18},
  pages   = {22--25},
  year    = {1894}
}

@article{HM2010,
  author  = {Hochman, M. and Meyerovitch, T.},
  title   = {A characterization of the entropies of multidimensional shifts of finite type},
  journal = {Ann. of Math.},
  volume  = {171},
  number  = {3},
  pages   = {2011--2038},
  year    = {2010}
}

@article{Lind1984,
  author  = {Lind, D.},
  title   = {The entropies of topological {M}arkov shifts and a related class of algebraic integers},
  journal = {Ergodic Theory Dynam. Systems},
  volume  = {4},
  number  = {2},
  pages   = {283--300},
  year    = {1984}
}

@article{McIntoshRoettger,
  author  = {McIntosh, R. J. and Roettger, E. L.},
  title   = {A search for {F}ibonacci-{W}ieferich and {W}olstenholme primes},
  journal = {Math. Comp.},
  volume  = {76},
  number  = {260},
  pages   = {2087--2094},
  year    = {2007}
}

@book{Neukirch,
  author    = {Neukirch, J.},
  title     = {Algebraic Number Theory},
  publisher = {Springer-Verlag},
  address   = {Berlin},
  year      = {1999}
}

@article{Northcott1949,
  author  = {Northcott, D. G.},
  title   = {An inequality in the theory of arithmetic on algebraic varieties},
  journal = {Proc. Cambridge Philos. Soc.},
  volume  = {45},
  pages   = {502--509, 510--518},
  year    = {1949},
  note    = {}
}

@article{Pavlov2012,
  author  = {Pavlov, R.},
  title   = {Approximating the hard square entropy constant with probabilistic methods},
  journal = {Ann. Probab.},
  volume  = {40},
  number  = {6},
  pages   = {2362--2399},
  year    = {2012}
}

@article{PavlovSchraudner2015,
  author  = {Pavlov, R. and Schraudner, M.},
  title   = {Entropies realizable by block gluing $\mathbb{Z}^d$ shifts of finite type},
  journal = {J. Anal. Math.},
  volume  = {126},
  pages   = {113--174},
  year    = {2015}
}

@book{Serre,
  author    = {Serre, J.-P.},
  title     = {Local Fields},
  series    = {Graduate Texts in Mathematics},
  volume    = {67},
  publisher = {Springer-Verlag},
  address   = {Berlin},
  year      = {1979}
}

@article{Boyd1981,
  author  = {Boyd, D.},
  title   = {Speculations concerning the range of {M}ahler's measure},
  journal = {Canad. Math. Bull.},
  volume  = {24},
  pages   = {453--469},
  year    = {1981}
}

@article{Kahn2001,
  author  = {Kahn, J.},
  title   = {An entropy approach to the hard-core model on bipartite graphs},
  journal = {Combin. Probab. Comput.},
  volume  = {10},
  number  = {3},
  pages   = {219--237},
  year    = {2001}
}

@article{Zhao2010,
  author  = {Zhao, Y.},
  title   = {The number of independent sets in a regular graph},
  journal = {Combin. Probab. Comput.},
  volume  = {19},
  number  = {2},
  pages   = {315--320},
  year    = {2010}
}

@article{Pauling1935,
  author  = {Pauling, L.},
  title   = {The structure and entropy of ice and of other crystals with some randomness of atomic arrangement},
  journal = {J. Amer. Chem. Soc.},
  volume  = {57},
  number  = {12},
  pages   = {2680--2684},
  year    = {1935}
}

@article{Baxter1980,
  author  = {Baxter, R. J.},
  title   = {Hard hexagons: exact solution},
  journal = {J. Phys. A},
  volume  = {13},
  number  = {3},
  pages   = {L61--L70},
  year    = {1980}
}

@book{BaxterBook1982,
  author    = {Baxter, R. J.},
  title     = {Exactly Solved Models in Statistical Mechanics},
  publisher = {Academic Press},
  address   = {London},
  year      = {1982}
}

@inproceedings{Weitz2006,
  author    = {Weitz, D.},
  title     = {Counting independent sets up to the tree threshold},
  booktitle = {Proceedings of the Thirty-Eighth Annual ACM Symposium on Theory of Computing (STOC '06)},
  pages     = {140--149},
  year      = {2006}
}

@inproceedings{Sly2010,
  author    = {Sly, A.},
  title     = {Computational transition at the uniqueness threshold},
  booktitle = {Proceedings of the 2010 IEEE 51st Annual Symposium on Foundations of Computer Science (FOCS)},
  pages     = {287--296},
  year      = {2010}
}

@inproceedings{BrightwellWinkler2002,
  author    = {Brightwell, G. R. and Winkler, P.},
  title     = {Hard constraints and the {B}ethe lattice: adventures at the interface of combinatorics and statistical physics},
  booktitle = {Proceedings of the International Congress of Mathematicians, Vol.\ III},
  pages     = {605--624},
  year      = {2002}
}

@book{HornJohnson,
  author    = {Horn, R. A. and Johnson, C. R.},
  title     = {Matrix Analysis},
  edition   = {2nd},
  publisher = {Cambridge University Press},
  year      = {2013}
}

@book{Gouvea,
  author    = {Gouv{\^e}a, F. Q.},
  title     = {$p$-adic Numbers: An Introduction},
  edition   = {2nd},
  publisher = {Springer-Verlag},
  address   = {Berlin},
  year      = {1997}
}

@book{Marcus,
  author    = {Marcus, D. A.},
  title     = {Number Fields},
  publisher = {Springer-Verlag},
  address   = {New York},
  year      = {1977}
}

@article{galvin2011threshold,
  title={A threshold phenomenon for random independent sets in the discrete hypercube},
  author={Galvin, David},
  journal={Combinatorics, probability and computing},
  volume={20},
  number={1},
  pages={27--51},
  year={2011},
  publisher={Cambridge University Press}
}

@article{ordentlich2004independent,
  title={Independent sets in regular hypergraphs and multidimensional runlength-limited constraints},
  author={Ordentlich, Erik and Roth, Ron M},
  journal={SIAM Journal on Discrete Mathematics},
  volume={17},
  number={4},
  pages={615--623},
  year={2004},
  publisher={SIAM}
}

@article{louidor2013independence,
  title={Independence entropy of $\mathbb{Z}^d$-shift spaces},
  author={Louidor, Erez and Marcus, Brian and Pavlov, Ronnie},
  journal={Acta Applicandae Mathematicae},
  volume={126},
  number={1},
  pages={297--317},
  year={2013},
  publisher={Springer}
}

@article{galvin2019independent,
  title={Independent sets in the discrete hypercube},
  author={Galvin, David},
  journal={arXiv preprint arXiv:1901.01991},
  year={2019}
}

@article{sapozhenko1987number,
  title={On the number of connected subsets with given cardinality of the boundary in bipartite graphs},
  author={Sapozhenko, Aleksandr Antonovich},
  journal={Metody Diskret. Analiz},
  volume={45},
  number={45},
  pages={42--70},
  year={1987}
}

@article{korshunov1983number,
  title={The number of binary codes with distance 2},
  author={Korshunov, Aleksej D and Sapozhenko, Alexander A},
  journal={Problemy Kibernet},
  volume={40},
  number={111-130},
  pages={4},
  year={1983}
}

@article{JenssenPerkins2020,
  author  = {Jenssen, M. and Perkins, W.},
  title   = {Independent sets in the hypercube revisited},
  journal = {J. London Math. Soc.},
  volume  = {102},
  number  = {2},
  pages   = {645--669},
  year    = {2020}
}

@book{BombieriGubler,
  author    = {Bombieri, Enrico and Gubler, Walter},
  title     = {Heights in {D}iophantine Geometry},
  series    = {New Mathematical Monographs},
  volume    = {4},
  publisher = {Cambridge University Press},
  address   = {Cambridge},
  year      = {2006}
}

@book{WaldschmidtDA,
  author    = {Waldschmidt, Michel},
  title     = {Diophantine Approximation on Linear Algebraic Groups: Transcendence Properties of the Exponential Function in Several Variables},
  series    = {Grundlehren der mathematischen Wissenschaften},
  volume    = {326},
  publisher = {Springer},
  address   = {Berlin},
  year      = {2000}
}

\end{document}